\providecommand{\U}[1]{\protect\rule{.1in}{.1in}}
\newtheorem{theorem}{Theorem}
\newtheorem{lemma}[theorem]{Lemma}
\newtheorem{proposition}[theorem]{Proposition}
\newtheorem{remark}[theorem]{Remark}
\newenvironment{proof}[1][Proof]{\textbf{#1.} }{\ \rule{0.5em}{0.5em}}
\begin{document}

\title{\textbf{Equivariant prequantization bundles on the space of connections and
characteristic classes} }
\author{\textsc{Roberto Ferreiro P\'{e}rez}\\Departamento de Econom\'{\i}a Financiera y Contabilidad I\\Facultad de Ciencias Econ\'omicas y Empresariales\\Universidad Complutense de Madrid\\Campus de Somosaguas, 28223-Pozuelo de Alarc\'on, Spain\\\emph{E-mail:} \texttt{roferreiro@ccee.ucm.es}}
\date{}
\maketitle

\begin{abstract}
We show how characteristic classes determine equivariant prequantization
bundles over the space of connections on a principal bundle. These bundles are
shown to generalize the Chern-Simons line bundles to arbitrary dimensions. Our
result applies to arbitrary bundles, and it is studied the action of both the
gauge group and the automorphisms group. The action of the elements in the
connected component of the identity of the group generalizes known results in
the literature. The action of the elements not connected with the identity is
shown to be determined by a characteristic class by using differential
characters and equivariant cohomology. We extend our results to the space of
Riemannian metrics and the actions of diffeomorphisms. In dimension 2, a
$\Gamma_{M}$-equivariant prequantization bundle of the Weil-Petersson
symplectic form on the Teichm\"{u}ller space is obtained, where $\Gamma_{M}$
is the mapping class group of the surface $M$.

\end{abstract}

\bigskip

\noindent\emph{Mathematics Subject Classification 2100:\/} Primary 53C05;
Secondary 53C08, 70S15, 58D27.

\medskip

\noindent\emph{Key words and phrases:\/}Equivariant prequantization bundle,
space of connections, equivariant chacteristic classes, differential
characters, Chern-Simons line bundle.

\medskip

\noindent\emph{Acknowledgments:\/} Supported by \textquotedblleft Proyecto de
investigaci\'{o}n Santander-UCM PR26/16-20305\textquotedblright.

\section{Introduction}

In this paper we study the relationship between characteristic classes and
equivariant prequantization line bundles over the space of connections. We
recall two classical examples of this relation (see Section \ref{notation}%
\ for the notation).

In the first example, let $\Sigma$ be a closed (i.e. compact and without
boundary) oriented surface, $P=\Sigma\times SU(2)$ the trivial principal
$SU(2)$-bundle, and $p\in I_{\mathbb{Z}}^{2}(SU(2))$ the polynomial associated
to the second Chern class. We denote by $\mathcal{A}$ and $\widetilde
{\mathcal{A}}$ the spaces of connections and irreducible connections on $P$.
In \cite{AB1} Atiyah and Bott show that this polynomial determines a
symplectic structure $\sigma$ on the space of connections $\mathcal{A}$ which
is invariant under the action of the group $\mathcal{G}$ of gauge
transformations. Moreover the curvature map determines a moment map $\mu$ for
$\sigma$. By symplectic reduction a symplectic structure $\underline{\sigma}%
$\ on the moduli space of irreducible flat connections $\widetilde
{\mathcal{F}}/\mathcal{G}$ is obtained. Furthermore, in \cite{RSW} it is shown
that the action of $\mathcal{G}$ admits a lift to $\widetilde{\mathcal{A}%
}\times U(1)$ by $U(1)$-bundle automorphisms hence defining a $\mathcal{G}%
$-equivariant $U(1)$-bundle over $\widetilde{\mathcal{A}}$ (or what is
equivalent, a $\mathcal{G}$-equivariant Hermitian line bundle). By taking the
quotient, they obtain an Hermitian line bundle $\mathcal{L}\rightarrow
\widetilde{\mathcal{F}}/\mathcal{G}$ (which is proved to be isomorphic to the
Quillen determinant line bundle) and a natural connection on $\mathcal{L}$
whose curvature is $\underline{\sigma}$. We recall that all this constructions
can be done based only on the polynomial $p$.

The second example is the classical $3$-dimensional Chern-Simons theory. Let
$M$ be a compact $3$-dimensional manifold and $P=M\times SU(2)$ the trivial
principal $SU(2)$-bundle. For simplicity we assume that $\mathcal{G}$\ is a
group that acts freely by gauge transformations on $\mathcal{A}$ and that
$\mathcal{A}\rightarrow\mathcal{A}/\mathcal{G}$ is a principal $\mathcal{G}%
$-bundle. If $M$ is closed then the Chern-Simons action associated to a
polynomial $p\in I_{\mathbb{Z}}^{2}(SU(2))$ determines a $\mathcal{G}%
$-invariant function $\mathcal{A}\rightarrow\mathbb{R}/\mathbb{Z}$ and hence a
function on the quotient $\mathcal{A}/\mathcal{G}\rightarrow\mathbb{R}%
/\mathbb{Z}$. However, when $M$ is a manifold with boundary $\partial M$ the
Chern-Simons action is not a function on $\mathcal{A}/\mathcal{G}$, but it
determines a section of a line bundle $\mathcal{L}_{\partial M}\rightarrow
\mathcal{A}/\mathcal{G}$ called the Chern-Simons line (see e.g. \cite{Freed1}%
). Again all the constructions are based on a polynomial $p$. However, as
pointed out in \cite{DW}, to determine the Chern-Simons action for nontrivial
bundles it is also necessary to choose a universal characteristic class
$\Upsilon\in H^{4}(\mathbf{B}G)$.

We generalize these two examples to arbitrary bundles, groups and dimensions
in the following way. We recall (see \cite{equiconn}) that if $P\rightarrow M$
is a principal $G$-bundle and $\mathcal{A}$ the space of connections on $P$,
the principal $G$-bundle $\mathbb{P}=P\times\mathcal{A}\rightarrow
M\times\mathcal{A}$ admits a canonical (or tautological) connection
$\mathbb{A}$ which is invariant under the action of the group $\mathrm{Aut}P$
of automorphisms of $P$. If a group $\mathcal{G}$ acts on $P\rightarrow M$ by
gauge transformations, then for any invariant polynomial $p\in I_{\mathbb{Z}%
}^{r}(G)$ we can consider the $\mathcal{G}$-equivariant characteristic forms
$p_{\mathcal{G}}^{\mathbb{A}}\in\Omega_{\mathcal{G}}^{2r}(M\times\mathcal{A}%
)$\ of $\mathbb{A}$. If $c$ is a closed oriented $d$-dimensional submanifold
of $M$, by integrating $p_{\mathcal{G}}^{\mathbb{A}}$ over $c$, we obtain
$\int_{c}p_{\mathcal{G}}^{\mathbb{A}}\in\Omega_{\mathcal{G}}^{2r-d}%
(\mathcal{A})$ which is closed for the Cartan differential $D$. When $d=2r-2$,
$\varpi_{c}=\int_{c}p_{\mathcal{G}}^{\mathbb{A}}\in\Omega_{\mathcal{G}}%
^{2}(\mathcal{A})$ is a closed equivariant $2$-form, i.e., $\varpi_{c}%
=\sigma_{c}+\mu_{c}$ where $\sigma_{c}$ is a closed $\mathcal{G}$-invariant
$2$-form and $\mu_{c}$ a co-moment map for $\sigma_{c}$. Our main result is
the following

\begin{theorem}
\label{TheoremAIntro}Let $c$ be a closed submanifold of dimension $2r-2$ of
$M$, $p\in I_{\mathbb{Z}}^{r}(G)$, $\Upsilon\in H^{2r}(\mathbf{B}%
G,\mathbb{Z})$ a characteristic class compatible with $p$ (i.e., they
determine the same real characteristic class) and $A_{0}$ a background
connection\ on $P$. These data determine a lift of the action of $\mathcal{G}$
on $\mathcal{A}$ to an action on $\mathcal{U}_{c}=\mathcal{A}\times
U(1)\rightarrow\mathcal{A}$ by $U(1)$-bundle automorphisms, and a
$\mathcal{G}$-invariant connection form $\Xi_{c}$ such that the $\mathcal{G}%
$-equivariant curvature of $\Xi_{c}$ is\textrm{ }$\varpi_{c}$.
\end{theorem}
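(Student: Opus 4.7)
The plan is to realize $\Xi_c$ as a connection on the trivial bundle $\mathcal{U}_c=\mathcal{A}\times U(1)$ of the form $\Xi_c=d\theta+\pi^{*}\alpha_c$, and then to lift the action of $\mathcal{G}$ by a $U(1)$-valued cocycle whose integrality is controlled by $\Upsilon$ via the Cheeger--Simons differential character formalism.

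The background connection $A_0$ allows one to write down a universal Chern--Simons transgression $\mathcal{TP}(A_0,\mathbb{A})\in\Omega^{2r-1}(M\times\mathcal{A})$ satisfying $d\mathcal{TP}(A_0,\mathbb{A})=p(F_{\mathbb{A}})-\mathrm{pr}_M^{*}p(F_{A_0})$. Setting $\alpha_c:=\int_c\mathcal{TP}(A_0,\mathbb{A})\in\Omega^1(\mathcal{A})$ and applying Stokes (using $\partial c=\emptyset$, together with the fact that $\int_c\mathrm{pr}_M^{*}p(F_{A_0})=0$ for bi-degree reasons since $p(F_{A_0})$ has no leg along $\mathcal{A}$) gives $d\alpha_c=\sigma_c$, so that $\Xi_c$ has ordinary curvature $\sigma_c$.

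To lift the $\mathcal{G}$-action to $\mathcal{U}_c$, I would look for a cocycle $f\colon\mathcal{G}\times\mathcal{A}\to U(1)$, $(\phi,A)\mapsto f_\phi(A)$, satisfying $f_{\phi\psi}(A)=f_\phi(\psi\cdot A)f_\psi(A)$, and set $\widetilde{\phi}(A,z):=(\phi\cdot A,f_\phi(A)z)$. The $\mathcal{G}$-invariance of $\Xi_c$ translates into the relation $d\log f_\phi=\phi^{*}\alpha_c-\alpha_c$, whose right-hand side is closed because $\sigma_c$ is $\mathcal{G}$-invariant. For $\phi$ in the identity component $\mathcal{G}_0$, a path $\phi_t$ from $\mathrm{id}$ to $\phi$ together with integration of the equivariant transgression produces $f_\phi(A)$ explicitly, recovering the classical picture. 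For $\phi$ in a non-identity component this is no longer possible, and here $\Upsilon$ enters: I would define $f_\phi(A)$ as the value of the Cheeger--Simons differential character $\widehat{\Upsilon}$ associated to $(p,\Upsilon)$ on the principal bundle $P_\phi\to M_\phi$ obtained from $P\times[0,1]$ by gluing the two ends via $\phi$, equipped with a connection interpolating between $A$ and $\phi\cdot A$ along a path in $\mathcal{A}$. The compatibility of $\Upsilon$ with $p$ ensures that the resulting element of $\mathbb{R}/\mathbb{Z}$ is independent of the interpolating path modulo $\mathbb{Z}$, and the functoriality of $\widehat{\Upsilon}$ under concatenation of such mapping cylinders delivers the cocycle identity.

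Finally, to compute the $\mathcal{G}$-equivariant curvature of $\Xi_c$, write the invariant lift of a fundamental vector field $X_{\mathcal{A}}$ as $\widetilde{X}=X_{\mathcal{A}}+\lambda(X)\,\partial_\theta$ and impose $\mathcal{L}_{\widetilde{X}}\Xi_c=0$; this fixes $\lambda(X)$ up to a constant, which is in turn pinned down by differentiating the cocycle condition on $f$ at the identity. In the Cartan model one then obtains $D\Xi_c=d\alpha_c+\mu=\sigma_c+\mu_c$, matching $\varpi_c$. The main obstacle throughout is the coherent construction of $f_\phi$ on the disconnected components of $\mathcal{G}$ together with the cocycle property: this is precisely where the integrality of $\Upsilon$ becomes indispensable and where one must upgrade the primitive $\alpha_c$ to a $U(1)$-valued object via differential characters rather than working with differential forms alone.
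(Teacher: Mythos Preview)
Your strategy coincides with the paper's: define the connection as $\Xi_c=\theta-2\pi i\rho_c$ with $\rho_c=\int_c Tp(\mathbb{A},\overline{A}_0)$ (your $\alpha_c$), and construct the lifting cocycle by evaluating the Cheeger--Simons character of the connection on the mapping torus associated to $\phi$. The paper carries this out by first treating discrete groups (each $\phi$ generates a $\mathbb{Z}$-action, giving exactly your mapping cylinder $(P\times N\times\mathbb{R})/\mathbb{Z}$), and then verifies the cocycle identity by passing to the free group $F_2[x_1,x_2]$, which is a clean way to encode your ``functoriality under concatenation''.

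There is, however, one genuine slip in your definition of $f_\phi$. The value of the Cheeger--Simons character on $c\times S^1$ for the interpolating connection is \emph{not} independent of the interpolating path: if $\gamma,\gamma'$ are two paths in $\mathcal{A}$ from $A$ to $\phi\!\cdot\!A$ bounding a disk $D$, the two character values differ by $\int_D\sigma_c$, which need not be an integer. The correct path-independent quantity is
\[
\alpha_\phi(A)\;=\;\int_\gamma\rho_c\;-\;\Big(\textstyle\int_c\chi_{\underline{\mathbb{A}}_\phi}\Big)(\pi_\phi\!\circ\!\vec{\gamma}),
\]
i.e.\ one must subtract the character value from the line integral of the transgression primitive $\rho_c$ along $\gamma$; only this combination is independent of $\gamma$ (this is the paper's Lemma after Theorem~\ref{Prop}). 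With this correction in place, $d\alpha_\phi=\phi^\ast\rho_c-\rho_c$ follows, and differentiating at the identity gives $\left.\tfrac{d}{dt}\right|_{t=0}\alpha_{\phi_t}=\rho_c(X_{\mathcal{A}})+\mu_c(X)$, which is exactly what is needed for the equivariant curvature to equal $\varpi_c$. So your outline is right, but the cocycle must be defined with both terms, not the character value alone.
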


Due to the equivalence between principal $U(1)$-bundles and Hermitian line
bundles, we also obtain a $\mathcal{G}$-equivariant Hermitian line bundle
$\mathcal{L}_{c}\rightarrow\mathcal{A}$ with connection $\nabla^{\Xi_{c}}$.
Our result also generalizes the Chern-Simons line as we prove the following result.

\begin{proposition}
\label{deltauintro}If $c=\partial u$ for some $u\subset M$, then
$S_{u}(A)\!=\!\exp(-2\mathrm{\pi}i\cdot\!%
{\textstyle\int\nolimits_{u}}
Tp(A,A_{0}))$ determines a$\ \mathcal{G}$-invariant section of $\mathcal{U}%
_{c}\rightarrow\mathcal{A}$, or what it is equivalent, a $\mathcal{G}%
$-invariant\ section of unitary norm of $\mathcal{L}_{c}\rightarrow
\mathcal{A}$.
\end{proposition}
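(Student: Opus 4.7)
The plan is to exploit the triviality of $\mathcal{U}_c = \mathcal{A} \times U(1)$: a section is just a function $\mathcal{A} \to U(1)$, and the lifted $\mathcal{G}$-action from Theorem \ref{TheoremAIntro} is encoded by a cocycle $\Theta \colon \mathcal{G} \times \mathcal{A} \to U(1)$ so that a section $s$ is $\mathcal{G}$-invariant iff $s(\phi \cdot A) = \Theta(\phi, A) \cdot s(A)$ for all $\phi \in \mathcal{G}$ and $A \in \mathcal{A}$. Well-definedness of $S_u$ is immediate since $Tp(A, A_0)$ is a smooth $(2r-1)$-form on $M$ depending smoothly on $A$ and $\dim u = 2r-1$, so $\int_u Tp(A, A_0) \in \mathbb{R}$ and $S_u(A) \in U(1)$.

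The heart of the argument is to verify the cocycle identity $S_u(\phi \cdot A) = \Theta(\phi, A) \cdot S_u(A)$. For this I would invoke the classical transgression addition formula
\[
Tp(\phi \cdot A, A_0) - Tp(A, A_0) = Tp(\phi \cdot A, A) + d\eta(A_0, A, \phi \cdot A),
\]
where $\eta$ is an explicit form polynomial in the differences of the three connections. Integrating over $u$ and applying Stokes' theorem with $\partial u = c$ gives
\[
\int_u \bigl[Tp(\phi \cdot A, A_0) - Tp(A, A_0)\bigr] = \int_u Tp(\phi \cdot A, A) + \int_c \eta,
\]
and exponentiating by $-2\pi i$ yields
\[
S_u(\phi \cdot A) = S_u(A) \cdot \exp\!\Bigl(-2\pi i \int_u Tp(\phi \cdot A, A) - 2\pi i \int_c \eta\Bigr).
\]

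It then remains to identify the right-hand exponential with $\Theta(\phi, A)$. The boundary factor $\exp(-2\pi i \int_c \eta)$ is the cocycle piece attached to the $\mathcal{G}$-invariant connection $\Xi_c$ on $\mathcal{U}_c$ whose equivariant curvature is $\varpi_c = \int_c p_{\mathcal{G}}^{\mathbb{A}}$; infinitesimally, differentiating $\eta$ along a one-parameter subgroup recovers the co-moment map $\mu_c$. The volume factor $\exp(-2\pi i \int_u Tp(\phi \cdot A, A))$ is the $\Upsilon$-dependent contribution: by Chern--Weil theory applied to $P$ restricted to $u$ carrying the two connections $A$ and $\phi \cdot A$, the integral $\int_u Tp(\phi \cdot A, A)$ computes, modulo $\mathbb{Z}$, the evaluation of $\Upsilon$ on the closed cycle obtained by gluing two copies of $u$ along $\partial u = c$ via $\phi|_c$, which is exactly the term that governs the action of non-identity components of $\mathcal{G}$ in the construction of $\Theta$.

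The main obstacle is this last matching step: one must verify that the abstract cocycle $\Theta$ produced by Theorem \ref{TheoremAIntro} through equivariant cohomology and differential characters on the tautological bundle $\mathbb{P} \to M \times \mathcal{A}$ coincides, when $c = \partial u$, with the concrete transgression-plus-Stokes expression above. This should reduce to the well-known compatibility between the Chern--Weil-Cartan transgression and integration over a cobounding chain, which is precisely the situation the differential-character formalism used in Theorem \ref{TheoremAIntro} is built to encode.
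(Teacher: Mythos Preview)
Your reduction to the cocycle identity $S_u(\phi\cdot A)=\Theta(\phi,A)\,S_u(A)$ is correct, and the transgression formula you write is fine. The gap is exactly where you say it is: you never identify the expression $\exp\bigl(-2\pi i\int_u Tp(\phi\cdot A,A)-2\pi i\int_c\eta\bigr)$ with the actual cocycle $\alpha_\phi$ produced by the construction. Your heuristic about gluing two copies of $u$ along $c$ via $\phi|_c$ and evaluating $\Upsilon$ is not how $\alpha_\phi$ is defined in the paper (it is defined as $\alpha_\phi(A)=\int_\gamma\rho_c-\xi_c^\phi(\pi_\phi\circ\vec\gamma)$, with $\xi_c^\phi=\int_c\chi_{\underline{\mathbb A}_\phi}$ the integrated Chern--Simons character on a mapping-torus quotient), and nothing you have written connects your two factors to those two terms. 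The sentence ``this should reduce to the well-known compatibility\dots'' is precisely the content of the proposition, so you are assuming what has to be proved.

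The paper's argument is short because it goes in the opposite direction and uses one structural fact you have not invoked: when $c=\partial u$, the differential character $\xi_c=\int_{\partial u}\chi_{\underline{\mathbb A}}$ collapses to $\varsigma\bigl(\int_u p(\underline{\mathbb F})\bigr)$ by equation~(\ref{deltau}), so its value on the loop $\pi\circ\vec\gamma$ is just $\int_\gamma\sigma_u$ with $\sigma_u=\int_u p(\mathbb F)$. Combined with the identity $\rho_c=\sigma_u+ds_u$ (which follows from Stokes on $\int_u dTp(\mathbb A,\overline{A}_0)$, i.e.\ equation~(\ref{backgr1})), the defining formula for $\alpha_\phi$ becomes
\[
\alpha_\phi(A)=\int_\gamma(\sigma_u+ds_u)-\int_\gamma\sigma_u=\int_\gamma ds_u=s_u(\phi A)-s_u(A),
\]
and you are done. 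No matching against a separately computed transgression expression is needed; the character formalism does the bookkeeping for you. If you want to complete your route, you would have to prove exactly this collapse of $\xi_c$ by hand, at which point you are redoing the paper's proof inside a longer argument.
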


Thus $p$, $\Upsilon,$ $c$ and $A_{0}$ determine a $\mathcal{G}$%
-equivariant\ prequantization bundle for $(\mathcal{A},\varpi_{c})$. If we
change the background connection $A_{0}$ we obtain a different connection, and
a different action, but we prove that there exists a canonical $\mathcal{G}%
$-equivariant isomorphisms between them. Therefore we can consider that
different background connections $A_{0}$ determine different global
trivializations of the same prequantization bundle, and hence that it only
depends on $p,\Upsilon$ and $c$.

We prove that for any $X\in\mathrm{Lie}\mathcal{G}$, its lift $X_{\mathcal{U}%
_{c}}\in\mathfrak{X}(\mathcal{U}_{c})$ does not depend on $\Upsilon$. Hence
neither does the action of the connected component of the identity
$\mathcal{G}_{0}$. Nevertheless the action of the elements of $\mathcal{G}$
which are not connected with the identity depends on $\Upsilon$. For certain
groups there is a bijection $I_{\mathbb{Z}}^{r}(G)\simeq H^{2r}(\mathbf{B}%
G,\mathbb{Z})$ and in these cases the action is determined only by $c$, $p$
and $A_{0}$. This happens for example in the case $G=U(n)$ as $H^{\bullet
}(\mathbf{B}U(n),\mathbb{Z})\simeq\mathbb{Z}[c_{1},\ldots,c_{n}]$ where
$c_{1},\ldots,c_{n}$ are the Chern classes (e.g. see \cite[Chapter 23.7]%
{May}). But for a general group $G$ the cohomology $H^{2r}(\mathbf{B}%
G,\mathbb{Z})$ may contain torsion elements, and $\Upsilon$ is not determined
by $p$. In that cases non-equivalent actions can exist with the same $c$, $p$
and $A_{0}$ if $\mathcal{G}$ is not connected.

We study the dependence on $c$. It can be better understood in terms on the
Hermitian line bundle $\mathcal{L}_{c}\rightarrow\mathcal{A}$. If $-c$ denotes
the submanifold $c$ with the opposed orientation, then we have $\mathcal{L}%
_{-c}=\mathcal{L}_{c}^{\ast}$, and if$\ c^{\prime}$ is another closed oriented
submanifold then $\mathcal{L}_{c+c^{\prime}}\simeq\mathcal{L}_{c}%
\otimes\mathcal{L}_{c^{\prime}}$. In particular, if $\partial u=c-c^{\prime}$
by Proposition \ref{deltauintro} $S_{u}$ determines a section of unitary norm
on $\mathcal{L}_{c-c^{\prime}}=\mathcal{L}_{c}\otimes\mathcal{L}_{c^{\prime}%
}^{\ast}\simeq\mathrm{Hom}(\mathcal{L}_{c^{\prime}},\mathcal{L}_{c})$ which is
an isomorphism.

In Section \ref{Irreducible}\ the restriction of the prequantization bundle
$\mathcal{U}_{c}$ to the space of irreducible connections $\widetilde
{\mathcal{U}}_{c}=\widetilde{\mathcal{A}}\times U(1)$ is studied. We have a
well defined quotient manifold $\widetilde{\mathcal{A}}/\mathcal{G}$, and for
the trivial $SU(2)$-bundle over a Riemann surface we have also a well defined
quotient $U(1)$-bundle $\widetilde{\mathcal{U}}_{c}/\mathcal{G}\rightarrow
\widetilde{\mathcal{A}}/\mathcal{G}$. If $p$ is the second Chern polynomial
and $c=M,$ then $\sigma_{c}$ and $\mu_{c}$ coincide with the Atiyah-Bott
symplectic structure and moment map (see \cite{equiconn}). The connection
$\Xi_{c}$ does not project onto a connection on $\widetilde{\mathcal{U}}%
_{c}/\mathcal{G}$ as $\iota_{X_{\mathcal{U}_{c}}}\Xi_{c}=-\mu_{c}(X)$ for
$X\in\mathrm{Lie}\mathcal{G}$. However, if $\widetilde{\mathcal{F}}$ is the
space of irreduclible flat conections we have $\widetilde{\mathcal{F}}%
\subset\mu_{c}^{-1}(0)$, and the restriction of $\Xi_{c}$ to $\widetilde
{\mathcal{F}}\times U(1)$ is $\mathcal{G}$-basic and projects onto a
connection $\underline{\Xi}_{c}$ on $(\widetilde{\mathcal{F}}\times
U(1))/\mathcal{G}\rightarrow\widetilde{\mathcal{F}}/\mathcal{G}$. Furthermore
the curvature of $\underline{\Xi}_{c}$ is the form $\underline{\sigma}_{c}$
obtained by symplectic reduction of $(\mathcal{A},\sigma_{c},\mu_{c})$. Hence
our result generalizes that of \cite{RSW}. Furthermore, we also show that for
other groups and bundles, the prequantization bundle of $\widetilde
{\mathcal{A}}/\mathcal{G}$ is not determined by the characteristic classes of
$G$, but by those of the group $\widetilde{G}=G/Z(G)$, where $Z(G)$ is the
center of $G$.

The symmetry group usually\ considered in physical theories is the group of
gauge transformations. However sometimes it is necessary to consider the lift
of the action of the automorphism group $\mathrm{Aut}P$ to $\mathcal{U}_{c}$
(see for example \cite{Andersen1,Andersen} and references therein). We show
that Theorem \ref{TheoremAIntro} is also valid when $\mathcal{G}$ acts on $P$
by automorphisms preserving the orientation of $M$\ in the following cases:

- $M$ is a closed oriented manifold of dimension $d=2r-2$ and $c=M$.

- $M$ is a compact oriented manifold of dimension $d=2r-1$ with boundary
$\partial M$ and $c=\partial M$. In this case Proposition \ref{deltauintro} is
also valid.

Finally we apply our results to the space $\mathfrak{Met}M$ of Riemannian
metrics and the action of the orientation preserving diffeomorphisms
$\mathrm{Diff}^{+}M$. For closed manifolds of dimension $4r-2$ the integer
combinations of Pontryagin classes of degree $r$ determine $\mathrm{Diff}%
^{+}M$-equivariant prequantization bundles of the presymplectic structures
defined in \cite{WP}. In particular, for a surface,\ the first Pontryagin
class is shown to determine a canonical holomorphic prequantization bundle for
the Teichm\"{u}ller space endowed with the Weil-Petersson symplectic form.
This bundle is shown to be equivariant with respect of the action of the
mapping class group of the surface. Furthermore, for compact manifolds of
dimension $4r-1$ with boundary, we obtain Chern-Simons line bundles for
Riemannian metrics.

Let us explain the way in which Theorem \ref{TheoremAIntro} is obtained. For
simplicity we assume that $\mathcal{G}$ is the group of gauge transformations
that fixes a point $p_{0}\in P$. As it is well known $\mathcal{G}$ acts freely
on $\mathcal{A}$ and we have well defined quotient manifolds. In \cite{AS}
Chern-Weil theory is applied to the principal $G$-bundle $(P\times
\mathcal{A})/\mathcal{G}\rightarrow M\times\mathcal{A}/\mathcal{G}$. Moreover,
if $p\in I_{\mathbb{Z}}^{r}(G)$, the Chern-Simons construction can also be
applied to this bundle. This is done in \cite{flat} by using the
Cheeger-Simons approach of \cite{Cheeger} based on differential characters.
The space of differential characters of order $k$ on $M$ is denoted by
$\hat{H}^{k}(M)$. If $\mathfrak{A}$ is a connection on the principal
$\mathcal{G}$-bundle $\mathcal{A}\rightarrow\mathcal{A}/\mathcal{G}$, it
determines a connection $\underline{\mathfrak{A}}$ on $(P\times\mathcal{A}%
)/\mathcal{G}\rightarrow M\times\mathcal{A}/\mathcal{G}$ (see \cite{flat} for
details). Hence, if $\Upsilon\in H^{2r}(\mathbf{B}G)$ is a universal
characteristic class compatible with $p$, there exists a differential
character (the Chern-Simons differential character) $\chi_{\underline
{\mathfrak{A}}}\in\hat{H}^{2r}(M\times\mathcal{A}/\mathcal{G})$ whose
curvature is $p(F_{\underline{\mathfrak{A}}})$. By integration over a
submandifold $c$ a differential character $\int_{c}\chi_{\underline
{\mathfrak{A}}}\in\hat{H}^{2r-d}(\mathcal{A}/\mathcal{G})$ is obtained, where
$d=\dim C$. In \cite{flat}, by appliying our results of Section \ref{2orden},
the characters of order $2$ are interpreted geometrically as the holonomy of a
connection on a $U(1)$-bundle $\underline{\mathcal{U}}_{c}\rightarrow
\mathcal{A}/\mathcal{G}$. We generalize this construction to non-free actions,
to the action of automorphisms and also to the space of Riemannian\ metrics.

It is possible to extend the construction of \cite{flat} to non-free actions
by using equivariant cohomology. We do not follow this approach because it
requires the use of connections and quotients on principal $\mathcal{G}%
$-bundles for infinite dimensional groups. This can be technically dificult,
especially if we want to apply it to groups of automorphisms and
diffeomorphisms that should be considered as Fr\'{e}chet Lie groups. To avoid
this problem, we give\ a direct definition of the lift of the action of each
element $\phi\in\mathcal{G}$. We show that it can be done using only the
action of discrete groups and it does not require the use of quotients and
auxiliary connections for infinite dimensional groups. We also prove that the
bundle constructed in \cite{flat} coincides with our bundle.

\section{Notations and conventions\label{notation}}

In this paper we consider two Lie groups. The group $G$ is the structure group
of a principal bundle $P\rightarrow M\,$and it is supposed to be finite
dimensional and with a finite number of connected components (in order to
apply Chern-Simons construction). The second group $\mathcal{G}$ is a symmetry
group (usually infinite dimensional), and $\mathcal{G}_{0}$ denotes the
connected component of the identity on $\mathcal{G}$.

We denote by $I_{\mathbb{Z}}^{r}(G)$ the set of $G$-invariant polynomials on
its Lie algebra $\mathfrak{g}$ whose characteristic classes have integral
periods. We denote by $\mathbf{E}G\rightarrow\mathbf{B}G$ a universal
principal $G$-bundle. A polynomial $p\in I_{\mathbb{Z}}^{r}(G)$ and a
characteristic class $\Upsilon\in H^{2r}(\mathbf{B}G,\mathbb{Z})$ are said to
be compatible if they determine the same real characteristic class. We denote
by $\mathcal{I}_{\mathbb{Z}}^{r}=\{(p,\Upsilon)\in I_{\mathbb{Z}}^{r}(G)\times
H^{2r}(\mathbf{B}G,\mathbb{Z})\colon$ $p,\Upsilon$ are compatible$\}$, and by
$\Upsilon_{P}$ the characteristic class of $P\rightarrow M$ associated to
$\Upsilon$.

The Maurer Cartan form of $U(1)$ is denoted by $\theta=u^{-1}du$, and
$\partial_{\theta}\in\mathfrak{X}(U(1))$ is the vector field such that
$\theta(\partial_{\theta})=1$. If $\pi\colon\mathcal{U}\rightarrow N$ is a
principal $U(1)$ bundle and $\Xi\in\Omega^{1}(\mathcal{U},i\mathbb{R)}$ is a
connection then the curvature form$\ \mathrm{curv}(\Xi)\in\Omega^{2}(N)$ is
defined by the property $\pi^{\ast}(\mathrm{curv}(\Xi))=\frac{i}{2\pi}d\Xi$.
The log-holonomy $\log\mathrm{hol}_{\Xi}(\gamma)\in\mathbb{R}/\mathbb{Z}$ of
$\Xi$ on a closed curve $\gamma\colon I\rightarrow N$ with $\gamma
(0)=\gamma(1)$ is determined by the relation $\bar{\gamma}(1)=\bar{\gamma
}(0)\cdot\exp(2\pi i\log\mathrm{hol}_{\Xi}(\gamma))$, where $\bar{\gamma
}\colon I\rightarrow\mathcal{U}$ is the $\Xi$-horizontal lift of $\gamma$. The
(real) first Chern class of $\mathcal{U}$ is the cohomology class of
$\mathrm{curv}(\Xi)$. We denote by $I$ the interval $[0,1]$.

\section{Cheeger-Simons differential characters}

We recall the definition of differential characters (see \cite{Cheeger} and
\cite{BB}\ for details). We denote by $C_{k}(N)$ and $Z_{k}(N)$ the smooth
chains and cycles on $N$. A Cheeger-Simons differential character of order $k$
is a homomorphism $\chi\colon Z_{k-1}(N)\rightarrow\mathbb{R}/\mathbb{Z}$ such
that there exist $\alpha\in\Omega^{k}(N)$ with satisfies $\chi(\partial u)=%
{\textstyle\int\nolimits_{u}}
\alpha$ for every $u\in C_{k}(N)$. We say that $\chi$ is a differential
character with curvature $\mathrm{curv}(\chi)=\alpha$ and it can be proved
that $d\mathrm{curv}(\chi)=0$. We recall (e.g. see \cite{BB}) that $\chi
(u)\ $is invariant under reparametrizations, i.e. if $u\colon U\rightarrow N$,
and $\varphi$ is an orientation preserving diffeomorphism of $U$, then
$\chi(u\circ\varphi)=\chi(u)$. We denote the space of differential characters
of order $k$ on $N$ by $\hat{H}^{k}(N)$. We have a map $\mathrm{char}%
\colon\hat{H}^{k}(N)\rightarrow H^{k}(N,\mathbb{Z})$, and the class
$\mathrm{char}(\chi)$ is called the characteristic class of $\chi$. The maps
$\mathrm{char}$ and $\mathrm{curv}$ are compatible in the sense that we have
$r(\mathrm{char}(\chi))=[\mathrm{curv}(\chi)]\in H^{k}(N,\mathbb{R})$. If
$f\colon N^{\prime}\rightarrow N$ is a smooth map, it induces a map $f^{\ast
}\colon\hat{H}^{k}(N)\rightarrow\hat{H}^{k}(N^{\prime})$ defined by $f^{\ast
}\chi(u)=\chi(f\circ u)$. Given $\beta\in\Omega^{k-1}(N)$ we define a
differential character $\varsigma(\beta)\in\hat{H}^{k}(N)$ by setting
$\varsigma(\beta)(s)=\int_{s}\beta$ for $s\in Z_{k-1}(M)$. We have
$\mathrm{curv}(\varsigma(\beta))=d\beta$, and $\mathrm{char}(\varsigma
(\beta))=0$. Note that $\varsigma(d\alpha)(s)=\int_{s}d\alpha=\int_{\partial
s}\alpha=0$ as $\partial s=0$.

\subsection{Differential characters of order 2\label{2orden}}

First we recall that if $\mathcal{U}\rightarrow M$ a principal $U(1)$-bundle
with connection $\Theta$, and curvature $\omega\in\Omega^{2}(M)$, the
log-holonomy of $\Theta$ $\log\mathrm{hol}_{\Theta}\colon Z_{1}(M)\rightarrow
\mathbb{R}/\mathbb{Z}$ is a differential character with curvature
$\mathrm{curv}(\log\mathrm{hol}_{\Theta})=\omega$ and $\mathrm{char}%
(\log\mathrm{hol}_{\Theta})=c_{1}(\mathcal{U})$. Conversely, by a classical
result in differential cohomology,\ every second order\ differential character
can be represented\ as the holonomy of a connection $\Theta$ on a principal
$U(1)$ bundle $\mathcal{U}\rightarrow M$. The bundle $\mathcal{U}$ and the
connection $\Theta$ are determined by $\chi$ only modulo isomorphisms. In the
next Proposition we show that in the following restrictive equivariant case it
is possible to give a concrete bundle and connection

\begin{theorem}
\label{Prop}Let $N$ be a connected manifold with $H_{1}(N,\mathbb{Z})=0$ in
which $\mathcal{G}$ acts in such a way that $\pi\colon N\rightarrow
N/\mathcal{G}$ is a principal $\mathcal{G}$-bundle. Let $\chi\in\hat{H}%
^{2}(N/\mathcal{G})$ be a second order differential character on
$N/\mathcal{G}$ with curvature $\omega$, and assume that there
exists\ $\lambda\in\Omega^{1}(N)$ such that $\pi^{\ast}\omega=d\lambda$. Then
there exists a unique lift of the action of $\mathcal{G}$ to $N\times U(1)$ by
$U(1)$-bundle automorphism such that $\Theta=\theta-2\pi i\lambda\in\Omega
^{1}(N\times U(1),i\mathbb{R})$ is projectable onto a connection
$\underline{\Theta}$ on $\mathcal{U}=(N\times U(1))/\mathcal{G}\rightarrow
N/\mathcal{G}$ and $\chi=\log\mathrm{hol}_{\underline{\Theta}}$. The action of
$\phi\in\mathcal{G}$ on $N\times U(1)\mathbb{\ }$is given by $\Phi_{\phi
}\colon N\times U(1)\rightarrow N\times U(1),$ $\Phi_{\phi}(x,u)=(\phi
x,\exp(2\mathrm{\pi}i\alpha_{\phi}(x))\cdot u)$, where $\alpha_{\phi}\colon
N\rightarrow\mathbb{R}/\mathbb{Z}$ is defined by $\alpha_{\phi}(x)=\int
\nolimits_{\gamma}\lambda-\chi(\pi\circ\gamma)$, and $\gamma$ is any curve on
$N$ joining $x$ and $\phi x$.
\end{theorem}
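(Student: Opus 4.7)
The plan is to first check well-definedness of $\alpha_\phi(x)\in\mathbb{R}/\mathbb{Z}$, then verify the cocycle identity that makes $\Phi$ an honest action, then derive the identity $d\alpha_\phi=\phi^{\ast}\lambda-\lambda$ from which $\mathcal{G}$-invariance of $\Theta$ (and hence projectability) follows, and finally compute the holonomy of $\underline{\Theta}$ to identify it with $\chi$.

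For well-definedness, two curves $\gamma,\gamma'$ from $x$ to $\phi x$ differ by a $1$-cycle in $N$, which bounds a $2$-chain $u$ by the hypothesis $H_1(N,\mathbb{Z})=0$. Then $\int_{\gamma-\gamma'}\lambda=\int_{u}d\lambda=\int_{u}\pi^{\ast}\omega=\int_{\pi\circ u}\omega$, while $\chi(\pi\circ(\gamma-\gamma'))=\chi(\partial(\pi\circ u))=\int_{\pi\circ u}\omega\ \mathrm{mod}\ \mathbb{Z}$ by the defining property of a differential character, so the difference vanishes in $\mathbb{R}/\mathbb{Z}$. The cocycle identity $\alpha_{\phi_1\phi_2}(x)=\alpha_{\phi_2}(x)+\alpha_{\phi_1}(\phi_2 x)$ follows by concatenating a curve from $x$ to $\phi_2 x$ with one from $\phi_2 x$ to $\phi_1\phi_2 x$, using that each of their projections is a cycle in $N/\mathcal{G}$ and invoking additivity of $\int\lambda$ and of $\chi$; this is exactly what makes $\Phi_{\phi_1}\circ\Phi_{\phi_2}=\Phi_{\phi_1\phi_2}$.

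The key analytic step is the computation of $d\alpha_\phi$. Deform the defining curve $\gamma_x$ by prepending a small segment $\eta_1$ from $x+tv$ to $x$ and appending $\eta_2(s)=\phi(x+stv)$ from $\phi x$ to $\phi(x+tv)$; the resulting concatenation joins $x+tv$ to $\phi(x+tv)$ and so computes $\alpha_\phi(x+tv)$. Because $\pi\circ\phi=\pi$, the chains $\pi\circ\eta_1$ and $\pi\circ\eta_2$ trace the same path in $N/\mathcal{G}$ with opposite orientations and cancel exactly as smooth chains, so $\chi(\pi\circ\gamma_{x+tv})=\chi(\pi\circ\gamma_x)$ identically in $t$. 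Meanwhile $\tfrac{d}{dt}\big|_{t=0}\int_{\eta_1+\gamma_x+\eta_2}\lambda=(\phi^{\ast}\lambda)_x(v)-\lambda_x(v)$, yielding $d\alpha_\phi=\phi^{\ast}\lambda-\lambda$. A short Maurer-Cartan calculation, using that $U(1)$ is abelian, then gives $\Phi_\phi^{\ast}\theta=\theta+2\pi i\,d\alpha_\phi$, so $\Phi_\phi^{\ast}\Theta=\theta+2\pi i(\phi^{\ast}\lambda-\lambda)-2\pi i\phi^{\ast}\lambda=\Theta$, and $\Theta$ descends to a connection $\underline{\Theta}$ on $\mathcal{U}$.

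Finally, the $\Theta$-horizontal lift in $N\times U(1)$ of a path $\gamma$ in $N$ starting at $(x,1)$ satisfies $u^{-1}\dot u=2\pi i\,\lambda(\dot\gamma)$, hence ends at $(\gamma(1),\exp(2\pi i\int_{\gamma}\lambda))$. For a loop $\bar\gamma$ in $N/\mathcal{G}$, lift it to $\gamma$ from $x$ to $\phi x$; projecting the endpoint via $\Phi_{\phi^{-1}}$ yields $[x,\exp(2\pi i(\int_{\gamma}\lambda-\alpha_\phi(x)))]=[x,\exp(2\pi i\chi(\bar\gamma))]$ by the very definition of $\alpha_\phi$, so $\log\mathrm{hol}_{\underline{\Theta}}(\bar\gamma)=\chi(\bar\gamma)$. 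For uniqueness, any lift has the form $(x,u)\mapsto(\phi x,\exp(2\pi i\beta_\phi(x))u)$; projectability of $\Theta$ forces $d\beta_\phi=\phi^{\ast}\lambda-\lambda$, and matching $\log\mathrm{hol}_{\underline{\Theta}}=\chi$ fixes the integration constant so that $\beta_\phi=\alpha_\phi$. I expect the main obstacle to be the infinitesimal step for $d\alpha_\phi$: it is the chain-level cancellation $\pi\circ\eta_1+\pi\circ\eta_2=0$, and not merely a cancellation mod $\mathbb{Z}$ via the curvature, that allows one to bypass a separate smoothness argument for $\alpha_\phi$ and to identify its differential cleanly.
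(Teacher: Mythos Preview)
Your approach mirrors the paper's almost exactly: well-definedness via $H_1(N,\mathbb{Z})=0$, the cocycle identity by concatenation, the computation $d\alpha_\phi=\phi^{\ast}\lambda-\lambda$ via prepending/appending paths whose projections cancel as chains, invariance of $\Theta$, the holonomy computation, and the uniqueness argument are all organized and argued just as in the paper.

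There is, however, one genuine gap. You pass from $\Phi_\phi^{\ast}\Theta=\Theta$ directly to ``$\Theta$ descends to a connection $\underline{\Theta}$ on $\mathcal{U}$''. Invariance alone is not enough when $\mathcal{G}$ is not discrete: for $\Theta$ to be $\mathcal{G}$-basic you also need horizontality, i.e.\ $\iota_{X_{N\times U(1)}}\Theta=0$ for every $X\in\mathrm{Lie}\,\mathcal{G}$. The paper supplies this via a separate computation (its Lemma~\ref{cambiopunto}(c)): taking a one-parameter subgroup $\phi_t$ with $\dot\phi_0=X$ and the curve $\gamma(t)=\phi_t x$, one has $\pi\circ\gamma=0$ in $Z_1(N/\mathcal{G})$, so $\alpha_{\phi_t}(x)=\int_0^t\lambda_{\phi_s x}(\dot\gamma(s))\,ds$ and hence $\left.\tfrac{d}{dt}\right|_{t=0}\alpha_{\phi_t}(x)=\lambda(X_N)(x)$. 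This gives $X_{N\times U(1)}=X_N+2\pi i\,\lambda(X_N)\,\partial_\theta$, whence $\Theta(X_{N\times U(1)})=0$. Only then is $\Theta$ basic and projectable. Your variation-in-$x$ argument for $d\alpha_\phi$ does not cover this variation-in-$\phi$ step; it is an independent (though easy) calculation that you should include.
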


The proof of Theorem \ref{Prop}\ is a consequence of the following lemmas. If
$\gamma$ is a curve on $N$, we denote by $\underline{\gamma}=\pi\circ\gamma$
the projected curve on $N/\mathcal{G}$.

\begin{lemma}
Let $\gamma$ and $\gamma^{\prime}$ be two curves on $N$ joining $x$ and $\phi
x$. Then $%
{\textstyle\int\nolimits_{\gamma}}
\lambda-\chi(\underline{\gamma})=%
{\textstyle\int\nolimits_{\gamma^{\prime}}}
\lambda-\chi(\underline{\gamma}^{\prime})$.
\end{lemma}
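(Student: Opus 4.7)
The plan is to exploit the hypothesis $H_1(N,\mathbb{Z})=0$ in order to bound the cycle $\gamma-\gamma'$ and then compare the two sides using Stokes' theorem on one side and the defining property of a differential character on the other.

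First I would observe that since $\gamma$ and $\gamma'$ both run from $x$ to $\phi x$, the formal difference $\gamma-\gamma'$ is a closed $1$-cycle on $N$. By the hypothesis $H_{1}(N,\mathbb{Z})=0$, there exists a smooth $2$-chain $u\in C_{2}(N)$ with $\partial u=\gamma-\gamma'$. Projecting down by $\pi$, we obtain a $2$-chain $\pi\circ u$ on $N/\mathcal{G}$ whose boundary is $\underline{\gamma}-\underline{\gamma}'$.

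Next I would compute the $\lambda$-side by Stokes, using the crucial identity $d\lambda=\pi^{\ast}\omega$:
\[
\int_{\gamma}\lambda-\int_{\gamma'}\lambda=\int_{\partial u}\lambda=\int_{u}d\lambda=\int_{u}\pi^{\ast}\omega=\int_{\pi\circ u}\omega.
\]
For the $\chi$-side I would use the defining property of a differential character, $\chi(\partial v)=\int_{v}\mathrm{curv}(\chi)$, applied with $v=\pi\circ u$ and $\mathrm{curv}(\chi)=\omega$:
\[
\chi(\underline{\gamma})-\chi(\underline{\gamma}')=\chi\bigl(\partial(\pi\circ u)\bigr)=\int_{\pi\circ u}\omega\pmod{\mathbb{Z}}.
\]

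Subtracting the two identities, the $\omega$-integrals cancel modulo $\mathbb{Z}$, and we conclude that $\int_{\gamma}\lambda-\chi(\underline{\gamma})=\int_{\gamma'}\lambda-\chi(\underline{\gamma}')$ in $\mathbb{R}/\mathbb{Z}$, which is exactly the equality asserted (note that the expression $\alpha_{\phi}(x)$ is defined only modulo $\mathbb{Z}$, so this is the right notion of equality). There is no real obstacle here, since both $H_{1}(N,\mathbb{Z})=0$ and the compatibility $d\lambda=\pi^{\ast}\omega$ are given in the hypotheses; the only mild subtlety to be careful about is that the $\lambda$-integrals are honest real numbers while $\chi$ takes values in $\mathbb{R}/\mathbb{Z}$, so the equality at the end must be read modulo $\mathbb{Z}$.
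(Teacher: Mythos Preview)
Your proof is correct and follows essentially the same route as the paper: use $H_{1}(N,\mathbb{Z})=0$ to write $\gamma-\gamma'=\partial u$, project by $\pi$, and compare $\int_{u}d\lambda=\int_{\pi\circ u}\omega$ with $\chi(\partial(\pi\circ u))=\int_{\pi\circ u}\omega$. Your explicit remark that the final equality lives in $\mathbb{R}/\mathbb{Z}$ is a welcome clarification.
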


\begin{proof}
As $H_{1}(N,\mathbb{Z})=0$ we have $\gamma-\gamma^{\prime}=\partial D$ on $N$.
If $\underline{D}=\pi\circ D$, then $\underline{\gamma}-\underline{\gamma
}^{\prime}=\partial\underline{D}$, and hence $\chi(\underline{\gamma}%
)-\chi(\underline{\gamma}^{\prime})=\chi(\underline{\gamma}-\underline{\gamma
}^{\prime})=\int\nolimits_{\underline{D}}\omega=\int\nolimits_{D}d\lambda
=\int\nolimits_{\gamma}\lambda-\int\nolimits_{\gamma^{\prime}}\lambda$.
\end{proof}

For every $\phi\in\mathcal{G}$ we define $\alpha_{\phi}\colon N\rightarrow
\mathbb{R}/\mathbb{Z}$ by $\alpha_{\phi}(x)=\int\nolimits_{\gamma}\lambda
-\chi(\underline{\gamma})$, where $\gamma$ is a curve on $N$ joining $x$ and
$\phi x$ (it is well defined by the preceding Lemma).

\begin{lemma}
\label{cambiopunto}a) We have $\alpha_{\phi}(x^{\prime})=\alpha_{\phi}%
(x)+\int\nolimits_{\gamma_{xx^{\prime}}}(\phi^{\ast}\lambda-\lambda)$ for any
curve $\gamma_{xx^{\prime}}$ joining $x$ and $x^{\prime}$.

b) As a consequence of a) we have $d\alpha_{\phi}=\phi^{\ast}\lambda-\lambda$.

c) If $\phi_{t}$ is a $1$-parameter group on $\mathcal{G}$ with $\dot{\phi}%
=X$, then $\left.  \tfrac{d\alpha_{\phi_{t}}(x)}{dt}\right\vert _{t=0}%
=\lambda(X_{N})(x)$.
\end{lemma}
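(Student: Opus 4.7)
The plan is to derive (a) from additivity of path integrals and the fact that $\chi$ factors through $\pi$ via $\mathcal{G}$-invariance, then deduce (b) by viewing the right-hand side of (a) as a primitive, and obtain (c) by choosing a canonical curve along the one-parameter group and differentiating.

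For (a), I would fix the reference curve $\gamma$ from $x$ to $\phi x$ (used to compute $\alpha_\phi(x)$) and construct an explicit curve from $x'$ to $\phi x'$ by the concatenation
\[
\gamma' \;=\; (-\gamma_{xx'}) \,+\, \gamma \,+\, (\phi\circ\gamma_{xx'}).
\]
Additivity of line integrals gives
\[
\int_{\gamma'}\lambda = -\int_{\gamma_{xx'}}\lambda + \int_{\gamma}\lambda + \int_{\gamma_{xx'}}\phi^*\lambda.
\]
Since $\pi\circ\phi=\pi$, the projection satisfies $\underline{\phi\circ\gamma_{xx'}}=\underline{\gamma_{xx'}}$, so $\underline{\gamma'}=\underline{\gamma}$ as $1$-chains (the $\pm\underline{\gamma_{xx'}}$ cancel). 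Hence $\chi(\underline{\gamma'})=\chi(\underline{\gamma})$. Subtracting yields the claim. The small technical check here is that $\gamma'$ is indeed a valid curve from $x'$ to $\phi x'$ (endpoints match) and that the cycle $\underline{\gamma'}$ coincides with $\underline{\gamma}$ on the nose in $Z_1$, not merely homologically, so the homomorphism property of $\chi$ is not needed here.

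For (b), statement (a) shows that the function $x'\mapsto\alpha_\phi(x')-\alpha_\phi(x)$ is a primitive of $\phi^*\lambda-\lambda$ along any smooth curve through $x$. Evaluating on a curve $\gamma_{xx'}$ with prescribed initial tangent vector $v\in T_xN$ and differentiating at the starting point gives $d\alpha_\phi(v)=(\phi^*\lambda-\lambda)(v)$ pointwise, hence the identity of $1$-forms. (Here $\alpha_\phi$ is $\mathbb{R}/\mathbb{Z}$-valued but its differential is well defined as a real $1$-form through local lifts.)

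For (c), take the canonical curve $\gamma_t(s)=\phi_{st}(x)$, $s\in[0,1]$, which joins $x$ to $\phi_t x$. Since $\pi$ is $\mathcal{G}$-invariant, $\underline{\gamma_t}(s)=\pi(x)$ is the constant loop at $\pi(x)$; this is the boundary of a constant $2$-simplex, so $\chi(\underline{\gamma_t})=\int\omega=0$. Thus $\alpha_{\phi_t}(x)=\int_{\gamma_t}\lambda$. Writing $\dot\gamma_t(s)=t\,X_N(\phi_{st}x)$, one gets $\alpha_{\phi_t}(x)=t\int_0^1\lambda_{\phi_{st}x}(X_N(\phi_{st}x))\,ds$, and differentiating at $t=0$ yields $\lambda(X_N)(x)$. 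The mildly subtle point, which is the only real obstacle, is justifying that $\chi$ vanishes on the constant loop; this follows from the defining property $\chi(\partial u)=\int_u\mathrm{curv}(\chi)$ applied to a constant $2$-chain, so no deeper invariance of $\chi$ is invoked.
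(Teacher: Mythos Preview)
Your proof is correct and follows essentially the same approach as the paper: the same concatenated curve $\gamma'=(-\gamma_{xx'})\ast\gamma\ast(\phi\circ\gamma_{xx'})$ in (a), the same differentiation of the integral formula in (b), and the same orbit curve $s\mapsto\phi_{st}x$ in (c). Your justification in (c) that $\chi$ vanishes on the constant loop (via $\chi(\partial u)=\int_u\omega$ for a constant $2$-chain) is in fact slightly more explicit than the paper's, which simply asserts $\underline{\gamma}=0$ in $Z_1(N/\mathcal{G})$.
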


\begin{proof}
a) If $\gamma$ is a curve joining $x$ and $\phi x$, $\gamma_{x^{\prime}x}$ a
curve joining $x^{\prime}$ and $x$, and $\gamma_{xx^{\prime}}$ the inverse
curve. Then $\gamma^{\prime}=\gamma_{x^{\prime}x}\ast\gamma\ast\phi
\gamma_{xx^{\prime}}$ is a curve joining $x^{\prime}$ and $\phi x^{\prime}$.
Clearly $\underline{\gamma}^{\prime}=\underline{\gamma}$ on $Z_{1}%
(N/\mathcal{G})$, and hence $\chi(\underline{\gamma})=\chi(\underline{\gamma
}^{\prime})$. We have
\begin{align*}
\alpha_{\phi}(x^{\prime})  &  =%
{\textstyle\int\nolimits_{\gamma^{\prime}}}
\lambda-\chi(\underline{\gamma}^{\prime})=%
{\textstyle\int\nolimits_{\gamma_{x^{\prime}x}}}
\lambda+%
{\textstyle\int\nolimits_{\gamma}}
\lambda+%
{\textstyle\int\nolimits_{\phi\gamma_{xx^{\prime}}}}
\lambda-\chi(\underline{\gamma})\\
&  =\alpha_{\phi}(x)+%
{\textstyle\int\nolimits_{\gamma_{xx^{\prime}}}}
\phi^{\ast}\lambda-%
{\textstyle\int\nolimits_{\gamma_{xx^{\prime}}}}
\lambda=\alpha_{\phi}(x)+%
{\textstyle\int\nolimits_{\gamma_{xx^{\prime}}}}
(\phi^{\ast}\lambda-\lambda)
\end{align*}

b) If $\gamma$ is a curve with $\gamma(0)=x$ and $\gamma^{\prime}(0)=X\in
T_{x}N$, we have $\alpha_{\phi}(\gamma(s))=\alpha_{\phi}(x)+%
{\textstyle\int\nolimits_{0}^{s}}
(\phi^{\ast}\lambda-\lambda)_{\gamma(s)}(\gamma^{\prime}(s))ds$, and hence
$d\alpha_{\phi}(x)(X)=\left.  \frac{d\alpha_{\phi}(x_{s})}{ds}\right\vert
_{s=0}=(\phi^{\ast}\lambda-\lambda)_{x}(X)$.

c) Let $X\in\mathrm{Lie}\mathcal{G}$ and $\phi_{t}$ a $1$-parameter group on
$\mathcal{G}$ with $\phi_{0}=\mathrm{1}_{\mathcal{G}}$ and $\dot{\phi}_{0}=X$,
and we define $\gamma(t)=\phi_{t}x.$ We have $\underline{\gamma}=0$ on
$Z_{1}(N/\mathcal{G})$ and $\dot{\gamma}(0)=X_{N}(x)$. Then $\alpha_{\phi_{t}%
}(x)=%
{\textstyle\int\nolimits_{\gamma}}
\lambda-\chi(\underline{\gamma})=%
{\textstyle\int\nolimits_{\gamma}}
\lambda=%
{\textstyle\int\nolimits_{0}^{t}}
\lambda_{\phi_{s}x}(\dot{\gamma}(s))ds$ and $\left.  \tfrac{d\alpha_{\phi_{t}%
}(x)}{dt}\right\vert _{t=0}=\lambda(X_{N}(x)).$
\end{proof}

The action $\alpha$ satisfies the following cocycle condition

\begin{lemma}
\label{cociclo}We have $\alpha_{\phi_{2}\phi_{1}}(x)=\alpha_{\phi_{1}%
}(x)+\alpha_{\phi_{2}}(\phi_{1}x)$ for any $x\in N$, $\phi_{1},\phi_{2}%
\in\mathcal{G}$.
\end{lemma}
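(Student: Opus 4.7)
The plan is to exploit the freedom, already established by the first lemma, to compute $\alpha_\phi$ via any convenient curve. The natural choice is to build a curve from $x$ to $\phi_2\phi_1 x$ by concatenation.

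First I would pick a curve $\gamma_1$ on $N$ joining $x$ and $\phi_1 x$, and a curve $\gamma_2$ joining $\phi_1 x$ and $\phi_2(\phi_1 x)$. Their concatenation $\gamma = \gamma_1 \ast \gamma_2$ is then a curve joining $x$ and $(\phi_2\phi_1)x$, which is an admissible choice for computing $\alpha_{\phi_2\phi_1}(x)$ by the previous lemma. Integration of a $1$-form is additive over concatenation, so $\int_\gamma \lambda = \int_{\gamma_1}\lambda + \int_{\gamma_2}\lambda$.

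For the differential-character term, I would note that the projected curve $\underline{\gamma} = \pi\circ\gamma$ is the concatenation $\underline{\gamma_1} \ast \underline{\gamma_2}$ in $N/\mathcal{G}$, which agrees up to reparametrization with the sum $\underline{\gamma_1} + \underline{\gamma_2}$ as a $1$-cycle (since both $\underline{\gamma_1}$ and $\underline{\gamma_2}$ are themselves cycles modulo their endpoints, and $\underline{\gamma}$ is a cycle in $Z_1(N/\mathcal{G})$ only after one remembers that in the definition of $\alpha_\phi$ the character is evaluated on the $1$-chain, with reparametrization invariance of $\chi$ absorbing the difference). Using that $\chi$ is a homomorphism $Z_1(N/\mathcal{G})\to\mathbb{R}/\mathbb{Z}$, we obtain $\chi(\underline{\gamma}) = \chi(\underline{\gamma_1}) + \chi(\underline{\gamma_2})$.

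Combining the two additivities gives
\[
\alpha_{\phi_2\phi_1}(x) = \int_\gamma \lambda - \chi(\underline{\gamma}) = \Bigl(\int_{\gamma_1}\lambda - \chi(\underline{\gamma_1})\Bigr) + \Bigl(\int_{\gamma_2}\lambda - \chi(\underline{\gamma_2})\Bigr) = \alpha_{\phi_1}(x) + \alpha_{\phi_2}(\phi_1 x),
\]
where in the last equality we recognize $\gamma_2$ as a curve from $\phi_1 x$ to $\phi_2(\phi_1 x)$. The only point requiring a moment's care — and hence the main (minor) obstacle — is making precise that the character $\chi$, which is defined on cycles, is additive along the concatenation $\underline{\gamma}=\underline{\gamma_1}\ast\underline{\gamma_2}$; this is handled by the reparametrization invariance of $\chi$ noted in Section \ref{2orden} together with the fact that concatenation and sum of $1$-chains differ only by a reparametrization when the endpoints match.
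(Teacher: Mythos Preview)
Your proof is correct and follows essentially the same approach as the paper's: choose curves $\gamma_1$ from $x$ to $\phi_1 x$ and $\gamma_2$ from $\phi_1 x$ to $\phi_2\phi_1 x$, concatenate them, and use additivity of both $\int\lambda$ and $\chi$ on the projected cycles. The only difference is that the paper states directly that $\underline{\gamma}=\underline{\gamma_1}+\underline{\gamma_2}$ in $Z_1(N/\mathcal{G})$ (each $\underline{\gamma_i}$ is already a loop since $\pi(x)=\pi(\phi_i x)$), whereas your parenthetical about ``cycles modulo their endpoints'' slightly overcomplicates this point.
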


\begin{proof}
Let $\gamma_{1}$ be a curve joining $x$ and $\phi_{1}x$ and $\gamma_{2}$ be a
curve joining $\phi_{1}x$ and $\phi_{2}\phi_{1}x$. Then $\gamma^{\prime
}=\gamma_{2}\ast\gamma_{1}$ is a curve joining $x$ and $\phi_{2}\phi_{1}x$. We
have $\underline{\gamma}^{\prime}=\underline{\gamma}_{2}+\underline{\gamma
}_{1}$ on $Z_{1}(N/\mathcal{G})$. Hence%
\[
\alpha_{\phi_{2}\phi_{1}}(x)=%
{\textstyle\int\nolimits_{\gamma^{\prime}}}
\lambda-\chi(\underline{\gamma}^{\prime})=%
{\textstyle\int\nolimits_{\gamma_{2}}}
\lambda+%
{\textstyle\int\nolimits_{\gamma_{1}}}
\lambda-\chi(\underline{\gamma}_{1})-\chi(\underline{\gamma}_{2})=\alpha
_{\phi_{1}}(x)+\alpha_{\phi_{2}}(\phi_{1}x).
\]

\end{proof}

We define the action of $\mathcal{G}$ on $N\times U(1)$ by $\Phi_{\phi
}(x,u)=(\phi x,\exp(2\pi i\alpha_{\phi}(x))\cdot u)$. It defines a group
action as we have%
\begin{align*}
\Phi_{\phi_{2}}(\Phi_{\phi_{1}}(x,u))  &  =\Phi_{\phi_{2}}((\phi_{1}%
x,\exp(2\pi i\alpha_{\phi_{1}}(x))\cdot u))\\
&  =(\phi_{2}\phi_{1}x,\exp(2\pi i(\alpha_{\phi_{1}}(x)+\alpha_{\phi_{2}}%
(\phi_{1}x))\cdot u)\\
&  =(\phi_{2}\phi_{1}x,\exp(2\pi i\alpha_{\phi_{2}\phi_{1}}(x))\cdot
u)=(\Phi_{\phi_{2}\phi_{1}})(x,u))
\end{align*}

We also define $\Theta=\theta-2\pi i\lambda\in\Omega^{1}(N\times
U(1),i\mathbb{R})$, $\mathcal{U}=(N\times U(1))/\mathcal{G}$ and we denote by
$\overline{\pi}\colon N\times U(1)\rightarrow\mathcal{U}$ the projection. For
every $\phi\in\mathcal{G}$ we have
\[
\Phi^{\ast}\Theta=\Phi^{\ast}\theta-2\pi i\phi^{\ast}\lambda=(\theta+2\pi
id\alpha_{\phi})-2\pi i\phi^{\ast}\lambda=(\theta+2\pi i(\phi^{\ast}%
\lambda-\lambda))-2\pi i\phi^{\ast}\lambda=\Theta.
\]
Moreover, for every $X\in\mathrm{Lie}\mathcal{G}$, if $\phi_{t}$ is a curve on
$\mathcal{G}$ with $X=\dot{\phi}$ then we have $X_{N\times U(1)}=X_{N}+2\pi
i\left.  \tfrac{d\alpha_{\phi_{t}}(x)}{dt}\right\vert _{t=0}\partial_{\theta
}=X_{N}+2\pi i\lambda(X_{N})\partial_{\theta}$, and hence $\Theta(X_{N\times
U(1)})=0$. We conclude that $\Theta$ is a $\mathcal{G}$-basic form, i.e. there
exists $\underline{\Theta}\in\Omega^{1}(N/\mathcal{G},i\mathbb{R})$ such that
$\overline{\pi}^{\ast}\underline{\Theta}=\Theta$. Clearly $\Theta$ is a
connection form.

\begin{lemma}
We have $\log\mathrm{hol}_{\underline{\Theta}}=\chi$.
\end{lemma}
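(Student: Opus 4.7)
The plan is to compute the log-holonomy of $\underline{\Theta}$ on an arbitrary $1$-cycle $c \in Z_{1}(N/\mathcal{G})$ and verify it agrees with $\chi(c)$. Since both $\log\mathrm{hol}_{\underline{\Theta}}$ and $\chi$ are additive on cycles, it suffices to treat a single closed curve $c\colon I \to N/\mathcal{G}$.

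First I would lift $c$ to a (generally non-closed) path $\gamma\colon I \to N$ using the principal $\mathcal{G}$-bundle structure $\pi\colon N \to N/\mathcal{G}$. Closedness of $c$ forces $\gamma(1) = \phi\,\gamma(0)$ for some $\phi \in \mathcal{G}$. Next I would produce the horizontal lift of $c$ in $\mathcal{U}$ by lifting $\gamma$ to $N \times U(1)$: since $\overline{\pi}^{\ast}\underline{\Theta} = \Theta = \theta - 2\pi i \lambda$, horizontality of a curve $(\gamma(t), u(t))$ is the ODE $u^{-1}\dot{u} = 2\pi i \lambda(\dot{\gamma})$, whose solution starting from $u(0)=u_{0}$ satisfies $u(1) = u_{0}\exp(2\pi i \int_{\gamma}\lambda)$.

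The final step is to re-express the endpoint $\overline{\pi}(\phi x_{0}, u(1))$ in the fiber of $\mathcal{U}$ over $c(0)$. Using the defining $\mathcal{G}$-action $\Phi_{\phi}(x_{0}, v) = (\phi x_{0}, \exp(2\pi i \alpha_{\phi}(x_{0}))\cdot v)$, this endpoint coincides with $\overline{\pi}(x_{0}, u_{0}\exp(2\pi i(\int_{\gamma}\lambda - \alpha_{\phi}(x_{0}))))$. By the very definition $\alpha_{\phi}(x_{0}) = \int_{\gamma}\lambda - \chi(\underline{\gamma})$, the $U(1)$-factor collapses to $u_{0}\exp(2\pi i\,\chi(c))$, which reads off as $\log\mathrm{hol}_{\underline{\Theta}}(c) = \chi(c) \bmod \mathbb{Z}$.

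The main potential obstacle is not any hard computation but keeping track of sign and normalization conventions (the roles of $\theta$, $\lambda$, and the factor $2\pi$), together with checking independence of auxiliary choices: independence of the lift $\gamma$ is built into the preceding lemma that $\alpha_{\phi}$ is well defined, independence of the starting $u_{0}$ follows from the equivariance of right $U(1)$-multiplication with the $\mathcal{G}$-action, and the reduction of a general cycle to connected closed loops uses additivity of $\chi$ and of log-holonomy on $Z_{1}$. Once these formalities are dispensed with, the proof is essentially a one-line verification that substitutes the definition of $\alpha_{\phi}$ into the endpoint of the horizontal lift.
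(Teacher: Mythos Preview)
Your proposal is correct and follows essentially the same approach as the paper's own proof: lift the loop to a path $\gamma$ in $N$ ending at $\phi x$, write down the $\Theta$-horizontal lift in $N\times U(1)$ explicitly, and then use the $\mathcal{G}$-identification $(\phi x,u(1))\sim(x,\exp(-2\pi i\alpha_{\phi}(x))\cdot u(1))$ together with the definition of $\alpha_{\phi}$ to read off $\chi(\underline{\gamma})$. The paper's argument is essentially your ``one-line verification'' without the surrounding discussion of independence of choices and reduction to connected loops.
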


\begin{proof}
Given a loop $\underline{\gamma}$ on $N/\mathcal{G}$ with $\underline{\gamma
}(0)=\underline{\gamma}(1)=[x]\in N/\mathcal{G}$, we can find a curve $\gamma$
on $N$ with $\gamma(0)=x$ such that $\pi\circ\gamma=\underline{\gamma}$. We
have $\gamma(1)=\phi x$ for some $\phi\in\mathcal{G}$. The $\Theta$-horizontal
lift of $\gamma$ to $N\times U(1)$ starting at the point $(x,0)$ is given by
$\overline{\gamma}(s)=(\gamma(s),\exp(2\pi i%
{\textstyle\int\nolimits_{0}^{s}}
\lambda_{\gamma(t)}(\dot{\gamma}(t))dt))$. The curve $\overline{\pi}%
\circ\overline{\gamma}$ is a $\underline{\Theta}$-horizontal lift to
$\mathcal{U}$ of the loop $\underline{\gamma}=\pi\circ\gamma$. In particular
we have $\overline{\pi}\circ\overline{\gamma}(1)=(\phi x,\exp(2\pi i%
{\textstyle\int\nolimits_{\gamma}}
\lambda))\sim_{\mathcal{G}}(x,\exp(2\pi i(%
{\textstyle\int\nolimits_{\gamma}}
\lambda-\alpha_{\phi}(x)))$. Hence $\log\mathrm{hol}_{\underline{\Theta}%
}(\underline{\gamma})=%
{\textstyle\int\nolimits_{\gamma}}
\lambda-\alpha_{\phi}(x)=\chi(\underline{\gamma})$.
\end{proof}

The proof of the preceding Proposition also shows that the action we have
defined is the unique with satisfies $\log\mathrm{hol}_{\underline{\Theta}%
}(\underline{\gamma})=\chi(\underline{\gamma})$. This is equivalent to
$\chi(\underline{\gamma})=%
{\textstyle\int\nolimits_{\gamma}}
\lambda-\alpha_{\phi}(x)$, and hence $\alpha_{\phi}(x)=%
{\textstyle\int\nolimits_{\gamma}}
\lambda-\chi(\underline{\gamma})$, that is our definition of $\alpha_{\phi
}(x)$.

For the elements in $\mathcal{G}_{0}$ (the connected component of the identity
in $\mathcal{G}$) we have a simpler result:

\begin{proposition}
\label{Connected}Let $\phi\in\mathcal{G}_{0}$ and $\varphi\subset\mathcal{G}$
be a curve such that $\varphi_{0}=1_{\mathcal{G}}$ and $\varphi_{1}=\phi$.
Then $\alpha_{\phi}(x)=\int\nolimits_{\varphi\cdot x}\lambda.$
\end{proposition}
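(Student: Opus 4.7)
The plan is direct: apply the definition $\alpha_\phi(x) = \int_\gamma \lambda - \chi(\underline{\gamma})$ with a curve $\gamma$ adapted to the hypothesis that $\phi$ lies in the identity component. Specifically, I would take $\gamma(t) = \varphi_t \cdot x$ for $t \in I$, the orbit of $x$ traced out by the path $\varphi$. Since $\varphi_0 = 1_\mathcal{G}$ and $\varphi_1 = \phi$, this curve joins $x$ to $\phi x$ and is a legitimate choice by the well-definedness established in the first lemma following Theorem \ref{Prop}.

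The key observation, already exploited in the proof of Lemma \ref{cambiopunto}(c), is that $\gamma$ stays inside a single $\mathcal{G}$-orbit, so its projection $\underline{\gamma} = \pi \circ \gamma$ is constant at $[x] \in N/\mathcal{G}$ and thus vanishes as a $1$-cycle on $N/\mathcal{G}$. By the reparametrization invariance of differential characters recalled in Section \ref{notation} (with constant cycles as the extreme case of degenerate ones), one has $\chi(\underline{\gamma}) = 0$, and therefore
\[
\alpha_\phi(x) = \int_\gamma \lambda - 0 = \int_{\varphi \cdot x} \lambda,
\]
which is the claimed identity in $\mathbb{R}/\mathbb{Z}$.

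The main obstacle is essentially non-existent, since all of the substantive work was already carried out in defining $\alpha_\phi$ and proving its independence from the joining curve; the argument reduces to a judicious choice of representative. The only mildly delicate point is the vanishing of $\chi$ on the constant loop $\underline{\gamma}$, which is precisely the fact tacitly used in Lemma \ref{cambiopunto}(c), so no new input is required. An alternative route is available — differentiate the cocycle identity of Lemma \ref{cociclo} in $t$, use Lemma \ref{cambiopunto}(c) to identify $\tfrac{d}{dt}\alpha_{\varphi_t}(x)$ with $\lambda(\dot\gamma(t))$, and integrate from $0$ to $1$ — but this detour is strictly more work than the direct approach above.
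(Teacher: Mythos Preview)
Your proof is correct and follows the same argument as the paper: choose $\gamma(t)=\varphi_t\cdot x$, observe that its projection $\underline{\gamma}$ is constant and hence vanishes as a $1$-cycle on $N/\mathcal{G}$, so $\chi(\underline{\gamma})=0$ and $\alpha_\phi(x)=\int_\gamma\lambda$. The paper's proof is essentially these two lines; your additional remarks about the alternative differentiation route are accurate but, as you note, unnecessary.
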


\begin{proof}
The curve $\gamma=\varphi\cdot x$ is a curve joining $x$ and $\phi x$, and
$\underline{\gamma}=0$ on $Z_{1}(N/\mathcal{G}).$ Hence $\alpha_{\phi}%
(x)=\int\nolimits_{\gamma}\lambda-\chi(\underline{\gamma})=\int
\nolimits_{\varphi x}\lambda$.
\end{proof}

\begin{remark}
\emph{The preceding Proposition determines the action of }$\mathcal{G}_{0}%
$\emph{ only in terms of }$\lambda$\emph{, and without any reference to }%
$\chi$\emph{. Hence the differential character }$\chi$\emph{ is necessary only
to determine the action of the elements of }$\mathcal{G}$\emph{ not connected
with the identity. We note that Theorem \ref{Prop} is a generalization to non
connected groups of results in \cite{BCRS2}, \cite{Dupont} and \cite{Salamon}
for the space of connections.}
\end{remark}

\subsection{Chern-Simons differential characters\label{ChS}}

Chern-Simons theory allows to find, in a natural way, a differential character
with curvature a characteristic form. Let $G$ be a Lie group with a finite
number of connected components, $p\in I_{\mathbb{Z}}^{r}(G)$ and $q\colon
P\rightarrow N$ a principal $G$-bundle over a manifold $N$. If $A$ is a
connection on $q\colon P\rightarrow N$ with curvature $F$, we have
$p(F)\in\Omega^{k}(N)$. It can be seen (see \cite{Cheeger}) that if
$\Upsilon\in H^{2r}(\mathbf{B}G,\mathbb{Z})$\ is a universal characteristic
class compatible with $p$, there exist a differential character $\chi_{A}%
\in\hat{H}^{2r}(N)$ such that $\mathrm{curv}(\chi_{A})=p(F)$ and
$\mathrm{char}(\chi_{A})=\Upsilon_{P}$. We call $\chi_{A}$ the Chern-Simons
character of $p,\Upsilon$ and $A$. The Chern-Simons character is characterized
as being the unique natural map $(P,A)\mapsto\chi_{A}$ satisfying
$\mathrm{curv}(\chi_{A})=p(F)$ and $\mathrm{char}(\chi_{A})=\Upsilon_{P}$. We
recall that natural means that for any principal $G$-bundle $P^{\prime
}\rightarrow N^{\prime}$ and any $G$-bundle map $F\colon P^{\prime}\rightarrow
P$ we have $\chi_{F^{\ast}A}=f^{\ast}(\chi_{A})$, where $f\colon N^{\prime
}\rightarrow N$ is the map induced by $F$. Furthermore, if $A^{\prime}$ is
another connection on $P$, then we have
\begin{equation}
\chi_{A^{\prime}}=\chi_{A}+\varsigma(Tp(A^{\prime},A)). \label{AAprima}%
\end{equation}

A consequence of the preceding equation is the following (see e.g.
\cite[Proposition 2.9]{Cheeger})

\begin{lemma}
\label{LieCS}If $A_{t}$ is a smooth $1$-parametric family of connections on
$P$ with $\dot{A}_{0}=a\in\Omega^{1}(M,\mathrm{ad}P)$, then $\left.  \frac
{d}{dt}\right\vert _{t=0}\chi_{A_{t}}(u)=r\int_{u}p(a,F_{0},\overset
{(r-1)}{\ldots},F_{0})$ for every $u\in Z_{2r-1}(N)$.
\end{lemma}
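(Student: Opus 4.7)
The approach is to exploit the transgression identity (\ref{AAprima}), which compares $\chi_{A'}$ to $\chi_{A}$ via the classical Chern--Simons form $Tp(A',A)$. Setting $A'=A_{t}$ and $A=A_{0}$ and evaluating on the cycle $u$ gives
\begin{equation*}
\chi_{A_{t}}(u)-\chi_{A_{0}}(u)\equiv \int_{u}Tp(A_{t},A_{0})\pmod{\mathbb{Z}}.
\end{equation*}
Because $Tp(A_{0},A_{0})=0$, the right-hand side --- viewed in $\mathbb{R}$ --- furnishes a canonical smooth lift of this mod-$\mathbb{Z}$ difference on a neighbourhood of $t=0$. That observation alone lets me define $\left.\tfrac{d}{dt}\right|_{t=0}\chi_{A_{t}}(u)$ and identify it with $\int_{u}\left.\tfrac{d}{dt}\right|_{t=0}Tp(A_{t},A_{0})$.

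The remaining work is to compute the pointwise derivative of the Chern--Simons form at $t=0$. I will use the standard explicit representation
\begin{equation*}
Tp(A',A)=r\int_{0}^{1}p(A'-A,F_{s},\ldots,F_{s})\,ds,
\end{equation*}
where $F_{s}$ is the curvature of the straight-line interpolation $A+s(A'-A)$. Specializing to $A'=A_{t}$, $A=A_{0}$, the integrand is $p(A_{t}-A_{0},F_{s}^{t},\ldots,F_{s}^{t})$ with $F_{s}^{t}$ the curvature of $A_{0}+s(A_{t}-A_{0})$. At $t=0$ the first slot $A_{t}-A_{0}$ vanishes and $F_{s}^{t}=F_{0}$ for every $s$. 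Applying the Leibniz rule to the symmetric multilinear form $p$ produces $r$ terms; the $r-1$ terms in which a curvature slot is differentiated carry the factor $(A_{t}-A_{0})|_{t=0}=0$ and die, so only the term differentiating the first slot survives, yielding
\begin{equation*}
\left.\tfrac{d}{dt}\right|_{t=0}Tp(A_{t},A_{0})=r\int_{0}^{1}p(a,F_{0},\ldots,F_{0})\,ds=r\,p(a,F_{0},\ldots,F_{0}).
\end{equation*}
Integrating over $u$ gives the claimed formula.

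The only point requiring care is the bookkeeping around the $\mathbb{R}/\mathbb{Z}$-valued derivative: I need to know that it is legitimate to move the $t$-derivative past the integral $\int_{u}$ and past the projection $\mathbb{R}\to\mathbb{R}/\mathbb{Z}$. Both reductions are justified by the smooth dependence of $Tp(A_{t},A_{0})$ on $t$ as a differential form on $N$, together with the vanishing at $t=0$ provided by (\ref{AAprima}). I expect this bit of analytic hygiene to be the only real obstacle; the computation itself is a single, clean application of the product rule to the invariant polynomial $p$.
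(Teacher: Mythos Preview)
Your proof is correct and follows exactly the route the paper indicates: the paper does not spell out a proof but simply states the lemma as a consequence of equation~(\ref{AAprima}) (citing \cite[Proposition~2.9]{Cheeger}), and your argument is precisely the differentiation of that transgression identity together with the standard integral formula for $Tp(A_{t},A_{0})$. Your care about lifting the $\mathbb{R}/\mathbb{Z}$-valued difference to $\mathbb{R}$ near $t=0$ via the vanishing $Tp(A_{0},A_{0})=0$ is appropriate and makes the derivative well defined.
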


\bigskip

\begin{remark}
\emph{The original Chern-Simons and Cheeger-Simons constructions are valid for
finite dimensional manifolds, but they can be extended to Banach or
Fr\'{e}chet infinite dimensional manifolds, and to more general types of
spaces (see for example \cite{BB}). Hence they can be applied to the infinite
dimensional spaces of connections and metrics.}
\end{remark}

\subsection{Fiber integration of differential characters}

\subsubsection{Integration on a product}

If $\alpha\in\Omega^{k}(C\times S)$ with $C$ compact and $\dim C=d$ we define
$\int\nolimits_{C}\alpha\in\Omega^{k-d}(S)$ by $\left(  \int\nolimits_{C}%
\alpha\right)  _{s}(X_{1},\ldots,X_{k-d})=\int\nolimits_{C}\iota_{X_{k-d}%
}\cdots\iota_{X_{1}}\alpha_{s}$ for $s\in S$, $X_{1},\ldots,X_{d}\in T_{s}S$.
If $k<d$ we define $\int\nolimits_{C}\alpha=0$. We have $\int\nolimits_{S}%
\int\nolimits_{C}\alpha=\int\nolimits_{C\times S}\alpha$, and also
$\int\nolimits_{C}\alpha=\int\nolimits_{C}\alpha^{d,k-d}$, where
$\alpha^{d,k-d}$ is the component relative to the bigraduation associated to
the product structure on $C\times S$. Furthermore we have Stokes theorem
$d\int\nolimits_{C}\alpha=\int\nolimits_{C}d\alpha-(-1)^{k-d}\int
\nolimits_{\partial C}\alpha$. If $c\colon C\rightarrow M$ is a map with $\dim
C=d$ we define maps $%
{\textstyle\int\nolimits_{c}}
\colon\Omega^{k}(M\times N)\rightarrow\Omega^{k-d}(N)$ by $%
{\textstyle\int\nolimits_{c}}
\alpha=%
{\textstyle\int\nolimits_{C}}
(c\times\mathrm{id}_{N})^{\ast}\alpha$ and we have $%
{\textstyle\int\nolimits_{c}}
\alpha=%
{\textstyle\int\nolimits_{c}}
\alpha^{d,k-d}$.

The integration map can be extended to differential characters in the
following way. If $\chi\in\hat{H}^{n}(M\times N)$ is a differential character
of order $n$ on $M\times N$ and $c\colon C\rightarrow M\ $is a smooth map with
$C$ closed, we define $\int_{c}\chi\in\hat{H}^{n-d}(N)$ by $(\int_{c}%
\chi)(s)=\chi(c\times s)$, and we have $\int_{c}\chi(\partial t)=\chi
(c\times\partial t)=%
{\textstyle\int\nolimits_{c\times t}}
\mathrm{curv}(\chi)=%
{\textstyle\int\nolimits_{t}}
{\textstyle\int\nolimits_{c}}
\mathrm{curv}(\chi)$. Hence $\int_{c}\chi$ is a differential character on
$N\mathcal{\ }$and its curvature is $\mathrm{curv}(\int_{c}\chi)=%
{\textstyle\int\nolimits_{c}}
\mathrm{curv}(\chi)$. Moreover, if $c=\partial u$ for some $u\colon
U\rightarrow M$ we have $%
{\textstyle\int\nolimits_{\partial u}}
\chi=\varsigma(%
{\textstyle\int\nolimits_{u}}
\mathrm{curv}(\chi))$ as $%
{\textstyle\int_{\partial u}}
\chi(s)=\chi(\partial u\times s)=\chi(\partial(u\times s))=%
{\textstyle\int\nolimits_{u\times s}}
\mathrm{curv}(\chi)=%
{\textstyle\int\nolimits_{s}}
{\textstyle\int\nolimits_{u}}
\mathrm{curv}(\chi).$

\subsubsection{Fiber integration}

The integration of differential characters can be extended to fiber
integration on a nontrivial bundle $\mathcal{N}\rightarrow N$ with fibre $M$
(e.g. see \cite{BB,Freed2}).\ In the product case we can integrate over any
submanifold of $M$, but for nontrivial bundles it only makes sense integration
over the fiber $M$ and over $\partial M$ if the fiber has boundary. If
$\mathcal{N}\rightarrow N$ is a fiber bundle with compact and oriented fibre
$M$ without boundary of dimension $d$, the fiber integration is a map
$\int_{M}\colon\hat{H}^{n}(\mathcal{N})\rightarrow\hat{H}^{n-d}(N)$ and
satisfies $\mathrm{curv}(%
{\textstyle\int\nolimits_{M}}
\chi)=%
{\textstyle\int\nolimits_{M}}
\mathrm{curv}(\chi)$, and $\mathrm{char}(%
{\textstyle\int\nolimits_{M}}
\chi)=%
{\textstyle\int\nolimits_{M}}
\mathrm{char}(\chi)$. If $M$ has boundary we have a map $\int_{\partial
M}\colon\hat{H}^{n}(\mathcal{N})\rightarrow\hat{H}^{n-d+1}(N)$ and
\begin{equation}%
{\textstyle\int\nolimits_{\partial M}}
\chi=\varsigma(%
{\textstyle\int\nolimits_{M}}
\mathrm{curv}(\chi)). \label{deltau}%
\end{equation}

Fiber integration satisfies the following naturality property (e.g. see
\cite{BB}): If $f\colon N^{\prime}\rightarrow N$ is a smooth map, $f^{\ast
}\mathcal{N}\rightarrow N^{\prime}$ is the pullback bundle and $\hat{f}\colon
f^{\ast}\mathcal{N}\rightarrow\mathcal{N}$ is the induced map, then we have
$\int_{M}\hat{f}^{\ast}\chi=f^{\ast}\left(  \int_{M}\chi\right)  $. If $M$ has
boundary we have $\int_{\partial M}\hat{f}^{\ast}\chi=f^{\ast}\left(
\int_{\partial M}\chi\right)  $.

If $F\colon\mathcal{N}^{\prime}\rightarrow\mathcal{N}$ is a morphism of
bundles with projection $f\colon N^{\prime}\rightarrow N$, then we have
$F=\hat{f}\circ\tilde{F}$ for a bundle morphism $\tilde{F}\colon
\mathcal{N}^{\prime}\rightarrow f^{\ast}\mathcal{N}$ with projects onto the
identity map. If $\tilde{F}$ is an isomorphism of bundles that preserves the
orientation on the fibers, then we have $\int_{M}\tilde{F}^{\ast}\chi=\int
_{M}\chi$ for any $\chi\in\hat{H}^{k}(f^{\ast}\mathcal{N})$. Hence we have the following

\begin{proposition}
\label{naturalityIntegral}Let $\mathcal{N}^{\prime}\rightarrow N^{\prime}$ and
$\mathcal{N}\rightarrow N$ be bundles with fiber $M$ and let $G\colon
\mathcal{N}^{\prime}\rightarrow\mathcal{N}$ be a morphism with projection
$f\colon N^{\prime}\rightarrow N$. If $\tilde{F}\colon\mathcal{N}^{\prime
}\rightarrow f^{\ast}\mathcal{N}$ is an isomorphism of bundles that preserves
the orientation on the fibers, then for any $\chi\in\hat{H}^{n}(\mathcal{N})$
we have $\int_{M}F^{\ast}\chi=f^{\ast}\left(  \int_{M}\chi\right)  $. If $M$
has boundary we have $\int_{\partial M}F^{\ast}\chi=f^{\ast}\left(
\int_{\partial M}\chi\right)  $.
\end{proposition}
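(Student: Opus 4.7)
The plan is to combine the two ingredients that were recalled in the paragraph immediately preceding the statement, namely (i) the naturality of fiber integration under pullback bundles, $\int_{M}\hat{f}^{\ast}\chi=f^{\ast}(\int_{M}\chi)$, and (ii) the invariance of fiber integration under fiberwise orientation-preserving isomorphisms of bundles over the same base, $\int_{M}\tilde{F}^{\ast}\chi=\int_{M}\chi$. The whole point of the proposition is to package these two facts into a single statement that applies to an arbitrary bundle morphism $G$.

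First I would use the universal property of the pullback bundle to factor $G$ as $G=\hat{f}\circ\tilde{F}$, where $\tilde{F}\colon\mathcal{N}^{\prime}\to f^{\ast}\mathcal{N}$ is the canonical bundle morphism over $\mathrm{id}_{N^{\prime}}$ and $\hat{f}\colon f^{\ast}\mathcal{N}\to\mathcal{N}$ is the tautological lift of $f$. This factorization is the content of the paragraph just above the proposition. Under the hypothesis, $\tilde{F}$ is an isomorphism of bundles preserving the fiber orientation.

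Next, for $\chi\in\hat{H}^{n}(\mathcal{N})$, I would write
\[
\int_{M}G^{\ast}\chi=\int_{M}\tilde{F}^{\ast}(\hat{f}^{\ast}\chi)=\int_{M}\hat{f}^{\ast}\chi=f^{\ast}\!\left(\int_{M}\chi\right),
\]
where the first equality uses $G^{\ast}=\tilde{F}^{\ast}\circ\hat{f}^{\ast}$, the second uses (ii) applied to the character $\hat{f}^{\ast}\chi\in\hat{H}^{n}(f^{\ast}\mathcal{N})$, and the third uses (i). The case with boundary is identical once one observes that $f^{\ast}$ commutes with passing to the boundary bundle and that $\tilde{F}$ restricts to a fiberwise orientation-preserving isomorphism of the $\partial M$-subbundles, so the same chain of equalities proves $\int_{\partial M}G^{\ast}\chi=f^{\ast}(\int_{\partial M}\chi)$.

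There is no real obstacle here: the proposition is essentially a bookkeeping consequence of the two properties already recalled, and the only thing to check carefully is that $\tilde{F}$ indeed preserves the orientation on the fibers over each point of $N^{\prime}$ (so that (ii) applies pointwise), which follows from the hypothesis because $\hat{f}$ restricts to an orientation-preserving diffeomorphism on each fiber by construction and $G$ does so by assumption, forcing the same for $\tilde{F}=\hat{f}^{-1}\!\!\mid_{\text{fiber}}\circ\, G\!\mid_{\text{fiber}}$.
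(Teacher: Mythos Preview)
Your argument is correct and is exactly the paper's approach: the paper does not give a separate proof but simply writes ``Hence we have the following'' after recalling the factorization $F=\hat{f}\circ\tilde{F}$ and the two properties (i) and (ii) you use, so your proposal merely spells out the one-line chain of equalities that the paper leaves implicit. The only cosmetic point is the notational slip in the statement itself (the morphism is called $G$ but the conclusion writes $F^{\ast}\chi$); you correctly read $F$ as $G$.
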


\section{Equivariant deRham cohomology in the Cartan model\label{equi}}

We recall the definition of equivariant cohomology in the Cartan model
(\emph{e.g. }see \cite{BGV,GuiS}). Suppose that we have a left action of a
connected Lie group $\mathcal{G}$ on a manifold $N$. The map $X\mapsto
X_{N}(x)=\left.  \frac{d}{dt}\right\vert _{t=0}(\exp(-tX))(x)$ induces a\ Lie
algebra homomorphism $\mathrm{Lie\,}\mathcal{G}\rightarrow\mathfrak{X}(N)$.
The space of $\mathcal{G}$-equivariant differential forms is the space of
$\mathcal{G}$-invariant polynomials on $\mathrm{Lie\,}\mathcal{G}$ with values
in $\Omega^{\bullet}(N)$, $\Omega_{\mathcal{G}}(N)=\left(  \mathbf{S}%
^{\bullet}(\mathrm{Lie\,}\mathcal{G}^{\ast})\otimes\Omega^{\bullet}(N)\right)
^{\mathcal{G}}$ ($\mathcal{G}$\ acts on $\mathrm{Lie\,}\mathcal{G}$ by the
adjoint representation). The graduation on $\Omega_{\mathcal{G}}(N)$ is
defined by setting $\deg(\alpha)=2k+r$ if $\alpha\in\mathbf{S}^{k}%
(\mathrm{Lie\,}\mathcal{G}^{\ast})\otimes\Omega^{r}(N)$. Let $D\colon
\Omega_{\mathcal{G}}^{q}(N)\rightarrow\Omega_{\mathcal{G}}^{q+1}(N)$ be the
Cartan differential, $(D\alpha)(X)=d(\alpha(X))-\iota_{X_{N}}\alpha(X)$, for
$X\in\mathrm{Lie\,}\mathcal{G}.$ On $\Omega_{\mathcal{G}}^{\bullet}(N)$ we
have $D^{2}=0$, and the $\mathcal{G}$-equivariant cohomology (in the Cartan
model) of $N$ is defined as the cohomology of this complex.

A $\mathcal{G}$-equivariant $2$-form $\varpi$ is given by $\varpi
(X)=\sigma+\mu(X)$ where $\sigma$ is a $\mathcal{G}$-invariant $2$-form and
$\mu\colon\mathrm{Lie}\mathcal{G}\rightarrow\Omega^{0}(N)$ a linear
$\mathcal{G}$-equivariant map. The form $\varpi$ is $D$-closed if $d\sigma=0$
and $\iota_{X_{N}}\sigma=\mu(X)$ for every $X$. Hence $\mu$ is a co-moment map
for $\sigma$.

If a group acts on $M$, $N$, $C$ and $c\colon C\rightarrow M$ is $\mathcal{G}%
$-equivariant the integration map is extended to equivariant differential
forms $%
{\textstyle\int\nolimits_{c}}
\colon\Omega_{\mathcal{G}}^{k}(M\times N)\rightarrow\Omega_{\mathcal{G}}%
^{k-d}(N)$ by setting $\left(
{\textstyle\int\nolimits_{c}}
\alpha\right)  (X)=%
{\textstyle\int\nolimits_{c}}
(\alpha(X))$ for $X\in\mathrm{Lie\,}\mathcal{G}$, and we have $D\int
\nolimits_{C}\alpha=\int\nolimits_{C}D\alpha-(-1)^{k-d}\int\nolimits_{\partial
C}\alpha$.

\subsection{Equivariant characteristic classes in the Cartan
model\label{equiClass}}

We recall the definition of equivariant characteristic classes (see
\cite{BV1,BT} for details). Let {$\mathcal{G}$}$\ $be a group that acts (on
the left) on a principal $G$-bundle $\pi\colon P\rightarrow M${ }and let $A$
be a connection on $P$ invariant under the action of $\mathcal{G}$. It can be
proved (see \cite{BV1,BT}) that for every $X\in\mathrm{Lie}\mathcal{G}$ the{
$\mathfrak{g}$-valued function $A(X_{P})$ is of adjoint type and defines a
section of the adjoint bundle} $v_{A}(X)\in\Omega^{0}(N,\mathrm{ad}P)$. For
every $p\in I^{r}(G)$ the $\mathcal{G}$-equivariant characteristic form
$p_{\mathcal{G}}^{A}\in\Omega_{\mathcal{G}}^{2k}(N{)}$\ associated to $p$ and
$A$, is defined{ by} $p_{\mathcal{G}}^{A}(X)=p(F_{A}-v_{A}(X))$ for every
$X\in\mathrm{Lie}\mathcal{G}$.

A $\mathcal{G}$-equivariant $U(1)$-bundle is a principal $U(1)$-bundle
$\mathcal{U}\rightarrow N$ in which $\mathcal{G}$ acts by $U(1)$-bundle
automorphisms. If $\Xi\in\Omega^{1}(\mathcal{U},i\mathbb{R)}$ is a
$\mathcal{G}$-invariant connection then $\frac{i}{2\pi}D(\Xi)$ projects onto a
closed $\mathcal{G}$-equivariant $2$-form $\mathrm{curv}_{\mathcal{G}}(\Xi
)\in\Omega_{\mathcal{G}}^{2}(N))$ called the $\mathcal{G}$-equivariant
curvature of $\Xi$. If $X\in\mathrm{Lie}\mathcal{G}$ then $\mathrm{curv}%
_{\mathcal{G}}(\Xi)\!(X)\!=\mathrm{curv}(\Xi)-\frac{i}{2\pi}\iota
_{X_{\mathcal{U}}}\Xi$. If $\varpi\in\Omega_{\mathcal{G}}^{2}(N)$, a
$\mathcal{G}$-equivariant pre-quantization bundle for $\varpi$ is a principal
$U(1)$-bundle $\mathcal{U}\rightarrow N$ with a $\mathcal{G}$-invariant
connection $\Xi$ such that $\mathrm{curv}_{\mathcal{G}}(\Xi)=\varpi$.

\section{The space of connections\label{SectionConnections}}

Let $P\rightarrow M$ be a principal $G$-bundle, and $\mathcal{A}$ the space of
principal connections on this bundle. As $\mathcal{A}$ is an affine space
modeled on $\Omega^{1}(M,\mathrm{ad}P)$, we have canonical isomorphisms
$T_{A}\mathcal{A}\simeq\Omega^{1}(M,\mathrm{ad}P)$ for any $A\in\mathcal{A}$.
The Lie algebra of $\mathrm{Aut}P$ is the space of $G$-invariant vector fields
on $P$, $\mathrm{aut}P\subset\mathfrak{X}(P)$, and the Lie algebra of
$\mathrm{Gau}P$ is the subspace $\mathrm{gau}P$ of vertical $G$-invariant
vector fields. We have an identification $\mathrm{gau}P\simeq\Omega
^{0}(M,\mathrm{ad}P)$. The group $\mathrm{Aut}P$ acts on $\mathcal{A}$ and for
any $X\in\mathrm{aut}P$ we have $X_{\mathcal{A}}(A)=d^{A}(v_{A}(X))$. In
particular, if $X\in\mathrm{gau}P\simeq\Omega^{0}(M,\mathrm{ad}P)$ we have
$v_{A}(X)\simeq X$ and $X_{\mathcal{A}}(A)=d^{A}X$. The principal $G$-bundle
$\mathbb{P}=P\times\mathcal{A}\rightarrow M\times\mathcal{A}$ has a
tautological connection $\mathbb{A}\in\Omega^{1}(P\times\mathcal{A}%
,\mathfrak{g})$ defined by $\mathbb{A}_{(x,A)}(X,Y)=A_{x}(X)$ for $(x,A)\in
P\times\mathcal{A}$, $X$ $\in T_{x}P$, $Y\in T_{A}\mathcal{A}$. We denote by
$\mathbb{F}$ the curvature of $\mathbb{A}$ and we have $\mathbb{F}%
_{(x,A)}(a,a^{\prime})=0$, $\mathbb{F}_{(x,A)}(a,Y)=a(Y)$, $\mathbb{F}%
_{(x,A)}(Y,Y^{\prime})=F_{A}(Y,Y^{\prime})$ for $Y,Y^{\prime}\in T_{x}M$, and
$a,a^{\prime}\in T_{A}\mathcal{A}\simeq\Omega^{1}(M,\mathrm{ad}P)$. The group
$\mathrm{Aut}P$ acts on $\mathbb{P}$ by automorphisms and $\mathbb{A}$ is a
$\mathrm{Aut}P$-invariant connection. As the connection $\mathbb{A}$ is
$\mathrm{Aut}P$-invariant, for any $p\in I^{r}(G)$ we can define the
$\mathrm{Aut}P$-equivariant characteristic form $p_{\mathrm{Aut}P}%
^{\mathbb{A}}\in\Omega_{\mathrm{Aut}P}^{2r}(M\times\mathcal{A})$, given by
$p_{\mathrm{Aut}P}^{\mathbb{A}}(X)=p(\mathbb{F}-v_{\mathbb{A}}(X))$ for
$X\in\mathrm{aut}P$. If $M$ is a closed oriented manifold of dimension $n$ and
we consider the action of the group $\mathrm{Aut}^{+}P$, $p_{\mathrm{Aut}%
^{+}P}^{\mathbb{A}}\in\Omega_{\mathrm{Aut}^{+}P}^{2r}(M\times\mathcal{A})$ can
be integrated over $M$ to obtain $\int_{M}p_{\mathrm{Aut}^{+}P}^{\mathbb{A}%
}\in\Omega_{\mathrm{Aut}^{+}P}^{2r-n}(\mathcal{A})$. In particular, if
$n=2r-2$, we have $\varpi_{M}=\int_{M}p_{\mathrm{Aut}^{+}P}^{\mathbb{A}}%
\in\Omega_{\mathrm{Aut}^{+}P}^{2}(\mathcal{A})$ that can be written
$\varpi_{M}=\sigma_{M}+\mu_{M}$, with $\mu_{M}$ a co-moment map for
$\sigma_{M}$. If $M$ has boundary, we can integrate over $\partial M$ and we
obtain $\int_{\partial M}p_{\mathrm{Aut}^{+}P}^{\mathbb{A}}\in\Omega
_{\mathrm{Aut}^{+}P}^{2r-n+1}(\mathcal{A})$. In particular, if $n=2r-1$, we
have $\varpi_{\partial M}=\int_{\partial M}p_{\mathrm{Aut}^{+}P}^{\mathbb{A}%
}\in\Omega_{\mathrm{Aut}^{+}P}^{2}(\mathcal{A})$ that can be written
$\varpi_{\partial M}=\sigma_{\partial M}+\mu_{\partial M}$, with
$\mu_{\partial M}$ a co-moment map for $\sigma_{\partial M}$.

If we consider the action of the Gauge group, we have the $\mathrm{Gau}%
P$-equivariant characteristic form $p_{\mathrm{Gau}P}^{\mathbb{A}}\in
\Omega_{\mathrm{Gau}P}^{2r}(M\times\mathcal{A})$, given by $p_{\mathrm{Gau}%
P}^{\mathbb{A}}(X)=p(\mathbb{F}-X)$ for $X\in\mathrm{gau}P$. If $C$ is a
closed and oriented manifold of dimension $d$, for any map $c\colon
C\rightarrow M$ we can integrate $(c\times\mathrm{id}_{\mathcal{A}})^{\ast
}p_{\mathrm{Gau}P}^{\mathbb{A}}\in\Omega_{\mathrm{Gau}P}^{2r}(C\times
\mathcal{A})$ over $C$ to obtain $\int_{c}p_{\mathrm{Gau}P}^{\mathbb{A}}%
\in\Omega_{\mathrm{Gau}P}^{2r-d}(\mathcal{A})$. Again if $d=2r-2$, we have
$\varpi_{c}=\int_{c}p_{\mathrm{Gau}P}^{\mathbb{A}}\in\Omega_{\mathrm{Gau}%
P}^{2}(\mathcal{A})$ that can be written $\varpi_{c}=\sigma_{c}+\mu_{c}$, with
$\mu_{c}$ a co-moment map for $\sigma_{c}$.

The explicit expression of these forms is the following (see \cite{equiconn}).
For $A\in\mathcal{A}$, $a,b\in T_{A}\mathcal{A}\simeq\Omega^{1}(M,\mathrm{ad}%
P)$ and $X\in\mathrm{aut}P$ we have $(\sigma_{M})_{A}(a,b)=$\linebreak%
$r(r-1)\int_{M}p(a,b,F_{A},\overset{(r-2)}{\ldots},F_{A})$ and $(\mu_{M}%
)_{A}(X)=-r\int_{M}p(v_{A}(X),F_{A},\overset{(r-1)}{\ldots},F_{A})$, and
similar expressions for $\partial M$ and $c$.

As commented in the Introduction, our objective in this paper is to obtain
equivariant prequantization bundles of $(\mathcal{A},\varpi_{M})$,
$(\mathcal{A},\varpi_{\partial M})$ and $(\mathcal{A},\varpi_{c})$.

\section{Equivariant prequantization bundle\label{SectionPrequantization}}

In this section, we define the equivariant prequantization bundle. In Section
\ref{SectionDiscrete} we define the equivariant prequantization bundle for the
action of a discrete group. We show in Section \ref{SectionFirst} that the
definition for an arbitrary group can be reduced to the discrete case. In
place of working with the space of connections $\mathcal{A}$, we consider a
general connected and simply connected manifold $N$. It includes as particular
cases the space of connections for $N=\mathcal{A}$, and also the case of space
of Riemannian metrics.

We assume that $\mathcal{G}$ is a Lie group that acts (on the left) on the
following spaces

a) on a principal $G$-bundle $P\rightarrow M$ by\ $G$-bundle automorphisms,

b) on a connected and simply connected manifold $N$ and $\mathbb{A}$ is a
$\mathcal{G}$-invariant connection on the product bundle $\mathbb{P}=P\times
N\rightarrow M\times N$.

c) on a closed oriented manifold $C$ of dimension $d=2r-2$ and we have a
$\mathcal{G}$-equivariant map $c\colon C\rightarrow M$ and $\mathcal{G}$
preserves the orientation of $C$. We are interested in the following cases

c$_{1}$) The action of $\mathcal{G}$ on $M$ and $C$ is trivial (i.e.
$\mathcal{G}$ acts on $P$ by gauge transformations). In this case we can
consider any map $c\colon C\rightarrow M$.

c$_{2}$) $M$ is compact, $\partial M=0$ and $\mathcal{G}$ preserves the
orientation on $M$. In this case we can take $C=M$ and $c=\mathrm{id}_{M}$.

c$_{3}$) $M$ is an oriented manifold with compact boundary $\partial M$ and
$\mathcal{G}$ preserves the orientation on $M$. In this case we take
$C=\partial M$ and $c$ the inclusion $c\colon\partial M\hookrightarrow M$.

\bigskip

The following definitions are generalizations of the results in
\cite{equiconn} for connections and \cite{WP} for Riemannian metrics.

As the connection $\mathbb{A}$ is $\mathcal{G}$-invariant, for any polynomial
$p\in I^{r}(G)$ we can define the $\mathcal{G}$-equivariant characteristic
class $p_{\mathcal{G}}^{\mathbb{A}}\in\Omega_{\mathcal{G}}^{2r}(M\times N)$,
given by $p_{\mathcal{G}}^{\mathbb{A}}(X)=p(\mathbb{F}-v_{\mathbb{A}}(X))$ for
$X\in\mathrm{Lie}\mathcal{G}$. As $c\colon C\rightarrow M$ is $\mathcal{G}%
$-invariant we can integrate $(c\times\mathrm{id}_{M})^{\ast}p_{\mathcal{G}%
}^{\mathbb{A}}\in\Omega_{\mathcal{G}}^{2r}(C\times N)$ over $C$ to obtain
$\varpi_{c}=\int_{c}p_{\mathcal{G}}^{\mathbb{A}}\in\Omega_{\mathcal{G}}%
^{2}(N)$.\ We have $\varpi_{c}=\sigma_{c}+\mu_{c}$, with $\sigma_{c}=\int
_{c}p(\mathbb{F})$ and $\mu_{c}$ a co-moment map for $\sigma_{c}$ given by
$\mu_{c}(X)=-r\int_{c}p(v_{\mathbb{A}}(X),\mathbb{F}\mathbf{,}\overset
{(r-1)}{\mathbf{\ldots}}\mathbf{,}\mathbb{F})$ for $X\in\mathrm{Lie}%
\mathcal{G}$.

As it is commented in the Introduction, the equivariant prequantization
bundles are given in terms of\ a background connection). Let $A_{0}$ be a
connection on $P\rightarrow M$ (we call $A_{0}$ a background connection. If
$\mathrm{pr}_{1}\colon P\times N\rightarrow P$ denotes the projection, then
$\mathbb{A}$ and $\overline{A}_{0}=\mathrm{pr}_{1}^{\ast}A_{0}$ are
connections on the same bundle $P\times N\rightarrow M\times N$, and hence we
can define $Tp(\mathbb{A},\overline{A}_{0})\in\Omega^{2r-1}(M\times N)$. The
product structure on $M\times N$ induces a bigraduation $\Omega^{k}(M\times
N)\simeq\bigoplus\nolimits_{i=0}^{k}\Omega^{i,k-i}(M\times N)$. We have
$p(\mathbb{F})=dTp(\mathbb{A},\overline{A}_{0})+\mathrm{pr}_{1}^{\ast}%
p(F_{0})$, with $\mathrm{pr}_{1}^{\ast}p(F_{0})\in\Omega^{2r,0}(M\times N)$.
Hence for any $u\colon U\rightarrow M$ with $d=\dim U<2r-1$ we have%
\begin{equation}%
{\textstyle\int_{u}}
p(\mathbb{F})\!=\!%
{\textstyle\int_{u}}
\!dTp(\mathbb{A},\overline{A}_{0})\!=\!d%
{\textstyle\int_{u}}
\!Tp(\mathbb{A},\overline{A}_{0})\!-\!(-1)^{d}\!%
{\textstyle\int_{\partial u}}
\!Tp(\mathbb{A},\overline{A}_{0}), \label{backgr1}%
\end{equation}
where we have used that $\int_{u}\mathrm{pr}_{1}^{\ast}p(F_{0})=0$ as
$\mathrm{pr}_{1}^{\ast}p(F_{0})\in\Omega^{2r,0}(M\times N)$. In particular, if
we define $\rho_{c}=\int_{c}Tp(\mathbb{A},\overline{A}_{0})\in\Omega^{1}(N)$,
then by using equation (\ref{backgr1}) and that $\partial c=0$ we obtain
\begin{equation}
\sigma_{c}=%
{\textstyle\int_{c}}
p(\mathbb{F})=d\rho_{c}. \label{dRho}%
\end{equation}

\subsection{Discrete group\label{SectionDiscrete}}

Assume that $\mathcal{G}$ is a discrete group. Let $E$ be\ a manifold in which
$\mathcal{G}$ acts and such that the following condition is satisfied:

(*) $E$ is connected and simply connected and $\pi\colon N\times
E\rightarrow(N\times E)/\mathcal{G}$ is a principal $\mathcal{G}$-bundle.

For example we can take $E=\mathbf{E}\mathcal{G}$ or another simpler manifold.
We denote by $q\colon M\times N^{\prime}\rightarrow M\times N$ and
$\overline{q}\colon\mathbb{P}^{\prime}=\mathbb{P}\times E\rightarrow
\mathbb{P}$ the projections, which are $\mathcal{G}$-equivariant maps. Hence
$\overline{q}^{\ast}\mathbb{A}$ is a $\mathcal{G}$-invariant connection on
$\mathbb{P}^{\prime}\rightarrow M\times N^{\prime}$, and it projects onto a
connection $\underline{\mathbb{A}}$ on the quotient principal $G$-bundle
$\mathbb{P}^{\prime}/\mathcal{G}\rightarrow(M\times N^{\prime})/\mathcal{G}$.
We denote by $\underline{\mathbb{F}}$ the curvature of $\underline{\mathbb{A}%
}$. Given $(p,\Upsilon)\in\mathcal{I}_{\mathbb{Z}}^{r}(G)$ we have the
Chern-Simons character $\chi_{\underline{\mathbb{A}}}\in\hat{H}^{2r}((M\times
N^{\prime})/\mathcal{G})$. As $c\colon C\rightarrow M$ is a $\mathcal{G}%
$-equivariant map, it induces a map $\underline{c\times\mathrm{id}_{N}}%
\colon(C\times N^{\prime})/\mathcal{G}\rightarrow(M\times N^{\prime
})/\mathcal{G}$. The character $(\underline{c\times\mathrm{id}_{N^{\prime}}%
})^{\ast}\chi_{\underline{\mathbb{A}}}\in\hat{H}^{2r}((C\times N^{\prime
})/\mathcal{G})$ can be integrated over the fibre of $(C\times N^{\prime
})/\mathcal{G}\rightarrow N^{\prime}/\mathcal{G}$ and we obtain a differential
character $\xi_{c}=\int_{c}\chi_{\underline{\mathbb{A}}}=\int_{C}%
(\underline{c\times\mathrm{id}_{N}})^{\ast}\chi_{\underline{\mathbb{A}}}%
\in\hat{H}^{2}(N^{\prime}/\mathcal{G})$. We have $\mathrm{curv}(\xi_{c}%
)=\int_{c}\mathrm{curv}(\chi_{\underline{\mathbb{A}}})=\int_{c}p(\underline
{\mathbb{F}})$ and $\mathrm{char}(\xi_{c})=\int_{c}\mathrm{char}%
(\chi_{\underline{\mathbb{A}}})=\int_{c}\Upsilon_{\mathbb{P}^{\prime
}/\mathcal{G}}$.

If $A_{0}$ is a background connection,\ by equation (\ref{dRho}) we have
$\pi^{\ast}(\int_{c}p(\underline{\mathbb{F}}))=\int_{c}p(q^{\ast}%
\mathbb{F})=q^{\ast}\int_{c}p(\mathbb{F})=d(q^{\ast}\rho_{c})$. By applying
Proposition \ref{Prop} with $\lambda=q^{\ast}\rho_{c}$we obtain a cocycle
$\bar{\alpha}_{c}\colon\mathcal{G}\times N\times E\rightarrow\mathbb{R}%
/\mathbb{Z}$. Precisely, if $\phi\in\mathcal{G}$, $\gamma$ is a curve on $N$
joining $x$ and $\phi x$ and $\gamma^{\prime}$ is a curve on $E$ joining $e$
and $\phi e$ we have
\[
\bar{\alpha}_{\phi}(x,e)=\int\nolimits_{\gamma\times\gamma^{\prime}}q^{\ast
}\rho_{c}-\xi_{c}(\pi\circ(\gamma\times\gamma^{\prime}))=\int\nolimits_{\gamma
}\rho_{c}-\xi_{c}(\pi\circ(\gamma\times\gamma^{\prime})).
\]

\begin{remark}
\emph{Note that by Lemma \ref{cambiopunto} }$\bar{\alpha}_{\phi}$\emph{ is
differentiable and }$d\bar{\alpha}_{\phi}=\phi^{\ast}q^{\ast}\rho_{c}-q^{\ast
}\rho_{c}=q^{\ast}(\phi^{\ast}\rho_{c}-\rho_{c})$.
\end{remark}

\begin{lemma}
\label{independenciaEdiscreto}We have

a) $\bar{\alpha}_{\phi}(x,e)$ does not depend on $e\in E$, and hence
$\bar{\alpha}=q^{\ast}\alpha$ for a cocycle $\alpha\colon\mathcal{G}\times
N\rightarrow\mathbb{R}/\mathbb{Z}$. Furthermore, $\alpha$ satisfies
$\alpha_{\phi_{2}\phi_{1}}(x)=\alpha_{\phi_{1}}(x)+\alpha_{\phi_{2}}(\phi
_{1}x)$ and $d\alpha_{\phi}=\phi^{\ast}\rho_{c}-\rho_{c}$ for $\phi$,
$\phi_{1}$, $\phi_{2}\in\mathcal{G}$.

b) $\alpha_{c}$ does not dependent on the manifold $E$ chosen.
\end{lemma}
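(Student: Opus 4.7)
The strategy is to reduce both claims to the properties of $\bar{\alpha}$ already produced by Theorem \ref{Prop}, exploiting the fact that $\rho_{c}\in\Omega^{1}(N)$ is pulled back to $N\times E$ along the projection forgetting $E$, so it contains no $E$--directional information.

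\emph{Part (a).} Fix $x\in N$ and $\phi\in\mathcal{G}$. To show that $\bar{\alpha}_{\phi}(x,e)$ does not depend on $e$, I apply Lemma \ref{cambiopunto}(a) to the action on $N\times E$ with a change of base point from $(x,e)$ to $(x,e')$ along the vertical curve $\tilde{\gamma}(t)=(x,\gamma_{ee'}(t))$. This gives
\[
\bar{\alpha}_{\phi}(x,e')-\bar{\alpha}_{\phi}(x,e)=\int_{\tilde{\gamma}}(\phi^{\ast}q^{\ast}\rho_{c}-q^{\ast}\rho_{c}).
\]
Since $\rho_{c}$ lives on $N$ and $q$ factors through the projection $N\times E\to N$, and since the same is true of $q\circ\phi$ by $\mathcal{G}$-equivariance of this projection, both $q^{\ast}\rho_{c}$ and $\phi^{\ast}q^{\ast}\rho_{c}$ annihilate every vector tangent to $\{x\}\times E$. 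The integrand vanishes identically, and by connectedness of $E$ the function $e\mapsto\bar{\alpha}_{\phi}(x,e)$ is constant. Hence one can define $\alpha_{\phi}(x):=\bar{\alpha}_{\phi}(x,e)$ unambiguously and $\bar{\alpha}$ is the pullback of $\alpha$ by $\mathrm{id}_{\mathcal{G}}\times\mathrm{pr}_{N}$. The cocycle identity $\alpha_{\phi_{2}\phi_{1}}(x)=\alpha_{\phi_{1}}(x)+\alpha_{\phi_{2}}(\phi_{1}x)$ follows by pushing down the analogous identity for $\bar{\alpha}$ from Lemma \ref{cociclo}, and the formula $d\alpha_{\phi}=\phi^{\ast}\rho_{c}-\rho_{c}$ follows from Lemma \ref{cambiopunto}(b), using that $\mathrm{pr}_{N}$ is a surjective submersion so its pullback on forms is injective.

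\emph{Part (b).} Given two auxiliary manifolds $E_{1}$ and $E_{2}$ satisfying $(*)$, form $E_{3}=E_{1}\times E_{2}$ with the diagonal $\mathcal{G}$-action. This $E_{3}$ is connected and simply connected (Künneth), and $\mathcal{G}$ acts freely and properly discontinuously on $N\times E_{3}$ because it already does on $N\times E_{1}$; therefore $E_{3}$ satisfies $(*)$. The equivariant projections $\mathrm{pr}_{i}\colon N\times E_{3}\to N\times E_{i}$ induce maps of quotients compatible with the whole construction: the pullback of the quotient bundle $\mathbb{P}^{\prime}/\mathcal{G}$ from $E_{i}$ to $E_{3}$ is canonically the quotient bundle over $E_{3}$, and the tautological connection is preserved. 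By the naturality of the Chern-Simons character recalled in Section \ref{ChS} and the naturality of fiber integration (Proposition \ref{naturalityIntegral}), one obtains $\xi_{c}^{(E_{3})}=\overline{\mathrm{pr}}_{i}^{\ast}\xi_{c}^{(E_{i})}$ on the corresponding quotients. Using a curve of the form $(\gamma,\gamma_{1}',\gamma_{2}')$ in $N\times E_{3}$ and the identity $\int_{(\gamma,\gamma_{1}',\gamma_{2}')}q^{\ast}\rho_{c}=\int_{\gamma}\rho_{c}$, the defining formula for $\bar{\alpha}^{(E_{3})}$ reduces immediately to that of $\bar{\alpha}^{(E_{i})}$, so the functions $\alpha^{(E_{3})}$, $\alpha^{(E_{1})}$ and $\alpha^{(E_{2})}$ on $N$ all coincide.

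\emph{Anticipated obstacle.} The only nonformal step is the naturality assertion in (b): one must track carefully that passing from $E_{i}$ to $E_{3}$ — pulling back $\mathbb{A}$, taking the $\mathcal{G}$-quotient, forming the Chern-Simons character, then integrating over $c$ — commutes with pullback along $\overline{\mathrm{pr}}_{i}$. This follows from the characterization of $\chi_{A}$ recalled in Section \ref{ChS} (natural under $G$-bundle maps) together with Proposition \ref{naturalityIntegral}; once it is accepted, the remainder of the proof is a direct substitution in the formulas defining $\bar{\alpha}$.
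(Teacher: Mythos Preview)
Your proof is correct and follows essentially the same route as the paper: part (a) invokes Lemma~\ref{cambiopunto}(a) along a purely $E$-directional curve and uses that $q^{\ast}\rho_{c}$ (and its $\phi$-pullback, via $\mathcal{G}$-equivariance of the projection) has no $E$-component, while part (b) passes through the product $E_{3}=E_{1}\times E_{2}$ and uses naturality of the Chern--Simons character together with Proposition~\ref{naturalityIntegral} to identify $\xi_{c}^{(E_{3})}$ with the pullbacks of $\xi_{c}^{(E_{i})}$. The only difference is cosmetic: you spell out why $E_{3}$ satisfies~$(*)$ and phrase the vanishing in (a) in terms of tangent vectors rather than writing $\phi^{\ast}q^{\ast}\rho_{c}-q^{\ast}\rho_{c}=q^{\ast}(\phi^{\ast}\rho_{c}-\rho_{c})$ directly.
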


\begin{proof}
a) By Lemma \ref{cambiopunto}, if $e^{\prime}$ is another point on $E$ and
$\gamma_{ee^{\prime}}$ a curve on $E$ joining $e$ and $e^{\prime}$ (it exists
as $E$ is connected) we have $\alpha_{\phi}(x,e^{\prime})=\alpha_{\phi
}(x,e)+\int\nolimits_{\{x\}\times\gamma_{ee^{\prime}}}(\phi^{\ast}q^{\ast}%
\rho_{c}-q^{\ast}\rho_{c})$. But $\int\nolimits_{\{x\}\times\gamma
_{ee^{\prime}}}(\phi^{\ast}q^{\ast}\rho_{c}-q^{\ast}\rho_{c})=\int
\nolimits_{\{x\}\times\gamma_{ee^{\prime}}}q^{\ast}(\phi^{\ast}\rho_{c}%
-\rho_{c})=0$, and hence $\alpha_{\phi}(x,e^{\prime})=\alpha_{\phi}(x,e)$.

b) Let $E_{1}$, $E_{2}$ be two manifolds satisfying condition (*). Then
$E_{3}=E_{1}\times E_{2}$ also satisfies (*). We define $Q_{i}\colon(P\times
N\times E_{3})/\mathcal{G}\rightarrow(P\times N\times E_{i})/\mathcal{G}$, and
$q_{i}\colon(N\times E)/\mathcal{G}\rightarrow(N\times E_{i})/\mathcal{G}$,
$i=1,2$. We have $Q_{1}^{\ast}\underline{\overline{q}_{1}^{\ast}\mathbb{A}%
}\mathbb{=}Q_{2}^{\ast}\underline{\overline{q}_{2}^{\ast}\mathbb{A}%
}=\underline{\overline{q}_{3}^{\ast}\mathbb{A}}$, and by using Proposition
\ref{naturalityIntegral}\ we obtain $q_{1}^{\ast}\xi_{c}^{1}\mathbb{=}%
q_{2}^{\ast}\xi_{c}^{2}=\xi_{c}^{3}$.

If $\phi\in\mathcal{G}$, $\gamma$ is a curve on $N$ joining $x$ and $\phi x$,
and $\gamma_{i}$ is a curve on $E_{i}$ joining $e_{i}$ and $\phi e_{i}$ we
have $\xi_{c}^{3}(\pi_{3}\circ(\gamma\times\gamma_{1}\times\gamma_{2}%
))=\xi_{c}^{i}(q_{i}\circ\pi_{3}\circ(\gamma\times\gamma_{1}\times\gamma
_{2}))=\xi_{c}^{i}(\pi_{i}\circ(\gamma\times\gamma_{i}))$ for $i=1,2$, and by
the definition of $\overline{\alpha}_{i}$ and a) we have $(\alpha_{1})_{\phi
}(x)=(\overline{\alpha}_{1})_{\phi}(x,e_{1})=(\overline{\alpha}_{3})_{\phi
}(x,e_{1},e_{2})=(\overline{\alpha}_{2})_{\phi}(x,e_{2})=(\alpha_{2})_{\phi
}(x)$.
\end{proof}

The cocycle $\alpha\colon\mathcal{G}\times N\rightarrow\mathbb{R}/\mathbb{Z}$
defines an action of $\mathcal{G}$ on $\mathcal{U}_{c}=N\times U(1)$ by
$U(1)$-bundle automorphisms $\Phi_{\phi}(x,u)=(\phi x,\exp(2\mathrm{\pi
}i\alpha_{\phi}(x))\cdot u)\ $and the connection form $\Xi_{c}=\theta-2\pi
i\rho_{c}$ is $\mathcal{G}$-invariant.

Hence, for any action of a discrete group $\mathcal{G}$ on $N$ we have the following

\begin{proposition}
\label{PropIntCSDiscreto copy(1)} Let $A_{0}$ be a background connection on
$P\rightarrow M$. Then there exists a lift of the action of $\mathcal{G}$ on
$N$ to an action on $\mathcal{U}_{c}=N\times U(1)$ by $U(1)$-bundle
automorphisms such that $\Xi_{c}=\theta-2\pi i\rho_{c}\in\Omega^{1}(N\times
U(1),i\mathbb{R})$ is $\mathcal{G}$-invariant.
\end{proposition}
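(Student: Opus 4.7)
The plan is to package the construction carried out in the paragraphs immediately preceding the statement, verifying that the triple $(\alpha, \Phi_\phi, \Xi_c)$ built there supplies the required lift and invariance.

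First I would fix an auxiliary manifold $E$ satisfying condition $(\ast)$; for concreteness $E=\mathbf{E}\mathcal{G}$ will do, but any choice works by Lemma~\ref{independenciaEdiscreto}(b). On the quotient $(M\times N\times E)/\mathcal{G}$ the $\mathcal{G}$-invariant connection $\overline{q}^{\ast}\mathbb{A}$ descends to $\underline{\mathbb{A}}$, and a compatible pair $(p,\Upsilon)\in\mathcal{I}_{\mathbb{Z}}^{r}$ produces the Chern--Simons character $\chi_{\underline{\mathbb{A}}}$. Fibre-integration along $c$ then yields $\xi_{c}\in\hat{H}^{2}(N'/\mathcal{G})$ with $N'=N\times E$, whose curvature is $\int_{c}p(\underline{\mathbb{F}})$. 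Next I apply Theorem~\ref{Prop} to $(N',\mathcal{G},\xi_{c})$ with $\lambda=q^{\ast}\rho_{c}$. The hypotheses hold: $N'$ is connected and simply connected (since both $N$ and $E$ are, so $H_{1}(N',\mathbb{Z})=0$); $\pi\colon N'\to N'/\mathcal{G}$ is a principal $\mathcal{G}$-bundle by $(\ast)$; and $\pi^{\ast}\mathrm{curv}(\xi_{c})=d(q^{\ast}\rho_{c})$ follows from equation~(\ref{dRho}) and naturality of $q$. Theorem~\ref{Prop} thus supplies the cocycle $\bar{\alpha}\colon\mathcal{G}\times N'\to\mathbb{R}/\mathbb{Z}$.

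The key descent step is Lemma~\ref{independenciaEdiscreto}(a), which asserts that $\bar{\alpha}=q^{\ast}\alpha$ for a cocycle $\alpha\colon\mathcal{G}\times N\to\mathbb{R}/\mathbb{Z}$ satisfying both the group-cocycle identity $\alpha_{\phi_{2}\phi_{1}}(x)=\alpha_{\phi_{1}}(x)+\alpha_{\phi_{2}}(\phi_{1}x)$ and the differential identity $d\alpha_{\phi}=\phi^{\ast}\rho_{c}-\rho_{c}$ (both inherited from the corresponding properties of $\bar{\alpha}$). I then define $\Phi_{\phi}(x,u)=(\phi x,\exp(2\pi i\,\alpha_{\phi}(x))\cdot u)$ on $\mathcal{U}_{c}=N\times U(1)$. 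Each $\Phi_{\phi}$ is a $U(1)$-bundle automorphism by construction, and the cocycle identity gives the group law $\Phi_{\phi_{2}}\circ\Phi_{\phi_{1}}=\Phi_{\phi_{2}\phi_{1}}$, exactly as in the proof of Theorem~\ref{Prop}.

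Finally, $\mathcal{G}$-invariance of $\Xi_{c}=\theta-2\pi i\rho_{c}$ follows by direct computation:
\begin{equation*}
\Phi_{\phi}^{\ast}\Xi_{c}=\Phi_{\phi}^{\ast}\theta-2\pi i\phi^{\ast}\rho_{c}=\bigl(\theta+2\pi i\,d\alpha_{\phi}\bigr)-2\pi i\phi^{\ast}\rho_{c}=\theta+2\pi i(\phi^{\ast}\rho_{c}-\rho_{c})-2\pi i\phi^{\ast}\rho_{c}=\Xi_{c},
\end{equation*}
using $d\alpha_{\phi}=\phi^{\ast}\rho_{c}-\rho_{c}$. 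The only substantive obstacle is the descent in Lemma~\ref{independenciaEdiscreto}(a): it relies on the fact that $\lambda=q^{\ast}\rho_{c}$ is pulled back from $N$, so curves purely in the $E$-direction contribute nothing to $\int(\phi^{\ast}\lambda-\lambda)$. This is precisely what renders the auxiliary factor $E$ invisible and makes the passage from $\bar{\alpha}$ on $N\times E$ to $\alpha$ on $N$ legitimate and canonical.
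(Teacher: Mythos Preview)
Your proposal is correct and follows essentially the same route as the paper: the proposition in question is a summary of the construction carried out in the preceding paragraphs, and you have faithfully reproduced that construction (choice of $E$, formation of $\xi_{c}$, application of Theorem~\ref{Prop} with $\lambda=q^{\ast}\rho_{c}$, descent via Lemma~\ref{independenciaEdiscreto}(a), and the explicit check $\Phi_{\phi}^{\ast}\Xi_{c}=\Xi_{c}$). If anything, your write-up is slightly more explicit than the paper's, which simply asserts the $\mathcal{G}$-invariance of $\Xi_{c}$ without displaying the computation.
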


We recall that a $\mathcal{G}$-equivariant section of $\mathcal{U}%
_{c}\rightarrow N$ is determined by a map $S\colon N\rightarrow U(1)$
$S(x)=\exp(2\pi i\cdot s(x))$ where $s\colon N\rightarrow\mathbb{R}%
/\mathbb{Z}$ satisfies $\alpha_{\phi}(x)=s(\phi x)-s(x)$. The following result
shows that our bundle generalizes the Chern-Simons line

\begin{proposition}
If $c=\partial u$ for a $\mathcal{G}$-equivariant map $u\colon U\rightarrow M$
and we define $s_{u}=-%
{\textstyle\int\nolimits_{u}}
Tp(\mathbb{A},\overline{A}_{0})\in\Omega^{0}(N)$, then $\alpha_{\phi}%
(x)=s_{u}(\phi x)-s_{u}(x)$. Hence $S_{u}=\exp(2\mathrm{\pi}i\cdot s_{u})$
determines a $\mathcal{G}$-equivariant section of $\mathcal{U}_{c}\rightarrow
N$.

Furthermore, we have $\nabla^{\Xi_{c}}S_{u}=-2\mathrm{\pi}i\sigma_{u}\cdot
S_{u}$, where $\sigma_{u}=\int_{u}p(\mathbb{F})$.
\end{proposition}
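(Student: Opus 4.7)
The plan is to verify two statements: the equivariance of $S_u$ (which is equivalent to the cocycle identity $\alpha_\phi(x)=s_u(\phi x)-s_u(x)$ for all $\phi\in\mathcal{G}$, since the action is $\Phi_\phi(x,u)=(\phi x,\exp(2\pi i\alpha_\phi(x))\cdot u)$) and the covariant-derivative formula. Both statements reduce to a single preliminary computation, so I would start there. Applying the Stokes-type formula for fiber integration to $u\colon U\to M$ with $\dim U=2r-1$, using $dTp(\mathbb{A},\overline{A}_0)=p(\mathbb{F})-\mathrm{pr}_1^\ast p(F_0)$, the pure-type term $\int_u\mathrm{pr}_1^\ast p(F_0)$ vanishes (it is $(2r,0)$ on $M\times N$ while $\dim U<2r$), and the boundary term is $\int_{\partial u}Tp(\mathbb{A},\overline{A}_0)=\rho_c$. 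This gives the key identity
\[
ds_u=-d\int_u Tp(\mathbb{A},\overline{A}_0)=-\sigma_u+\rho_c.
\]

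For the cocycle identity, I would fix $\phi\in\mathcal{G}$ and $x\in N$, take a curve $\gamma$ in $N$ joining $x$ and $\phi x$ and a curve $\gamma'$ in $E$ joining $e$ and $\phi e$, and compute $s_u(\phi x)-s_u(x)=\int_\gamma ds_u=\int_\gamma\rho_c-\int_\gamma\sigma_u$. Since $\alpha_\phi(x)=\int_\gamma\rho_c-\xi_c(\pi\circ(\gamma\times\gamma'))$ by the discrete-group definition, the identity reduces to showing $\xi_c(\pi\circ(\gamma\times\gamma'))=\int_\gamma\sigma_u$. Because $c=\partial u$ with $u$ being $\mathcal{G}$-equivariant, applying equation (\ref{deltau}) to $\chi_{\underline{\mathbb{A}}}$ through $\underline{u\times\mathrm{id}_{N'}}$ gives
\[
\xi_c=\int_{\partial u}\chi_{\underline{\mathbb{A}}}=\varsigma\Big(\int_u p(\underline{\mathbb{F}})\Big).
\]
Evaluating on the projected curve and using naturality $\pi^\ast\int_u p(\underline{\mathbb{F}})=\int_u p(\mathbb{F})$ (which is pulled back from $N$ to $N\times E$ via the projection $q_N$), the integral of $q_N^\ast\sigma_u$ over the concatenation $\gamma\ast\gamma'$ collapses to $\int_\gamma\sigma_u$ because $\sigma_u$ is independent of $E$ and the $\gamma'$-segment has constant $N$-coordinate. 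This yields (i); independence of the auxiliary manifold $E$ is implicit from Lemma \ref{independenciaEdiscreto}.

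For the covariant-derivative statement, since $S_u=\exp(2\pi i s_u)$ satisfies $S_u^\ast\theta=2\pi i\,ds_u$, I would compute
\[
S_u^\ast\Xi_c=S_u^\ast\theta-2\pi i\rho_c=2\pi i(\rho_c-\sigma_u)-2\pi i\rho_c=-2\pi i\sigma_u,
\]
and apply the standard formula $\nabla^{\Xi_c}S_u=(S_u^\ast\Xi_c)\cdot S_u$ for the covariant derivative of a section of a principal $U(1)$-bundle with respect to its induced Hermitian connection.

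The main obstacle is in the second paragraph: correctly matching the fiber-integrated Chern-Simons character $\xi_c$ on $(M\times N')/\mathcal{G}$ against the product-curve definition of $\alpha_\phi$, which requires tracking the sign conventions in Stokes' theorem for fiber integration, the induced orientation on $\partial u$, and the naturality of fiber integration on differential characters. Once $ds_u=\rho_c-\sigma_u$ is established and (\ref{deltau}) is invoked, the remainder of the argument is essentially algebraic.
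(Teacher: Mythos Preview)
Your proposal is correct and follows essentially the same route as the paper: both establish the key identity $ds_u=\rho_c-\sigma_u$ via Stokes for fiber integration, then invoke equation (\ref{deltau}) to reduce $\xi_c=\int_{\partial u}\chi_{\underline{\mathbb{A}}}$ to $\varsigma(\int_u p(\underline{\mathbb{F}}))$, evaluate on the curve to get $\int_\gamma\sigma_u$, and conclude $\alpha_\phi(x)=\int_\gamma\rho_c-\int_\gamma\sigma_u=\int_\gamma ds_u=s_u(\phi x)-s_u(x)$; the covariant-derivative computation is likewise identical in substance. One small notational point: the curve in $N\times E$ joining $(x,e)$ to $(\phi x,\phi e)$ should be written $\gamma\times\gamma'$ (the diagonal product curve) rather than the concatenation $\gamma\ast\gamma'$, but since any such curve works this does not affect the argument.
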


\begin{proof}
If we define $\sigma_{u}=\int_{u}p(\mathbb{F})$ then $d\sigma_{u}=\int
_{u}d(p(\mathbb{F}))+\int_{\partial u}p(\mathbb{F})=\sigma_{c}$. Moreover by
equations (\ref{backgr1}) and (\ref{dRho}) we have $\sigma_{u}=\int
_{u}p(\mathbb{F})=d\int_{u}Tp(\mathbb{A},\overline{A}_{0})+\int
\nolimits_{\partial u}Tp(\mathbb{A},\overline{A}_{0})=-ds_{u}+\rho_{c}.$ For
$\gamma$ a curve joining $x$ and $\phi x$, and $\gamma^{\prime}$ a curve on
$E$ joining $e$ and $\phi e$, using the preceding equations and equation
(\ref{deltau}) we have%
\begin{multline*}
\alpha_{\phi}(x)=%
{\textstyle\int\nolimits_{\gamma}}
\rho_{c}-\left(
{\textstyle\int_{\partial u}}
\chi_{\underline{\mathbb{A}}}\right)  (\pi\circ(\gamma\times\gamma^{\prime}))=%
{\textstyle\int\nolimits_{\gamma}}
(\sigma_{u}+ds_{u})-%
{\textstyle\int\nolimits_{\gamma\times\gamma^{\prime}}}
q^{\ast}%
{\textstyle\int\nolimits_{u}}
p(\mathbb{F})\\
=%
{\textstyle\int\nolimits_{\gamma}}
(\sigma_{u}+ds_{u})-%
{\textstyle\int\nolimits_{\gamma}}
\sigma_{u}=%
{\textstyle\int\nolimits_{\gamma}}
ds_{u}=%
{\textstyle\int\nolimits_{\partial\gamma}}
s_{u}=s_{u}(\phi x)-s_{u}(x).
\end{multline*}

Finally we have $\nabla^{\Xi_{c}}S_{u}=dS_{u}-2\mathrm{\pi}i\rho_{c}\cdot
S_{u}=2\mathrm{\pi}ids_{u}\cdot S_{u}-2\mathrm{\pi}i\rho_{c}\cdot
S_{u}=-2\mathrm{\pi}i\sigma_{u}\cdot S_{u}$.
\end{proof}

\begin{remark}
\emph{We can also consider that }$\exp(2\pi i\cdot%
{\textstyle\int\nolimits_{u}}
Tp(\mathbb{A},\overline{A}_{0}))$\emph{ determines a section of the inverse
bundle} $\mathcal{U}_{c}^{-1}$ \emph{as it is done in \cite{Dupont}.}
\end{remark}

Let $\mathcal{H}$ be another discrete group and let $h\colon\mathcal{H}%
\rightarrow\mathcal{G}$ be a group homomorphism. $h$ induces actions\ of the
group $\mathcal{H}$ on $N$ and $P$ that satisfies conditions a), b) and c). If
$\phi\in\mathcal{H}$ we denote by $\alpha_{\phi}^{\mathcal{H}}$ and
$\Phi_{\phi}^{\mathcal{H}}$ the cocycle and action determined by the group
$\mathcal{H}$

\begin{proposition}
\label{subgroupDiscreto}We have $\alpha_{\phi}^{\mathcal{H}}=\alpha_{h(\phi
)}^{\mathcal{G}}$ and $\Phi_{\phi}^{\mathcal{H}}=\Phi_{h(\phi)}^{\mathcal{G}}$
for any $\phi\in\mathcal{H}$.
\end{proposition}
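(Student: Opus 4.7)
The plan is to compute both cocycles using a common auxiliary space $E$ and then exploit the naturality of the Chern--Simons construction. By Lemma~\ref{independenciaEdiscreto}(b), the value of the cocycle is independent of the $E$ chosen (as long as condition (*) holds), so I would take $E = E^{\mathcal{G}}\times E^{\mathcal{H}}$, where $E^{\mathcal{G}}$ satisfies (*) for the $\mathcal{G}$-action and $E^{\mathcal{H}}$ satisfies (*) for the $\mathcal{H}$-action. I let $\mathcal{G}$ act on $E$ via its given action on $E^{\mathcal{G}}$ and trivially on $E^{\mathcal{H}}$, and let $\mathcal{H}$ act via $h$ on $E^{\mathcal{G}}$ and by its native action on $E^{\mathcal{H}}$. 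One checks that both actions on $N\times E$ are free and that $N\times E$ remains connected and simply connected, so (*) holds for both.

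Fix $\phi\in\mathcal{H}$ and set $g=h(\phi)\in\mathcal{G}$. Since the $\mathcal{H}$-action on $P$ and $N$ is, by hypothesis, the restriction via $h$ of the $\mathcal{G}$-action, we have $\phi\cdot x=g\cdot x$ on $N$. Hence a single curve $\gamma$ on $N$ joining $x$ and $\phi x=g x$ serves for both computations, and the summand $\int_{\gamma}\rho_c$ in the cocycle formula is literally the same in the $\mathcal{G}$- and $\mathcal{H}$-calculations. The proof therefore reduces to showing
\[
\xi^{\mathcal{H}}_c\bigl(\pi^{\mathcal{H}}\!\circ(\gamma\times\gamma'_{\mathcal{H}})\bigr) \;=\; \xi^{\mathcal{G}}_c\bigl(\pi^{\mathcal{G}}\!\circ(\gamma\times\gamma'_{\mathcal{G}})\bigr)
\]
for appropriate auxiliary curves $\gamma'_{\mathcal{G}}$, $\gamma'_{\mathcal{H}}$ on $E$.

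This is where naturality does the work. The connection $\overline q^{\,*}\mathbb{A}$ on $\mathbb{P}'$ is simultaneously the pullback of $\underline{\mathbb{A}}^{\mathcal{G}}$ along the quotient $G$-bundle map $\mathbb{P}'\to\mathbb{P}'/\mathcal{G}$ and of $\underline{\mathbb{A}}^{\mathcal{H}}$ along $\mathbb{P}'\to\mathbb{P}'/\mathcal{H}$. By the naturality of the Chern--Simons character,
\[
(q^{\mathcal{G}})^{*}\chi_{\underline{\mathbb{A}}^{\mathcal{G}}} \;=\; \chi_{\overline q^{\,*}\mathbb{A}} \;=\; (q^{\mathcal{H}})^{*}\chi_{\underline{\mathbb{A}}^{\mathcal{H}}}.
\]
Moreover $\overline q^{\,*}\mathbb{A}$ is itself the pullback of $\mathbb{A}$ along the projection $\mathrm{pr}\colon M\times N\times E\to M\times N$, giving $\chi_{\overline q^{\,*}\mathbb{A}}=\mathrm{pr}^{*}\chi_{\mathbb{A}}$. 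Combining these identities with Proposition~\ref{naturalityIntegral} applied to fibre integration over $C$ yields
\[
(\pi^{\mathcal{G}})^{*}\xi^{\mathcal{G}}_c \;=\; \mathrm{pr}_N^{*}\!\int_c\chi_{\mathbb{A}} \;=\; (\pi^{\mathcal{H}})^{*}\xi^{\mathcal{H}}_c
\]
as differential characters on $N\times E$, where $\mathrm{pr}_N\colon N\times E\to N$ is the projection. Evaluating either side on the 1-chain $\gamma\times\gamma'$ depends only on $\gamma$, so both Chern--Simons contributions coincide and we conclude $\alpha^{\mathcal{H}}_\phi=\alpha^{\mathcal{G}}_{h(\phi)}$.

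The second identity $\Phi^{\mathcal{H}}_\phi=\Phi^{\mathcal{G}}_{h(\phi)}$ is then immediate from the formula $\Phi_\phi(x,u)=(\phi x,\exp(2\pi i\alpha_\phi(x))\cdot u)$: the $N$-components agree because $\phi\cdot x=h(\phi)\cdot x$, and the $U(1)$-components agree by the first identity. The main technical step to be checked carefully is the application of Proposition~\ref{naturalityIntegral}: one must identify the pullback of the $C$-fibre bundle $(C\times N\times E)/\mathcal{G}\to(N\times E)/\mathcal{G}$ along $\pi^{\mathcal{G}}$ with the trivial bundle $C\times N\times E\to N\times E$ (and similarly for $\mathcal{H}$), an elementary but somewhat fiddly exercise about free group actions.
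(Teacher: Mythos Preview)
Your naturality identities are correct, but there is a genuine gap at the final step. The equality
\[
(\pi^{\mathcal{G}})^{*}\xi^{\mathcal{G}}_c \;=\; (\pi^{\mathcal{H}})^{*}\xi^{\mathcal{H}}_c
\]
is an equality of differential characters on $N\times E$, and a differential character is by definition a homomorphism defined only on \emph{cycles}. The chain $\gamma\times\gamma'$ is not a cycle in $N\times E$: its boundary is $(\phi x,\phi e)-(x,e)\neq 0$. It becomes a cycle only \emph{after} projection to one of the quotients. So the phrase ``evaluating either side on the $1$-chain $\gamma\times\gamma'$'' is not a legal operation, and the equality of pulled-back characters does not by itself let you compare $\xi^{\mathcal{G}}_c\bigl(\pi^{\mathcal{G}}\circ(\gamma\times\gamma')\bigr)$ with $\xi^{\mathcal{H}}_c\bigl(\pi^{\mathcal{H}}\circ(\gamma\times\gamma')\bigr)$. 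For the same reason the middle expression $\int_c\chi_{\mathbb A}(\gamma)$ is undefined, since $\gamma$ is not a cycle in $N$.

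The paper avoids this by \emph{not} comparing on a common cover. It uses asymmetric auxiliary spaces, $N_1=N\times E\mathcal G$ for the $\mathcal G$-quotient and $N_2=N\times E\mathcal G\times E\mathcal H$ for the $\mathcal H$-quotient, and constructs an explicit map between the quotient spaces
\[
z\colon N_2/\mathcal H\longrightarrow N_1/\mathcal G,\qquad [x,e_1,e_2]_{\mathcal H}\longmapsto [x,e_1]_{\mathcal G},
\]
which is well defined precisely because $\mathcal H$ acts on $E\mathcal G$ through $h$. One then gets $\xi_c^{\mathcal H}=z^{*}\xi_c^{\mathcal G}$ via Proposition~\ref{naturalityIntegral}, and now both sides can be legitimately evaluated on the cycle $\pi_{\mathcal H}\circ\vec\gamma_2$ in $N_2/\mathcal H$, whose image under $z$ is the cycle $\pi_{\mathcal G}\circ\vec\gamma_1$ in $N_1/\mathcal G$. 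Your setup can be repaired along these lines (just drop the $E^{\mathcal H}$ factor on the $\mathcal G$ side), but the repair is exactly the paper's argument.
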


\begin{proof}
We define $N_{1}=N\times E\mathcal{G}$, $N_{2}=N\times E\mathcal{G}\times
E\mathcal{H}$, and the projections $\overline{q}_{1}\colon P\times
N_{1}\rightarrow P\times N$, $\overline{q}_{2}\colon P\times N_{2}\rightarrow
P\times N$. $h$ induces actions of $\mathcal{H}$ on $P$, $N$ and
$E\mathcal{G}$. We denote by $\underline{\mathbb{A}}_{\mathcal{G}}\in
\Omega^{1}((P\times N_{1})/\mathcal{G},\mathfrak{g})$ and $\underline
{\mathbb{A}}_{\mathcal{H}}\in\Omega^{1}((P\times N_{2})/\mathcal{H}%
,\mathfrak{g})$ the projections of the connections $\overline{q}_{1}^{\ast
}\mathbb{A}$ and $\overline{q}_{2}^{\ast}\mathbb{A}$, and by $\xi
_{c}^{\mathcal{H}}=\int_{c}\chi_{\underline{\mathbb{A}}_{\mathcal{H}}}\in
\hat{H}^{2}(N_{2}/\mathcal{H})$ and $\xi_{c}^{\mathcal{G}}=\int_{c}%
\chi_{\underline{\mathbb{A}}_{\mathcal{G}}}\in\hat{H}^{2}(N_{1}/\mathcal{G})$
the integrated Chern-Simons characters.

We define a map $\overline{Z}\colon(P\times N_{2}\mathbf{)}/\mathcal{H}%
\rightarrow(P\times N_{1})/\mathcal{G}$ by $\overline{Z}([y,x,e_{1}%
,e_{2}]_{\mathcal{H}})=[y,x,e_{1}]_{\mathcal{G}}$ and we have $\underline
{\mathbb{A}}_{\mathcal{H}}=\overline{Z}^{\ast}(\underline{\mathbb{A}%
}_{\mathcal{G}})$. In a similar way we define the maps $Z\colon(M\times
N_{2}\mathbf{)}/\mathcal{H}\rightarrow(M\times N_{1})/\mathcal{G}$ and
$z\colon N_{2}/\mathcal{H}\rightarrow N_{1}/\mathcal{G}$. $Z$ determines a
morphism of bundles of fiber $M$ satisfying the conditions of Proposition
\ref{naturalityIntegral} and we conclude that $\xi_{c}^{\mathcal{H}}=z^{\ast
}\xi_{c}^{\mathcal{G}}$.

Let $\gamma$ be a curve on $N$ joining $x$ and $\phi\cdot x=h(\phi)\cdot x$,
$\gamma_{1}$a curve on $E\mathcal{G}$ joining $e_{1}$ and $h(\phi)\cdot e_{1}$
and $\gamma_{2}$ a curve on $E\mathcal{H}$ joining $e_{2}$ and $\phi\cdot
e_{2}$, and let $\vec{\gamma}_{1}=\gamma\times\gamma_{1}$. $\vec{\gamma}%
_{2}=\gamma\times\gamma_{1}\times\gamma_{2}$. By Lemma
\ref{independenciaEdiscreto} we have $\alpha_{\phi}^{\mathcal{H}}%
(x)=\int\nolimits_{\gamma}\rho_{c}-\xi_{c}^{\mathcal{H}}(\pi_{\mathcal{H}%
}\circ\vec{\gamma}_{2})$ and $\alpha_{h(\phi)}^{\mathcal{G}}(x)=\int
\nolimits_{\gamma}\rho_{c}-\xi_{c}^{\mathcal{G}}(\pi_{\mathcal{G}}\circ
\vec{\gamma}_{1})$, where $\pi_{\mathcal{G}}\colon N_{1}\rightarrow
N_{1}/\mathcal{G}$, and $\pi_{\mathcal{H}}\colon N_{2}\rightarrow
N_{2}/\mathcal{H}$ are the projections. We have $\xi_{c}^{\mathcal{H}}%
(\pi_{\mathcal{H}}\circ\vec{\gamma}_{2})=z^{\ast}\xi_{c}^{\mathcal{G}}%
(\pi_{\mathcal{H}}\circ\vec{\gamma}_{2})=\xi_{c}^{\mathcal{G}}(z\circ
\pi_{\mathcal{H}}\circ\vec{\gamma}_{2})=\xi_{c}^{\mathcal{G}}(\pi
_{\mathcal{G}}\circ\vec{\gamma}_{1})$, and hence $\alpha_{\phi}^{\mathcal{H}%
}(x)=\alpha_{h(\phi)}^{\mathcal{G}}(x)$.
\end{proof}

\subsection{General Group\label{SectionFirst}}

In this Section we give a definition of the prequantization bundle valid for
arbitrary Lie groups. The definition for discrete groups can not be
generalized directly to Lie groups because the connection $\overline{q}^{\ast
}\mathbb{A}$ is not necessarily projectable to the quotient $(P\times N\times
E)/\mathcal{G}$. As commented in the Introduction, it is possible to obtain a
connection on the quotient space by using an auxiliary connection
$\mathfrak{A}$ on the principal $\mathcal{G}$-bundle $N\times E\rightarrow
(N\times E)/\mathcal{G}$. As this could be problematic for Fr\'{e}chet Lie
groups, we define the lift of the action of any element $\phi\in\mathcal{G}$
by using the results for discrete groups.

Given $\phi\in\mathcal{G}$, the homomorphism $\mathbb{Z}\rightarrow
\mathcal{G}$ $n\mapsto\phi^{n}$ determines actions of the group$\ \mathbb{Z}$
on $P$ and $N$. We apply the results of Section \ref{SectionDiscrete}\ to the
discrete group $\mathbb{Z}$ and we obtain a cocycle $\alpha_{\phi}\colon
N\rightarrow\mathbb{R}/\mathbb{Z}$ and a lifted action $\Phi_{\phi}\colon
N\times U(1)\rightarrow N\times U(1)$ by $U(1)$-bundle automorphisms that
leaves invariant\ the connection $\Xi_{c}=\theta-2\pi i\rho_{c}$. Let us
compute $\alpha_{\phi}$ explicitly. We consider the universal $\mathbb{Z}%
$-bundle $\mathbf{E}\mathbb{Z=R\rightarrow Z}$, and the products $\overline
{q}\colon P\times N\times\mathbb{R\rightarrow}P\times N$, $q\colon M\times
N\times\mathbb{R\rightarrow}M\times N$. The connection $\overline{q}^{\ast
}\mathbb{A}$ is $\mathbb{Z}$-invariant and hence projects onto a connection
$\underline{\mathbb{A}}_{\phi}$ on the principal $G$-bundle $\mathbb{(}P\times
N\times\mathbb{R)}/\mathbb{Z}\rightarrow\mathbb{(}M\times N\times
\mathbb{R)}/\mathbb{Z}$. Given $(p,\Upsilon)\in\mathcal{I}_{\mathbb{Z}}(G)$,
we have the Chern-Simons differential character $\chi_{\underline{\mathbb{A}%
}_{\phi}}\in\hat{H}^{2r}(\mathbb{(}M\times N\times\mathbb{R)}/\mathbb{Z})$,
and by integrating over $c$ we obtain the integrated Chern-Simons character
$\xi_{c}^{\phi}=\int_{c}\chi_{\underline{\mathbb{A}}_{\phi}}\in\hat{H}%
^{2}((N\times\mathbb{R)}/\mathbb{Z})$. If $\gamma$ is any curve on $N$ joining
$x$ and $\phi x$, we define $\vec{\gamma}\colon I\rightarrow N\times
\mathbb{R}$, $\vec{\gamma}(s)=(\gamma(s),s)$ and we have%
\[
\alpha_{\phi}(x)=\int\nolimits_{\vec{\gamma}}q^{\ast}\rho_{c}-\xi_{c}^{\phi
}(\pi_{\phi}\circ\vec{\gamma})=\int\nolimits_{\gamma}\rho_{c}-\xi_{c}^{\phi
}(\pi_{\phi}\circ\vec{\gamma})\text{,}%
\]
where $\pi_{\phi}\colon N\times\mathbb{R\rightarrow(}N\times\mathbb{R)}%
/\mathbb{Z}$ denotes the projection.

Note that $\mathbb{(}N\times\mathbb{R)}/\mathbb{Z}$ can be identified with
$\mathbb{(}N\times I\mathbb{)}/\mathbb{\sim}_{\phi}$ where the equivalence
relation is defined by $(x,0)\mathbb{\sim}_{\phi}(\phi x,1)$.With this
identification we have $\pi_{\phi}\circ\vec{\gamma}(s)=[\gamma(s),s]_{\phi}$,
which is a closed curve as we have $(\gamma(1),1)=(\phi x,1)\mathbb{\sim
}_{\phi}(x,0)=(\gamma(0),0)$.

The map $\Phi_{\phi}\colon N\times U(1)\rightarrow N\times U(1),$ $\Phi_{\phi
}(x,u)=(\phi x,\exp(2\mathrm{\pi}i\alpha_{\phi}(x))\cdot u)$ defines a group
action of $\mathcal{G}$ on $N\times U(1)$ as we have the following

\begin{proposition}
\label{GroupAction}For any $\phi_{1}$, $\phi_{2}\in\mathcal{G}$ we have
$\alpha_{\phi_{2}\phi_{1}}(x)=\alpha_{\phi_{1}}(x)+\alpha_{\phi_{2}}(\phi
_{1}x)$, and hence $\Phi_{\phi_{2}\phi_{1}}=\Phi_{\phi_{2}}\circ\Phi_{\phi
_{1}}$.
\end{proposition}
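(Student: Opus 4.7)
The strategy is to reduce the cocycle identity to the discrete case, where it is essentially Lemma~\ref{independenciaEdiscreto}(a). By construction, $\alpha_{\phi}$ is defined in Section~\ref{SectionFirst} by running the discrete procedure of Section~\ref{SectionDiscrete} on the cyclic group $\mathbb{Z}$ acting on $P$ and $N$ via $n\mapsto\phi^{n}$. The key observation is that this discrete construction depends only on the resulting $\mathbb{Z}$-action on $N$ and $P$, not on any ambient group in which $\mathbb{Z}$ is embedded; so in order to get a cocycle identity involving $\phi_{1}$, $\phi_{2}$ and $\phi_{2}\phi_{1}$ simultaneously, it suffices to realize all three as elements of one common discrete group and invoke the cocycle relation there.

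Explicitly, I would let $\mathcal{H} = F_{2} = \langle a,b\rangle$ be the free group on two generators and define $h\colon \mathcal{H}\to\mathcal{G}$ by $h(a)=\phi_{1}$, $h(b)=\phi_{2}$, so that $h(ba)=\phi_{2}\phi_{1}$. Through $h$, the discrete group $\mathcal{H}$ acts on $P$ and $N$, and choosing $E=\mathbf{E}\mathcal{H}$ (infinite-dimensional is harmless, since only fiber integrations of differential characters are used) the hypotheses of Section~\ref{SectionDiscrete} are met. The discrete construction produces a cocycle $\alpha^{\mathcal{H}}$, and Lemma~\ref{independenciaEdiscreto}(a) gives
\[
\alpha^{\mathcal{H}}_{ba}(x) = \alpha^{\mathcal{H}}_{a}(x) + \alpha^{\mathcal{H}}_{b}(\phi_{1} x).
\]

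To convert this into the desired identity one must identify $\alpha^{\mathcal{H}}_{g}(x) = \alpha_{h(g)}(x)$ for $g=a,b,ba$. For each such $g$ the map $\iota_{g}\colon\mathbb{Z}\to\mathcal{H}$, $n\mapsto g^{n}$, is a morphism of \emph{discrete} groups, so Proposition~\ref{subgroupDiscreto} applies and yields $\alpha^{\mathbb{Z},\mathcal{H}}_{1}(x) = \alpha^{\mathcal{H}}_{g}(x)$, where $\mathbb{Z}$ acts on $P$ and $N$ through the composite $h\circ\iota_{g}$, that is, via $n\mapsto h(g)^{n}$. But this is exactly the $\mathbb{Z}$-action used in Section~\ref{SectionFirst} to define $\alpha_{h(g)}$, so indeed $\alpha^{\mathcal{H}}_{g}(x)=\alpha_{h(g)}(x)$. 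Substituting into the displayed identity gives $\alpha_{\phi_{2}\phi_{1}}(x)=\alpha_{\phi_{1}}(x)+\alpha_{\phi_{2}}(\phi_{1}x)$, and the equality $\Phi_{\phi_{2}\phi_{1}}=\Phi_{\phi_{2}}\circ\Phi_{\phi_{1}}$ follows at once from the formula $\Phi_{\phi}(x,u)=(\phi x,\exp(2\pi i\alpha_{\phi}(x))\cdot u)$, exactly as in the group-action computation following Lemma~\ref{cociclo}.

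The only subtle point is the identification $\alpha^{\mathcal{H}}_{g}=\alpha_{h(g)}$: one must be confident that the discrete cocycle really depends only on the induced action of $\mathbb{Z}$ on $N$ and $P$, not on the ambient discrete group. This is ensured by the independence-of-$E$ statement in Lemma~\ref{independenciaEdiscreto}(b) together with the naturality of Proposition~\ref{subgroupDiscreto}. Once this is granted, the proof is essentially a one-line combination of Lemma~\ref{independenciaEdiscreto}(a) and Proposition~\ref{subgroupDiscreto}.
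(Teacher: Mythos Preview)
Your proposal is correct and follows essentially the same route as the paper: the paper's proof also passes through the free group $F_{2}$ on two generators mapped to $\phi_{1},\phi_{2}\in\mathcal{G}$, and then invokes Lemma~\ref{independenciaEdiscreto} together with Proposition~\ref{subgroupDiscreto} to transfer the discrete cocycle identity back to the individual $\mathbb{Z}$-actions. Your write-up is in fact slightly more explicit than the paper's, since you spell out the third homomorphism $\mathbb{Z}\to F_{2}$, $n\mapsto (ba)^{n}$, needed to identify $\alpha^{\mathcal{H}}_{ba}$ with $\alpha_{\phi_{2}\phi_{1}}$.
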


\begin{proof}
Given $\phi_{1}$, $\phi_{2}\in\mathcal{G}$ we consider the free group
$F_{2}[x_{1},x_{2}]$ generated by two elements $x_{1},x_{2}$. The assignment
$x_{1}\mapsto\phi_{1}$, $x_{2}\mapsto\phi_{2}$ defines and action of the
discrete group $F_{2}[x_{1},x_{2}]$ on $P\times N$. We have two homomorphisms
$h_{i}\colon\mathbb{Z}\rightarrow F_{2}[x_{1},x_{2}]$ determined by setting
$h_{i}(n)=x_{i}^{n}$. By applying Lemma \ref{independenciaEdiscreto} and
Proposition \ref{subgroupDiscreto} we obtain $\alpha_{\phi_{2}\phi_{1}%
}(x)=\alpha_{\phi_{1}}(x)+\alpha_{\phi_{2}}(\phi_{1}x)$ for any $x\in N$.
\end{proof}

\begin{lemma}
\label{PropContinuo}If $\phi_{t}\in\mathcal{G}$, $t\in(-\varepsilon
,\varepsilon)$ is a local $1$-parametric subgroup of $\mathcal{G}$ with
$\dot{\phi}_{t}=X\in\mathrm{Lie}\mathcal{G}$, then we have $\left.
\frac{d\alpha_{\phi_{t}}}{dt}\right\vert _{t=0}=\rho_{c}(X_{N})+\mu_{c}(X)$.
\end{lemma}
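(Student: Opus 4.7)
The plan is to split $\alpha_{\phi_t}(x)=\int_{\gamma_t}\rho_c-\xi_c^{\phi_t}(\pi_{\phi_t}\circ\vec{\gamma}_t)$ and differentiate the two pieces at $t=0$, using the natural choice $\gamma_t(s)=\phi_{ts}\cdot x$ so that $\vec{\gamma}_t(s)=(\phi_{ts}x,s)$. The first piece is handled by exactly the same one-line computation as in Lemma~\ref{cambiopunto}(c): since $\gamma_0$ is the constant curve at $x$ and $\frac{\partial}{\partial t}|_{t=0}\gamma_t'(s)=X_N(x)$ for every $s$, one gets $\left.\frac{d}{dt}\right|_{t=0}\int_{\gamma_t}\rho_c=\rho_c(X_N)(x)$.

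For the second piece, the key idea is to trivialize the $t$-dependent mapping torus. Since the path $\phi_{ts}$ ($s\in[0,1]$) joins $1_{\mathcal{G}}$ to $\phi_t$, the formula
\[
\tilde{\Psi}_t\colon P\times N\times S^1\longrightarrow(P\times N\times\mathbb{R})/\mathbb{Z}_{\phi_t},\qquad [(y,x,s)]\longmapsto[\phi_{ts}(y,x),s]
\]
defines a $G$-bundle isomorphism covering an analogous base map $\tilde{\psi}_t$, whose restriction to each fiber $C$ is the orientation-preserving diffeomorphism $\phi_{ts}$. Under $\tilde{\psi}_t^{-1}$ the loop $\pi_{\phi_t}\circ\vec{\gamma}_t$ becomes the fixed loop $\tilde{\gamma}_0(s)=(x,s)$ in $N\times S^1$, independent of $t$. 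The naturality of the Chern--Simons character together with Proposition~\ref{naturalityIntegral} then gives
\[
\xi_c^{\phi_t}(\pi_{\phi_t}\circ\vec{\gamma}_t)=\chi_{A_t}(c\times\tilde{\gamma}_0),\qquad A_t:=\tilde{\Psi}_t^{\ast}\underline{\mathbb{A}}_{\phi_t},
\]
so the problem reduces to differentiating a family of Chern--Simons characters of connections on the fixed bundle $P\times N\times S^1\to M\times N\times S^1$, evaluated on a fixed cycle.

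At this point Lemma~\ref{LieCS} applies. A direct calculation using $\mathcal{G}$-invariance of $\mathbb{A}$ yields $\dot{A}_0=v_{\mathbb{A}}(X)\cdot ds$, where $ds$ is the canonical $1$-form on the $S^1$ factor. Hence
\[
\left.\tfrac{d}{dt}\right|_{t=0}\chi_{A_t}(c\times\tilde{\gamma}_0)=r\int_{c\times S^1}p\bigl(v_{\mathbb{A}}(X)\,ds,\mathbb{F},\ldots,\mathbb{F}\bigr)=r\int_c p(v_{\mathbb{A}}(X),\mathbb{F},\ldots,\mathbb{F})=-\mu_c(X),
\]
the middle equality being Fubini over $S^1$. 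Subtracting this from the first piece produces $\rho_c(X_N)(x)+\mu_c(X)$, as required.

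The hardest step will be the careful verification that $\tilde{\Psi}_t$ is a fiber-orientation-preserving bundle isomorphism in the precise sense demanded by Proposition~\ref{naturalityIntegral}. This is essentially bookkeeping in case c$_1$, where $\mathcal{G}$ acts trivially on $M$ and the mapping tori over $M$ and $C$ are trivial, but needs more care in cases c$_2$--c$_3$, where the base mapping torus is itself nontrivial and one must track the interplay between the $\mathcal{G}$-action on $M$ and the trivialization.
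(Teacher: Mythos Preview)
Your proposal is correct and follows essentially the same route as the paper's own proof: the paper also chooses $\gamma^t(s)=\phi_{ts}x$, differentiates $\int_{\gamma^t}\rho_c$ directly, and then trivializes the family of mapping tori via the $\mathbb{Z}$-equivariant maps $W_t(y,x,s)=(\phi_{ts}y,\phi_{ts}x,s)$ from $P\times N_0$ to $P\times N_t$ (your $\tilde{\Psi}_t$ is precisely the quotient $\underline{W}_t$, with $N_0/\mathbb{Z}\cong N\times S^1$), pulls back to a family $A_t$ on the fixed bundle, computes $\dot{A}_0=(q^*v_{\mathbb{A}}(X))\,ds$ using $\mathcal{G}$-invariance of $\mathbb{A}$, and finishes with Lemma~\ref{LieCS} and Proposition~\ref{naturalityIntegral}. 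The only cosmetic difference is that the paper keeps the notation $N_0/\mathbb{Z}$ rather than identifying it with $N\times S^1$ at the outset.
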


\begin{proof}
Given $x\in N$, we define $\gamma(s)=\phi_{s}x$ and for $t\in(-\varepsilon
,\varepsilon)$, $\gamma^{t}(s)=\gamma(ts)$ and $\vec{\gamma}^{t}%
(s)=(\gamma^{t}(s),s)$. We have $\vec{\gamma}^{t}(0)=(\phi_{0}x,0)=(x,0)$ and
$\vec{\gamma}^{t}(1)=(\phi_{t}x,1)\sim_{\phi_{t}}(x,0)$. By definition we have
$\alpha_{\phi_{t}}(x)=\int\nolimits_{\gamma^{t}}\rho_{c}-\xi_{c}^{\phi_{t}%
}(\pi_{\phi_{t}}\circ\vec{\gamma}^{t}).$

The derivative of the first term is easy to compute, as we have $\int
\nolimits_{\gamma^{t}}\rho_{c}=\int\nolimits_{0}^{1}\rho_{c}(\dot{\gamma}%
^{t}(s))ds=\int\nolimits_{0}^{1}\rho_{c}(\dot{\gamma}(ts))tds\underset
{z=ts}{=}\int\nolimits_{0}^{t}\rho_{c}(\dot{\gamma}(z))dz$ and hence $\frac
{d}{dt}\left.  \int\nolimits_{\gamma^{t}}\rho_{c}\right\vert _{t=0}=\rho
_{c}(X_{N}(x))$.

Next we compute the derivative of the second term. We denote by $N_{t}$ the
manifold $N\times\mathbb{R}$ with the action of $\mathbb{Z}$ determined by
$n\cdot(x,s)=((\phi_{t})^{n}x,s+n)$. The maps $w_{t}\colon N_{0}\rightarrow
N_{t}$, $w_{t}(x,s)=(\phi_{ts}x,s)$, $W_{t}\colon P\times N_{0}\rightarrow
P\times N_{t}$, $W_{t}(y,x,s)=(\phi_{ts}y,\phi_{ts}x,s)$ for $y\in P,x\in N$
and $s\in\mathbb{R}$ are $\mathbb{Z}$-equivariant. We denote by $\underline
{w}_{t}\colon N_{0}/\mathbb{Z}\rightarrow N_{t}/\mathbb{Z}$ and $\underline
{W}_{t}\colon(P\times N_{0})/\mathbb{Z}\rightarrow(P\times N_{t})/\mathbb{Z}$
the induced maps. We define the connections $A_{t}=W_{t}^{\ast}q^{\ast
}\mathbb{A}$, and we have $A_{0}=q^{\ast}\mathbb{A}$ and $\dot{A}_{0}=\left.
\frac{d}{dt}\right\vert _{t=0}(W_{t}^{\ast}q^{\ast}\mathbb{A)}=L_{Y}q^{\ast
}\mathbb{A}$, where $Y=\left.  \frac{dW_{t}}{dt}\right\vert _{t=0}=sX_{P\times
N}$. As $q^{\ast}\mathbb{A}$ is $\mathcal{G}$-invariant we have $\dot{A}%
_{0}=L_{Y}q^{\ast}\mathbb{A}=sq^{\ast}(L_{X_{P\times N}}\mathbb{A})+q^{\ast
}(\iota_{X_{P\times N}}\mathbb{A)}ds=(q^{\ast}v_{\mathbb{A}}(X))ds$.

We denote by $\underline{A}_{t}$ the projection of $A_{t}$ on $(P\times
N_{0})/\mathbb{Z}$ and we define $\zeta_{c}^{t}=\int_{c}\chi_{\underline
{A}_{t}}\in\hat{H}^{2}(N_{0}/\mathbb{Z})$. In each of the cases c$_{1}$),
c$_{2}$) and c$_{3}$) by using Proposition \ref{naturalityIntegral} we obtain
$\zeta_{c}^{t}=\underline{w}_{t}^{\ast}(\xi_{c}^{\phi_{t}})$.

If $\varrho_{x}\colon I\rightarrow N_{0}$ is the curve $\varrho_{x}(s)=(x,s)$
then we have$\ \pi_{\phi_{t}}\circ\vec{\gamma}^{t}=\underline{w}_{t}\circ
\pi_{\phi_{0}}\circ\varrho_{x}$ and hence $\xi_{c}^{\phi_{t}}(\pi_{\phi_{t}%
}\circ\vec{\gamma}^{t})=\xi_{c}^{\phi_{t}}(\underline{w}_{t}\circ\pi_{\phi
_{0}}\circ\varrho_{x})=\zeta_{c}^{t}(\pi_{\phi_{0}}\circ\varrho_{x})$. By
using Lemma \ref{LieCS} we conclude that%
\begin{align*}
\tfrac{d}{dt}\left.  \xi_{c}^{\phi_{t}}(\pi_{\phi_{t}}\circ\vec{\gamma}%
^{t})\right\vert _{t=0}  &  =\tfrac{d}{dt}\left.  \zeta_{c}^{t}(\pi_{\phi_{0}%
}\circ\varrho_{x})\right\vert _{t=0}=r\int_{\varrho_{x}}\int_{c}p(\dot{A}%
_{0},q^{\ast}\mathbb{F},\overset{(r-1)}{\ldots},q^{\ast}\mathbb{F})\\
&  =\left(  r\int_{c}p(v_{\mathbb{A}}(X),\mathbb{F},\overset{(r-1)}{\ldots
},\mathbb{F})\right)  _{x}\left(  \int_{\varrho_{x}}ds\right)  =-\mu
_{c}(X)_{x},
\end{align*}
and the result follows.
\end{proof}

As a consequence of the preceding Lemma we have the following

\begin{proposition}
\label{LiiftVector}For $X\in\mathrm{Lie}\mathcal{G}$ we have $X_{\mathcal{U}%
_{c}}=X_{N}+2\pi i(\rho_{c}(X_{N})+\mu_{c}(X))\partial_{\theta}$.
\end{proposition}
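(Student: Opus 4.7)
The plan is to read off the generator by differentiating the explicit formula for the lifted action at $t=0$, and then invoke Lemma \ref{PropContinuo} to identify the vertical component. Because the construction of Section \ref{SectionFirst} produces the lifted action as $\Phi_\phi(x,u)=(\phi x,\exp(2\pi i\alpha_\phi(x))\cdot u)$, the proposition essentially reduces to combining this formula with the computation of $\left.\tfrac{d}{dt}\right|_{t=0}\alpha_{\phi_t}$ already done in Lemma \ref{PropContinuo}.

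First I would fix $X\in\mathrm{Lie}\mathcal{G}$ and a local $1$-parameter subgroup $\phi_t$ with $\phi_0=1_{\mathcal{G}}$ and $\dot\phi_0=X$. Applying $\Phi_{\phi_t}$ to $(x,u)\in\mathcal{U}_c=N\times U(1)$ and differentiating at $t=0$ gives the $N$-component $X_N(x)$ by definition of the fundamental vector field on $N$. For the $U(1)$-component, differentiating $\exp(2\pi i\alpha_{\phi_t}(x))\cdot u$ in $t$ and using the normalization $\theta(\partial_\theta)=1$ for the Maurer--Cartan convention fixed in Section \ref{notation} yields the vertical contribution $2\pi i\left.\tfrac{d\alpha_{\phi_t}(x)}{dt}\right|_{t=0}\partial_\theta$. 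This intermediate identity
\[
X_{\mathcal{U}_c}=X_N+2\pi i\left.\tfrac{d\alpha_{\phi_t}(x)}{dt}\right|_{t=0}\partial_\theta
\]
is exactly the same computation that appears in the proof of Theorem \ref{Prop}, with our $\alpha_\phi$ replacing the one built there from $\lambda$ and $\chi$.

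Second, I would feed in Lemma \ref{PropContinuo}, which asserts precisely that $\left.\tfrac{d\alpha_{\phi_t}(x)}{dt}\right|_{t=0}=\rho_c(X_N)(x)+\mu_c(X)(x)$. Substituting this into the intermediate identity immediately produces the claimed formula $X_{\mathcal{U}_c}=X_N+2\pi i(\rho_c(X_N)+\mu_c(X))\partial_\theta$.

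There is essentially no obstacle left at this stage: all of the real content sits inside Lemma \ref{PropContinuo}, whose proof handles the two nontrivial pieces separately, namely the derivative of $\int_{\gamma^t}\rho_c$ (producing $\rho_c(X_N)$) and the derivative of $\xi_c^{\phi_t}(\pi_{\phi_t}\circ\vec\gamma^{\,t})$ via a variational formula for the Chern--Simons character (producing $\mu_c(X)$ through Lemma \ref{LieCS}). Once that lemma is in hand, the present proposition is a one-line consequence of the definition of the lifted action, so I would keep the written proof correspondingly short, essentially just quoting Lemma \ref{PropContinuo}.
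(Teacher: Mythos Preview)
Your proposal is correct and matches the paper's approach exactly: the paper simply states that Proposition \ref{LiiftVector} is a consequence of the preceding Lemma \ref{PropContinuo}, and you have spelled out the obvious differentiation of $\Phi_{\phi_t}(x,u)=(\phi_t x,\exp(2\pi i\alpha_{\phi_t}(x))\cdot u)$ at $t=0$ followed by substitution of that lemma. There is nothing to add.
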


We have $\mathrm{curv}\Xi_{c}=d\rho_{c}=\sigma_{c}$ and $\iota_{X_{\mathcal{U}%
_{c}}}\Xi_{c}=2\pi i\mu_{c}(X)$. Hence $\mathrm{curv}_{\mathcal{G}}(\Xi
_{c})=\sigma_{c}+\mu_{c}=\varpi_{c}$. We conclude that $(\mathcal{U}_{c}%
,\Xi_{c})$ is a $\mathcal{G}$-equivariant prequantization bundle of
$(N,\varpi_{c})$. Hence we have proved the following

\begin{theorem}
\label{TeoremaA} Let $(p,\Upsilon)\in\mathcal{I}_{\mathbb{Z}}^{r}(G)$, $A_{0}$
be a background connection\ on $P$ and $c\colon C\rightarrow M$ a
$\mathcal{G}$-invariant map. These data determine an action of $\mathcal{G}$
on $\mathcal{U}_{c}=N\times U(1)\rightarrow N$ by $U(1)$-bundle automorphisms
$\Phi_{\phi}(x,u)\!=\!(\phi x,\exp(2\pi i\alpha_{\phi}(x))\!\cdot u)$ such
that the connection $\Xi_{c}=\theta-2\pi i\rho_{c}$ is $\mathcal{G}%
$-invariant, and the $\mathcal{G}$-equivariant curvature of $\Xi_{c}$ is
$\mathrm{curv}_{\mathcal{G}}(\Xi_{c})=\varpi_{c}$.

For every $X\in\mathrm{Lie}\mathcal{G}$ we have $X_{\mathcal{U}_{c}}%
=X_{N}+2\pi i(\rho_{c}(X_{N})+\mu_{c}(X))\partial_{\theta}$..

If $c=\partial u$ for a $\mathcal{G}$-equivariant map $u\colon U\rightarrow M$
and $s_{u}=-%
{\textstyle\int\nolimits_{u}}
Tp(\mathbb{A},\overline{A}_{0})$, then $\alpha_{\phi}(x)=s_{u}(\phi
x)-s_{u}(x)$. Hence $S_{u}=\exp(2\mathrm{\pi}i\cdot s_{u})$ determines a
$\mathcal{G}$-equivariant section of $\mathcal{U}_{c}\rightarrow N$. Moreover,
we have $\nabla^{\Xi_{c}}S_{u}=-2\mathrm{\pi}i\sigma_{u}\cdot S_{u}$, where
$\sigma_{u}=\int_{u}p(\mathbb{F})\in\Omega^{1}(N)$.
\end{theorem}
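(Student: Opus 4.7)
The plan is to assemble Theorem \ref{TeoremaA} from the building blocks already established in Sections \ref{SectionDiscrete} and \ref{SectionFirst}, since almost every ingredient has been proved individually. The only thing missing is to verify that these pieces fit together coherently and to extend the last statement (about $S_u$) from the discrete setting to the general one.

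First I would recall that for each $\phi \in \mathcal{G}$ the cocycle $\alpha_\phi \colon N \to \mathbb{R}/\mathbb{Z}$ was defined in Section \ref{SectionFirst} by applying the discrete-group construction of Section \ref{SectionDiscrete} to the cyclic subgroup $\langle \phi \rangle \simeq \mathbb{Z}$, with universal bundle $\mathbf{E}\mathbb{Z} = \mathbb{R}$. The formula for $\Phi_\phi$ is then $\Phi_\phi(x,u) = (\phi x, \exp(2\pi i \alpha_\phi(x)) \cdot u)$. The fact that each $\Phi_\phi$ is a $U(1)$-bundle automorphism and that $\Xi_c = \theta - 2\pi i \rho_c$ is fixed by $\Phi_\phi$ is immediate from Proposition \ref{PropIntCSDiscreto copy(1)} applied to $\langle \phi \rangle$. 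The multiplicativity $\Phi_{\phi_2 \phi_1} = \Phi_{\phi_2} \circ \Phi_{\phi_1}$ is exactly Proposition \ref{GroupAction}, so $\Phi$ is a left action of $\mathcal{G}$.

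Next I would compute the equivariant curvature. By definition, $\mathrm{curv}_{\mathcal{G}}(\Xi_c)(X) = \mathrm{curv}(\Xi_c) - \frac{i}{2\pi}\iota_{X_{\mathcal{U}_c}} \Xi_c$. The first term is $d\rho_c = \sigma_c$ by equation (\ref{dRho}). For the second term I would invoke Proposition \ref{LiiftVector}, which asserts $X_{\mathcal{U}_c} = X_N + 2\pi i(\rho_c(X_N) + \mu_c(X))\partial_\theta$; plugging into $\Xi_c = \theta - 2\pi i \rho_c$ gives $\iota_{X_{\mathcal{U}_c}} \Xi_c = 2\pi i(\rho_c(X_N) + \mu_c(X)) - 2\pi i \rho_c(X_N) = 2\pi i \mu_c(X)$. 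Thus $\mathrm{curv}_{\mathcal{G}}(\Xi_c)(X) = \sigma_c + \mu_c(X) = \varpi_c(X)$, as required. Proposition \ref{LiiftVector} itself rests on Lemma \ref{PropContinuo}, which is already in hand.

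For the final claim about $c = \partial u$, I would mimic verbatim the argument used in the discrete case just after Proposition \ref{PropIntCSDiscreto copy(1)}. The key calculation there shows $\alpha_\phi(x) = s_u(\phi x) - s_u(x)$ by combining (\ref{deltau}), (\ref{backgr1}) and (\ref{dRho}) along any curve $\gamma$ from $x$ to $\phi x$, and it used only the cyclic subgroup $\langle \phi \rangle$. Therefore the same computation goes through in the present general setting for each individual $\phi$, yielding the $\mathcal{G}$-equivariance of $S_u = \exp(2\pi i s_u)$. The covariant derivative then follows from $\nabla^{\Xi_c} S_u = (dS_u - 2\pi i \rho_c \cdot S_u)$ together with $ds_u = \rho_c - \sigma_u$ from the argument in Section \ref{SectionDiscrete}.

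The main obstacle, and the one I would be most careful about, is verifying that the discrete-case definition of $\alpha_\phi$ is what makes sense in the Lie-group case, so that Propositions \ref{GroupAction} and \ref{LiiftVector} can indeed be assembled without gaps: in particular that $X_{\mathcal{U}_c}$ has the stated formula in the $\partial_\theta$ direction, which requires the smoothness in $t$ of $\alpha_{\phi_t}$ supplied by Lemma \ref{PropContinuo}. Beyond this, the theorem is a genuine corollary of the lemmas already proved.
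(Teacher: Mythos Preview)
Your proposal is correct and follows essentially the same route as the paper: the theorem is assembled from Proposition \ref{PropIntCSDiscreto copy(1)}, Proposition \ref{GroupAction}, Lemma \ref{PropContinuo}/Proposition \ref{LiiftVector}, and the discrete-case computation for $c=\partial u$, with the equivariant curvature computed exactly as you describe. The paper presents Theorem \ref{TeoremaA} as a summary of what has already been proved, and your reconstruction matches this.
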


\begin{remark}
\emph{In place of the principal }$U(1)$\emph{-bundle }$\mathcal{U}_{c}$,\emph{
we can consider the }$\mathcal{G}$\emph{-equivariant Hermitian line bundle
}$\mathcal{L}_{c}=N\times\mathbb{C}\rightarrow N$\emph{ with the action}
$\Phi_{\phi}(x,z)=(\phi x,\exp(2\pi i\alpha_{\phi}(x))\cdot z)$ \emph{and}
$\Xi_{c}$\emph{ determines a hermitian connection }$\nabla^{\Xi_{c}}$\emph{ on
this bundle with }$\nabla^{\Xi_{c}}f=df-2\pi i\rho_{c}\cdot f$\emph{\ for
}$f\colon N\rightarrow\mathbb{C}$\emph{.}\bigskip
\end{remark}

Let $N_{1}$ be another connected and simply connected manifold in which
$\mathcal{G}$ acts. If $g\colon N_{1}\rightarrow N$ is a $\mathcal{G}%
$-equivariant map then $\mathbb{A}_{1}=(\mathrm{id}_{P}\times g)^{\ast
}\mathbb{A}$ is a $\mathcal{G}$-invariant connection on $P\times
N_{1}\rightarrow M\times N_{1}$ and conditions a) b) and c) are satisfied. If
$A_{0}$ is a background connection then we have a cocycle $\alpha_{1}$. The
following Proposition can be easily proved.

\begin{proposition}
\label{Subconjunto}We have $(\alpha_{1})_{\phi}(x)=\alpha_{\phi}(g(x))$,
$(\Phi_{1})_{\phi}(x,u)=\Phi_{\phi}(g(x),u)$ and $(\Xi_{1})_{c}=(g\times
\mathrm{id}_{U(1)})^{\ast}\Xi_{c}$ for $\phi\in\mathcal{G}$, $x\in N^{\prime}$
and $u\in U(1)$. In particular, the map $g\times\mathrm{id}_{U(1)}%
\colon(\mathcal{U}_{1})_{c}\rightarrow\mathcal{U}_{c}$ is $\mathcal{G}$-equivariant.
\end{proposition}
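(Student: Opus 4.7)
The plan is to chase the explicit formulas from the construction in Section \ref{SectionFirst} through the pullback by $g$, using naturality of the Chern-Simons character and of fibre integration.

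First I would record how the data on $M \times N_1$ is related to that on $M \times N$. Since $\mathbb{A}_1 = (\mathrm{id}_P \times g)^\ast \mathbb{A}$, the transgression form satisfies $Tp(\mathbb{A}_1, \overline{A}_{0,1}) = (\mathrm{id}_M \times g)^\ast Tp(\mathbb{A}, \overline{A}_0)$, where $\overline{A}_{0,1} = (\mathrm{id}_P \times g)^\ast \overline{A}_0$. Integrating over $c$ and using that $(\mathrm{id}_M \times g)$ restricted to $C \times N_1$ is $c \times g$ then gives $(\rho_1)_c = g^\ast \rho_c$. This immediately yields $(\Xi_1)_c = \theta - 2\pi i (\rho_1)_c = (g \times \mathrm{id}_{U(1)})^\ast(\theta - 2\pi i \rho_c) = (g \times \mathrm{id}_{U(1)})^\ast \Xi_c$, which is the third claim.

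Next I would establish the cocycle identity $(\alpha_1)_\phi(x) = \alpha_\phi(g(x))$. Fix $\phi \in \mathcal{G}$ and a curve $\gamma_1$ on $N_1$ joining $x$ and $\phi x$; set $\gamma = g \circ \gamma_1$, which joins $g(x)$ and $\phi\, g(x) = g(\phi x)$ by $\mathcal{G}$-equivariance of $g$. The map $g$ induces a $\mathbb{Z}$-equivariant map $g \times \mathrm{id}_{\mathbb{R}} \colon N_1 \times \mathbb{R} \to N \times \mathbb{R}$, and an analogous bundle morphism $(\mathrm{id}_P \times g \times \mathrm{id}_{\mathbb{R}}) \colon P \times N_1 \times \mathbb{R} \to P \times N \times \mathbb{R}$ that descends to a morphism of principal $G$-bundles over the $\mathbb{Z}$-quotients; under this morphism the projected tautological connection $\underline{\mathbb{A}}_{1,\phi}$ pulls back from $\underline{\mathbb{A}}_\phi$. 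By naturality of the Chern-Simons differential character and of fibre integration (Proposition \ref{naturalityIntegral}) the induced map $\underline{g}_\phi \colon (N_1 \times \mathbb{R})/\mathbb{Z} \to (N \times \mathbb{R})/\mathbb{Z}$ satisfies $\xi_{c,1}^\phi = \underline{g}_\phi^\ast \xi_c^\phi$. Combining with $(\rho_1)_c = g^\ast \rho_c$, the defining formula for $(\alpha_1)_\phi(x)$ becomes
\[
(\alpha_1)_\phi(x) = \int_{\vec{\gamma}_1} g^\ast \rho_c - \xi_c^\phi(\pi_\phi \circ (g \times \mathrm{id}_{\mathbb{R}}) \circ \vec{\gamma}_1) = \int_{\vec{\gamma}} \rho_c - \xi_c^\phi(\pi_\phi \circ \vec{\gamma}) = \alpha_\phi(g(x)).
\]

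Finally, the formula for the action is $\Phi_\phi(y,u) = (\phi y, \exp(2\pi i\, \alpha_\phi(y)) \cdot u)$, so the identity $(\alpha_1)_\phi(x) = \alpha_\phi(g(x))$ together with $g(\phi x) = \phi\, g(x)$ gives $(\Phi_1)_\phi(x,u) = \Phi_\phi(g(x),u)$. The $\mathcal{G}$-equivariance of $g \times \mathrm{id}_{U(1)}$ is then immediate. The only genuine content is the naturality step identifying $\xi_{c,1}^\phi$ with $\underline{g}_\phi^\ast \xi_c^\phi$; this is where one must be careful that $g \times \mathrm{id}_\mathbb{R}$ and the induced bundle map really satisfy the hypotheses of Proposition \ref{naturalityIntegral} (i.e., the relevant morphism over the quotient of $N_1$ is an isomorphism of $M$-fibre bundles preserving fibrewise orientation), and this is the main point to check; everything else is a direct substitution in the explicit formulas.
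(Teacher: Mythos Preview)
The paper does not actually prove this proposition; it merely states ``The following Proposition can be easily proved.'' Your argument supplies exactly the natural details: the identity $(\rho_1)_c = g^\ast\rho_c$ from naturality of the transgression form, and the identification $\xi_{c,1}^\phi = \underline{g}_\phi^\ast \xi_c^\phi$ via naturality of the Chern--Simons character together with Proposition~\ref{naturalityIntegral}. This is correct and is the intended argument.

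One small notational slip: in your displayed chain you write $\int_{\vec{\gamma}_1} g^\ast\rho_c$ and $\int_{\vec{\gamma}} \rho_c$, but $g^\ast\rho_c$ and $\rho_c$ live on $N_1$ and $N$, not on $N_1\times\mathbb{R}$ and $N\times\mathbb{R}$; these should read $\int_{\gamma_1} g^\ast\rho_c$ and $\int_{\gamma}\rho_c$ (or be pulled back by $q_1^\ast$, $q^\ast$ first, as the paper does). This does not affect the substance of the proof.
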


\subsection{Change of background connection}

The prequantization bundle $\mathcal{U}_{c}$ and the\ connection $\Xi_{c}$ are
defined using a background connection $A_{0}$. If $A_{0}^{\prime}$ is another
background connection then we have $Tp(\mathbb{A},\overline{A}_{0}^{\prime
})=Tp(\mathbb{A},\overline{A}_{0})+Tp(\overline{A}_{0},\overline{A}%
_{0}^{\prime})+dTp(\mathbb{A},\overline{A}_{0},\overline{A}_{0}^{\prime}%
),$\ with $Tp(\overline{A}_{0},\overline{A}_{0}^{\prime})=\mathrm{pr}%
_{1}^{\ast}Tp(A_{0},A_{0}^{^{\prime}})\in\Omega^{2r-1,0}(M\times N)$, and
hence
\begin{multline*}%
{\textstyle\int_{u}}
Tp(\mathbb{A},\overline{A}_{0}^{\prime})=%
{\textstyle\int_{u}}
Tp(\mathbb{A},\overline{A}_{0})+%
{\textstyle\int_{u}}
Tp(\overline{A}_{0},\overline{A}_{0}^{\prime})+%
{\textstyle\int_{u}}
dTp(\mathbb{A},\overline{A}_{0},\overline{A}_{0}^{\prime})\\
=%
{\textstyle\int_{u}}
Tp(\mathbb{A},\overline{A}_{0})+%
{\textstyle\int_{u}}
Tp(\overline{A}_{0},\overline{A}_{0}^{\prime})+d%
{\textstyle\int_{u}}
Tp(\mathbb{A},\overline{A}_{0},\overline{A}_{0}^{\prime})+(-1)^{d}\!%
{\textstyle\int_{\partial u}}
Tp(\mathbb{A},\overline{A}_{0},\overline{A}_{0}^{\prime}).
\end{multline*}
If $d=\!\dim U<2r-1$ we have $%
{\textstyle\int_{u}}
Tp(\overline{A}_{0},\overline{A}_{0}^{\prime})=0$ as $Tp(\overline{A}%
_{0},\overline{A}_{0}^{\prime})\!\in\!\Omega^{2r-1,0}(M\times N)$ and hence
\begin{equation}%
{\textstyle\int_{u}}
Tp(\mathbb{A},\overline{A}_{0}^{\prime})=%
{\textstyle\int_{u}}
Tp(\mathbb{A},\overline{A}_{0})+d%
{\textstyle\int_{u}}
Tp(\mathbb{A},\overline{A}_{0},\overline{A}_{0}^{\prime})+(-1)^{d}%
{\textstyle\int_{\partial u}}
Tp(\mathbb{A},\overline{A}_{0},\overline{A}_{0}^{\prime}). \label{backgr2}%
\end{equation}
Moreover, if $\dim U=2r-1$ then $%
{\textstyle\int_{u}}
Tp(\mathbb{A},\overline{A}_{0},\overline{A}_{0}^{\prime})=0$ and we have%
\begin{equation}%
{\textstyle\int_{u}}
Tp(\mathbb{A},\overline{A}_{0}^{\prime})=%
{\textstyle\int_{u}}
Tp(\mathbb{A},\overline{A}_{0})+%
{\textstyle\int_{u}}
Tp(A_{0},A_{0}^{\prime})-%
{\textstyle\int_{\partial u}}
Tp(\mathbb{A},\overline{A}_{0},\overline{A}_{0}^{\prime}). \label{backgr3}%
\end{equation}

The next Proposition shows that the action changes under a change of $A_{0}$,
but the corresponding prequantization bundles are isomorphic.

\begin{proposition}
\label{cambio back}Let $A_{0}^{\prime}$ be another background connection and
denote by $\mathcal{U}_{c}^{\prime},\Xi_{c}^{\prime}$ and $\alpha_{c}^{\prime
}$ the bundle, connection and action determined by $A_{0}^{\prime}$. If we
define $\beta_{c}=\int_{c}Tp(\mathbb{A},\overline{A}_{0},\overline{A}%
_{0}^{\prime})\in\Omega^{0}(N)$ then $\Xi_{c}^{\prime}=\Xi_{c}-2\pi
id\beta_{c}$ and $\alpha_{\phi}^{\prime}=\alpha_{\phi}+\phi^{\ast}\beta-\beta
$. The map $\Psi\colon\mathcal{U}_{c}\rightarrow\mathcal{U}_{c}^{\prime}$
$\Psi(x,u)=(x,\exp(2\pi i\beta_{c}(x)\cdot u)$ is a $\mathcal{G}$-equivariant
isomorphism of $U(1)$-bundles and $\Psi^{\ast}(\Xi_{c}^{\prime})=\Xi_{c}$.
\end{proposition}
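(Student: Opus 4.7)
The proof reduces to a careful application of the secondary transgression identity
\[
Tp(\mathbb{A},\overline{A}_0') = Tp(\mathbb{A},\overline{A}_0) + Tp(\overline{A}_0,\overline{A}_0') + dTp(\mathbb{A},\overline{A}_0,\overline{A}_0'),
\]
together with the observation that all the data used to build the bundle and the cocycle, except for $\rho_c$, are independent of the background connection $A_0$. First I would integrate the above identity over $c$ and invoke equation (\ref{backgr2}) with $u=c$: since $\dim c = 2r-2$ and $\partial c = 0$, both the middle term (which lives in bigraduation $(2r-1,0)$) and the Stokes boundary contribution vanish, leaving $\rho_c' = \rho_c + d\beta_c$. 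Substituting into $\Xi_c' = \theta - 2\pi i\rho_c'$ yields $\Xi_c' = \Xi_c - 2\pi i\, d\beta_c$, which is the first assertion.

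For the cocycle identity, I would use the explicit formula
\[
\alpha_\phi(x) = \int_\gamma \rho_c - \xi_c^\phi(\pi_\phi\circ\vec\gamma),
\]
and similarly for $\alpha_\phi'$. The key point is that the integrated Chern-Simons character $\xi_c^\phi = \int_c \chi_{\underline{\mathbb{A}}_\phi}$ is intrinsic to $\underline{\mathbb{A}}_\phi$ and so does not depend on the background connection; hence the difference collapses to
\[
\alpha_\phi'(x) - \alpha_\phi(x) = \int_\gamma (\rho_c' - \rho_c) = \int_\gamma d\beta_c = \beta_c(\phi x) - \beta_c(x) = (\phi^*\beta_c - \beta_c)(x),
\]
as claimed.

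Finally, I would verify that $\Psi(x,u) = (x, \exp(2\pi i\beta_c(x))\cdot u)$ is a $\mathcal{G}$-equivariant $U(1)$-bundle isomorphism intertwining the two connections. Equivariance is a direct computation: applying $\Psi$ after $\Phi_\phi$ and $\Phi_\phi'$ after $\Psi$ both yield $(\phi x, \exp(2\pi i(\alpha_\phi(x)+\beta_c(\phi x)))\cdot u)$, using the cocycle relation proved in the previous step. For the connection, since $\Psi$ acts as multiplication by $g(x) = \exp(2\pi i\beta_c(x))$ on the $U(1)$-fibre, one has $\Psi^*\theta = \theta + g^{-1}dg = \theta + 2\pi i\, d\beta_c$, and consequently
\[
\Psi^*\Xi_c' = \Psi^*\theta - 2\pi i\rho_c' = \theta + 2\pi i\, d\beta_c - 2\pi i(\rho_c + d\beta_c) = \theta - 2\pi i\rho_c = \Xi_c.
\]

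The entire argument is essentially a bookkeeping exercise; the only mild subtlety is the bigraduation vanishing that eliminates the $Tp(\overline{A}_0,\overline{A}_0')$ contribution when passing from the transgression identity to equation (\ref{backgr2}), but this has already been recorded in the paper. The conceptual content lies entirely in recognizing that $\xi_c^\phi$ is background-free, so all of the $A_0$-dependence is concentrated in the exact form $d\beta_c$, yielding a gauge transformation of the prequantization datum.
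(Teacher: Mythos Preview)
Your proposal is correct and follows exactly the approach indicated by the paper, whose proof reads in its entirety: ``It follows easily from the definitions and the equality $\rho_{c}^{\prime}=\rho_{c}+d\beta_{c}$, which is a consequence of equation (\ref{backgr2}).'' You have simply unpacked those definitions explicitly---in particular, the observation that $\xi_c^{\phi}$ is independent of $A_0$ so that the difference of the cocycles reduces to $\int_\gamma d\beta_c$, and the routine gauge-transformation computation for $\Psi^{\ast}\Xi_c'$---which is precisely what the paper leaves to the reader.
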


\begin{proof}
It follows easily from the definitions and the equality $\rho_{c}^{\prime
}=\rho_{c}+d\beta_{c}$, which is a consequence of equation (\ref{backgr2}).
\end{proof}

\begin{remark}
\label{trivialization}\emph{We interpret this result in the following way.
}$(p,\Upsilon)\in I_{\mathbb{Z}}^{r}(G)$\emph{ and }$c\colon C\rightarrow
M$\emph{ determine a }$\mathcal{G}$\emph{-equivariant prequantization bundle
}$(\mathcal{U}_{c},\Xi_{c})$ \emph{for }$(N,\varpi_{c})$\emph{, and a
background connection }$A_{0}$\emph{ determines a global trivialization of
this bundle. In this sense, the prequantization bundle does not depend on
}$A_{0}$\emph{.}
\end{remark}

The situation is different for the section associated to the Chern-Simons
action, as using equation (\ref{backgr3}) we obtain the following

\begin{proposition}
If $c=\partial u$ for a $\mathcal{G}$-equivariant map $u\colon U\rightarrow M$
and $S_{u}$, $S_{u}^{\prime}$ are the sections associated to $A_{0}$ and
$A_{0}^{\prime}$\ then $\Psi\circ S_{u}=S_{u}^{\prime}\cdot\exp(2\pi i\int
_{u}Tp(A_{0},A_{0}^{\prime}))$.
\end{proposition}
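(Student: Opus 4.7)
The plan is to unravel both sides of the claimed identity into explicit exponentials of fiber integrals and reduce the statement to an application of equation (\ref{backgr3}), which is the change-of-background formula for $\int_{u}Tp(\mathbb{A},\overline{A}_{0})$ when $\dim U=2r-1$.

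First I would write out the two sides as maps $N\to U(1)$. By the description of $\Psi$ in Proposition \ref{cambio back} and the formula $S_{u}=\exp(-2\pi i\int_{u}Tp(\mathbb{A},\overline{A}_{0}))$ from Theorem \ref{TeoremaA}, the left-hand side evaluated at $x\in N$ becomes
\[
(\Psi\circ S_{u})(x)=\exp\!\Bigl(2\pi i\bigl(\beta_{c}(x)-\textstyle\int_{u}Tp(\mathbb{A},\overline{A}_{0})\bigr)\Bigr),
\]
where $\beta_{c}=\int_{c}Tp(\mathbb{A},\overline{A}_{0},\overline{A}_{0}^{\prime})$. The right-hand side becomes
\[
S_{u}^{\prime}(x)\cdot\exp\!\Bigl(2\pi i\textstyle\int_{u}Tp(A_{0},A_{0}^{\prime})\Bigr)=\exp\!\Bigl(2\pi i\bigl(-\textstyle\int_{u}Tp(\mathbb{A},\overline{A}_{0}^{\prime})+\textstyle\int_{u}Tp(A_{0},A_{0}^{\prime})\bigr)\Bigr).
\]
So the claim is equivalent to the scalar identity
\[
\textstyle\int_{u}Tp(\mathbb{A},\overline{A}_{0}^{\prime})=\textstyle\int_{u}Tp(\mathbb{A},\overline{A}_{0})+\textstyle\int_{u}Tp(A_{0},A_{0}^{\prime})-\beta_{c},
\]
holding at every $x\in N$ (and a priori only modulo $\mathbb{Z}$, but actually as genuine real numbers since both sides are smooth functions on $N$).

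Next I would invoke equation (\ref{backgr3}), which gives exactly this identity once one recognises that $\partial u=c$ and that $\beta_{c}=\int_{\partial u}Tp(\mathbb{A},\overline{A}_{0},\overline{A}_{0}^{\prime})$. The $d\int_{u}Tp(\mathbb{A},\overline{A}_{0},\overline{A}_{0}^{\prime})$ term that appears in the intermediate equation (\ref{backgr2}) is absent here because $\dim U=2r-1$ forces $\int_{u}Tp(\mathbb{A},\overline{A}_{0},\overline{A}_{0}^{\prime})=0$, as already noted in the derivation of (\ref{backgr3}). Substituting everything back yields the desired equality of $U(1)$-valued maps, and hence the equality of $\mathcal{G}$-equivariant sections of $\mathcal{U}_{c}^{\prime}\to N$.

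There is no serious obstacle; the only point requiring mild care is keeping track of signs and of which bidegree components of $Tp(\overline{A}_{0},\overline{A}_{0}^{\prime})$ and $Tp(\mathbb{A},\overline{A}_{0},\overline{A}_{0}^{\prime})$ survive fiber integration over $u$, but this is exactly what was already done in deriving (\ref{backgr3}) and can simply be quoted. The proof then consists of assembling the three definitions ($S_{u}$, $\Psi$, $\beta_{c}$), rewriting both sides as exponentials, and citing (\ref{backgr3}).
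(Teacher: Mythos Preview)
Your proof is correct and follows exactly the approach indicated by the paper, which simply says the result follows from equation (\ref{backgr3}). You have unpacked the definitions of $S_{u}$, $S_{u}^{\prime}$, $\Psi$, and $\beta_{c}$ and reduced the claim to precisely that equation, which is all that is needed.
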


Hence the section $S_{u}$ is not intrinsically\ determined by $(p,\Upsilon
)\in\mathcal{I}_{\mathbb{Z}}^{r}(G)$ and $u$. To explain this, note that if
$S$ is a section satisfying $\nabla^{\Xi_{c}}S=-2\mathrm{\pi}i\sigma_{u}\cdot
S$, any other section satisfying this condition is given by $\exp(ia)S$ for
$a\in\mathbb{R}$ constant. The background connection $A_{0}$ determines a
constant $a$ and another connection $A_{0}^{\prime}$ determines a different
constant $a^{\prime}$, and hence a different section.

\subsection{Change of polynomial and submanifold}

The action of $\mathcal{G}$ on $\mathcal{A}\times U(1)$ is defined by a map
$\Phi_{\alpha}(x,u)=(\phi x,\exp(2\pi i\alpha_{\phi}(x))\cdot u)$ where
$\alpha\colon\mathcal{G}\times N\rightarrow\mathbb{R}/\mathbb{Z}$ satisfies
the cocycle condition $\alpha_{\phi_{2}\phi_{1}}(x)=\alpha_{\phi_{1}%
}(x)+\alpha_{\phi_{2}}(\phi_{1}x)$. If $\alpha$ and $\alpha^{\prime}$ satisfy
the cocycle condition, then it is also satisfied by $-\alpha$ and
$\alpha+\alpha^{\prime}$, and $\Phi_{-\alpha}=\Phi_{\alpha}^{-1}$ and
$\Phi_{\alpha+\alpha^{\prime}}=\Phi_{\alpha}\cdot\Phi_{\alpha^{\prime}}$. In
terms of line bundles, if $\mathcal{L}^{\alpha}$ is the $\mathcal{G}%
$-equivariant\ line bundle associated to a cocycle $\alpha$, then
$\Phi_{-\alpha}$ corresponds to the dual bundle $\mathcal{L}^{-\alpha
}=(\mathcal{L}^{\alpha})^{\ast}$ and $\Phi_{\alpha+\alpha^{\prime}}$
corresponds to the tensor product $\mathcal{L}^{\alpha+\alpha^{\prime}%
}=\mathcal{L}^{\alpha}\otimes\mathcal{L}^{\alpha^{\prime}}$.

We denote by $\alpha_{c}^{\vec{p}}$ the action determined by $\vec
{p}=(p,\Upsilon)\in\mathcal{I}_{\mathbb{Z}}^{r}(G)$, $c\colon C\rightarrow N$
and by $(\mathcal{L}_{c}^{\vec{p}},\nabla_{c}^{\vec{p}})$ the $\mathcal{G}%
$-equivariant line bundle and connection determined by them. If $c\colon
C\rightarrow N$, $c^{\prime}\colon C^{\prime}\rightarrow N$ are two smooth
maps we define $-c\colon-C\rightarrow N$, where $-C$ is the manifold $C$ with
the opposite orientation and $c+c^{\prime}\colon C\sqcup C^{\prime}\rightarrow
N$. Then we have $\alpha_{-c}^{\vec{p}}=-\alpha_{c}^{\vec{p}}$ ,
$\alpha_{c+c^{\prime}}^{\vec{p}}=\alpha_{c}^{\vec{p}}+\alpha_{c^{\prime}%
}^{\vec{p}}$ and also $\rho_{-c}^{\vec{p}}=-\rho_{c}^{\vec{p}}$,
$\rho_{c+c^{\prime}}^{\vec{p}}=\rho_{c}^{\vec{p}}+\rho_{c^{\prime}}^{\vec{p}}%
$. We conclude that $(\mathcal{L}_{-c}^{\vec{p}},\nabla_{-c}^{\vec{p}%
})=((\mathcal{L}_{c}^{\vec{p}},\nabla_{c}^{\vec{p}}))^{\ast}$ and
$(\mathcal{L}_{c+c^{\prime}}^{\vec{p}},\nabla_{c+c^{\prime}}^{\vec{p}%
})=(\mathcal{L}_{c}^{\vec{p}},\nabla_{c}^{\vec{p}})\otimes(\mathcal{L}%
_{c^{\prime}}^{\vec{p}},\nabla_{c^{\prime}}^{\vec{p}})$.

In a similar way if $\vec{p}=(p,\Upsilon)$, $\vec{p}^{\prime}=(p^{\prime
},\Upsilon^{\prime})\in\mathcal{I}_{\mathbb{Z}}^{r}(G)$ then we have
$\alpha_{c}^{-\vec{p}}=-\alpha_{c}^{\vec{p}}$, $\alpha_{c}^{\vec{p}+\vec
{p}^{\prime}}=\alpha_{c}^{\vec{p}}+\alpha_{c}^{\vec{p}^{\prime}}$ and
$\rho_{c}^{-\vec{p}}=-\rho_{c}^{\vec{p}}$, $\rho_{c}^{\vec{p}+\vec{p}^{\prime
}}=\rho_{c}^{\vec{p}}+\rho_{c}^{\vec{p}^{\prime}}$. Hence $(\mathcal{L}%
_{c}^{-\vec{p}},\nabla_{c}^{-\vec{p}})=((\mathcal{L}_{c}^{\vec{p}},\nabla
_{c}^{\vec{p}}))^{\ast}\mathcal{\ }$and $(\mathcal{L}_{c}^{\vec{p}+\vec
{p}^{\prime}},\nabla_{c}^{\vec{p}+\vec{p}^{\prime}})=(\mathcal{L}_{c}^{\vec
{p}},\nabla_{c}^{\vec{p}})\otimes(\mathcal{L}_{c}^{\vec{p}^{\prime}}%
,\nabla_{c}^{\vec{p}^{\prime}})$.

If $\partial u=c-c^{\prime}$, by Theorem \ref{TeoremaA} $S_{u}=\exp
(-2\mathrm{\pi}i\cdot%
{\textstyle\int\nolimits_{u}}
Tp(\mathbb{A},\overline{A}_{0}))$ determines a $\mathcal{G}$-equivariant
section of unitary norm of $\mathcal{L}_{c-c^{\prime}}^{\vec{p}}%
\simeq\mathcal{L}_{c}^{\vec{p}}\otimes(\mathcal{L}_{c^{\prime}}^{\vec{p}%
})^{\ast}\simeq\mathrm{Hom}(\mathcal{L}_{c^{\prime}}^{\vec{p}},\mathcal{L}%
_{c}^{\vec{p}})$ and hence $\mathcal{L}_{c^{\prime}}^{\vec{p}}$ and
$\mathcal{L}_{c}^{\vec{p}}$ are isomorphic as $\mathcal{G}$-equivariant line bundles.

\begin{remark}
\emph{It is important to recall that in the preceding formulas we are using
the same background connection }$A_{0}$\emph{ (i.e. the same trivialization
(see Remark \ref{trivialization})) for all the bundles. If we use different
connections }$A_{0}$\emph{ and }$A_{0}^{\prime}$\emph{ for }$c$\emph{ and
}$-c$\emph{, we do not have} $\mathcal{L}_{c}^{A_{0}}=(\mathcal{L}_{-c}%
^{A_{0}^{\prime}})^{\ast}$\emph{, but we have a pairing }$\mathcal{L}%
_{c}^{A_{0}}\otimes(\mathcal{L}_{-c}^{A_{0}^{\prime}})^{\ast}\rightarrow
N\times\mathbb{C}$\emph{.}
\end{remark}

\section{Application to the space of connections}

In this section the constructions of Section \ref{SectionFirst} are applied to
the space of connections on a principal $G$-bundle $P\rightarrow M$ . First we
give the explicit expressions of the forms that appear in the prequantization
bundle. A background connection is simply an element $A_{0}\in\mathcal{A}$.
The $1$-form $\rho_{c}=\int_{c}Tp(\mathbb{A},\overline{A}_{0})\in\Omega
^{1}(\mathcal{A})$ that determines the connection $\Xi_{c}$ is given by
\[
(\rho_{c})_{A}(a)=r(r-1)\int_{c}\int_{0}^{1}p(a,A-A_{0},F_{t},\overset
{(r-2)}{\ldots},F_{t})tdt,
\]
with $A_{t}=tA+(1-t)A_{0}$ and $F_{t}$ the curvature of $A_{t}$.

The form $\beta_{c}=\int_{c}Tp(\mathbb{A},\overline{A}_{0},\overline{A}%
_{0}^{^{\prime}})$ that appears in the change of background connection is
simply given by $\beta_{c}(A)=\int_{c}Tp(A,A_{0},A_{0}^{^{\prime}})$ (this
follows from the tautological definition of $\mathbb{A)}$. Finally, if
$u\colon U\rightarrow M$ is a $\mathcal{G}$-invariant map such that
$c=\partial u$ then $s_{u}=-%
{\textstyle\int\nolimits_{u}}
Tp(\mathbb{A},\overline{A}_{0})$ is given by $s_{u}(A)=-%
{\textstyle\int\nolimits_{u}}
Tp(A,A_{0})$. Also we have $(\sigma_{u})_{A}(a)=r\int_{u}p(a,F,\overset
{(2r-1)}{\ldots},F)$ for $a\in T_{A}\mathcal{A}\simeq\Omega^{1}(M,\mathrm{ad}%
P)$.

\subsection{Action by gauge transformations}

Now we consider the action of the group $\mathcal{G}=\mathrm{Gau}P$ of gauge
transformations on $P\rightarrow M$. In this case, as $\mathcal{G}$ does not
act on $M$, we can consider any smooth map $c\colon C\rightarrow M$ with $\dim
C=2r-2$. We summarize the results in the following

\begin{theorem}
\label{TeoremaAGau}Let $P\rightarrow M$ principal $G$-bundle, $(p,\Upsilon
)\in\mathcal{I}_{\mathbb{Z}}^{r}(G)$, $A_{0}$ be a background connection\ on
$P$ and $c\colon C\rightarrow M$ be a smooth map with $C$ closed, oriented and
$\dim C=2r-2$. If $\mathcal{G}$ acts on $P$ by elements of $\mathrm{Gau}P$,
these data determine an action of $\mathcal{G}$ on $\mathcal{U}_{c}%
=\mathcal{A}\times U(1)\rightarrow\mathcal{A}$ by $U(1)$-bundle automorphisms
such that the connection $\Xi_{c}=\theta-2\pi i\rho_{c}$ is $\mathcal{G}%
$-invariant and the equivariant curvature of $\Xi_{c}$ is $\varpi_{c}$.

Furthermore, if $c=\partial u$ and $s_{u}(A)=-%
{\textstyle\int\nolimits_{u}}
Tp(A,A_{0})$, then $\alpha_{\phi}(x)=s_{u}(\phi x)-s_{u}(x)$. Hence
$S_{u}=\exp(2\mathrm{\pi}i\cdot s_{u})$ determines $\mathcal{G}$-equivariant
section of $\mathcal{U}_{c}\rightarrow\mathcal{A}$ and we have $\nabla
^{\Xi_{c}}S_{u}=-2\mathrm{\pi}i\sigma_{u}\cdot S_{u}$.
\end{theorem}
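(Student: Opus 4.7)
The plan is to obtain Theorem \ref{TeoremaAGau} as a direct specialization of the general Theorem \ref{TeoremaA}, so the main work is checking that the hypotheses a), b), c) of Section \ref{SectionFirst} hold in this setting and that the specialized formulas given immediately before the statement are consistent with the general ones.

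First I would take $N=\mathcal{A}$. Since $\mathcal{A}$ is an affine space modelled on $\Omega^{1}(M,\mathrm{ad}P)$, it is connected and simply connected, so the standing assumption on $N$ in Section \ref{SectionFirst} is satisfied. Next I would verify that the tautological connection $\mathbb{A}$ on $\mathbb{P}=P\times\mathcal{A}\rightarrow M\times\mathcal{A}$ is $\mathrm{Gau}P$-invariant; this is standard and was recalled in Section \ref{SectionConnections}. The map $c\colon C\rightarrow M$ is automatically $\mathcal{G}$-equivariant because $\mathrm{Gau}P$ acts trivially on $M$ and we may let $\mathcal{G}$ act trivially on $C$, so we are in case c$_{1}$). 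A background connection is simply an element $A_{0}\in\mathcal{A}$, giving $\overline{A}_{0}=\mathrm{pr}_{1}^{\ast}A_{0}$.

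Having checked the hypotheses, Theorem \ref{TeoremaA} produces the action $\Phi_{\phi}(A,u)=(\phi A,\exp(2\pi i\alpha_{\phi}(A))\cdot u)$ on $\mathcal{U}_{c}=\mathcal{A}\times U(1)$, the $\mathcal{G}$-invariant connection $\Xi_{c}=\theta-2\pi i\rho_{c}$, and gives $\mathrm{curv}_{\mathcal{G}}(\Xi_{c})=\varpi_{c}$. I would then only need to verify that the explicit expressions stated just before Theorem \ref{TeoremaAGau} agree with those of the general construction. For $\rho_{c}=\int_{c}Tp(\mathbb{A},\overline{A}_{0})$, this reduces to computing $Tp(\mathbb{A},\overline{A}_{0})$ pointwise: using the definition $\mathbb{A}_{(x,A)}(X,Y)=A_{x}(X)$ and the fact that $\overline{A}_{0}$ is the pullback of $A_{0}$, the straight-line interpolation $\mathbb{A}_{t}=t\mathbb{A}+(1-t)\overline{A}_{0}$ has curvature $\mathbb{F}_{t}$ whose pullback to $\{x\}\times\mathcal{A}$ recovers $F_{t}=tF_{A}+(1-t)F_{A_{0}}+\tfrac{1}{2}t(t-1)[A-A_{0},A-A_{0}]$, so the standard transgression formula yields the stated integral expression for $(\rho_{c})_{A}(a)$.

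For the Chern-Simons section case $c=\partial u$, I would apply the second part of Theorem \ref{TeoremaA} directly: $s_{u}=-\int_{u}Tp(\mathbb{A},\overline{A}_{0})$, which by the tautological formula for $\mathbb{A}$ evaluates pointwise to $s_{u}(A)=-\int_{u}Tp(A,A_{0})$. The cocycle identity $\alpha_{\phi}(A)=s_{u}(\phi A)-s_{u}(A)$, the resulting $\mathcal{G}$-equivariant section $S_{u}=\exp(2\pi i\cdot s_{u})$, and the formula $\nabla^{\Xi_{c}}S_{u}=-2\pi i\sigma_{u}\cdot S_{u}$ then all follow from the corresponding statements in Theorem \ref{TeoremaA} with no further computation. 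The only mildly delicate point, which I expect to be the main obstacle, is ensuring that the gauge group action on $\mathcal{A}$ qualifies for the construction: here the discrete-group argument of Section \ref{SectionDiscrete} must go through, which requires that $P\times\mathcal{A}\times E\rightarrow(P\times\mathcal{A}\times E)/\mathbb{Z}$ is a principal $G$-bundle for each cyclic subgroup generated by $\phi\in\mathrm{Gau}P$; this is automatic because $\mathbb{Z}$ acts freely and properly on $P\times\mathcal{A}\times\mathbb{R}$ via the $\mathbb{R}$ factor regardless of whether $\phi$ acts freely on $\mathcal{A}$. With this checked, Theorem \ref{TeoremaAGau} follows.
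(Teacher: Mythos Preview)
Your proposal is correct and follows essentially the same approach as the paper: Theorem \ref{TeoremaAGau} is not given a separate proof there but is presented as a summary (``We summarize the results in the following'') obtained by specializing the general Theorem \ref{TeoremaA} to $N=\mathcal{A}$ with $\mathcal{G}\subset\mathrm{Gau}P$ in case c$_{1}$), after first recording the explicit formulas for $\rho_{c}$, $\beta_{c}$, $s_{u}$ and $\sigma_{u}$. Your additional remark that the $\mathbb{Z}$-action on $P\times\mathcal{A}\times\mathbb{R}$ is automatically free and proper via the $\mathbb{R}$ factor is a helpful clarification the paper leaves implicit.
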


\begin{remark}
\emph{In the classical case of Chern-Simons theory considered in the
Introduction, any }$SU(2)$\emph{-bundle }$P$\emph{ over a 3-manifold is
trivial. Hence we can take }$A_{0}$ \emph{the connection corresponding to a
global section }$u\colon M\rightarrow P$. \emph{Then for }$p(X)=\frac{1}%
{8\pi^{2}}\mathrm{tr}(X^{2})$\emph{ we have}$\ s_{M}(A)=-%
{\textstyle\int\nolimits_{M}}
Tp(A,A_{0})=-\frac{1}{8\pi^{2}}\int_{M}\mathrm{tr}(A\wedge dA+\frac{2}%
{3}A\wedge A\wedge A)$\emph{, which coincides with the classical Chern-Simons
action.}
\end{remark}

\begin{remark}
\label{RSWdef}\emph{In \cite{RSW} the equation }$\alpha_{\phi}(A)=s_{u}(\phi
A)-s_{u}(A)=-%
{\textstyle\int\nolimits_{u}}
Tp(\phi A,A_{0})+%
{\textstyle\int\nolimits_{u}}
Tp(A,A_{0})$ \emph{is used to define the action }$\alpha_{\phi}$\emph{. To do
this it\ is necessary to express the manifold }$c\,\ $\emph{as the boundary of
another manifold }$u$\emph{ and to extend the connections on }$c$\emph{ to
}$u$\emph{. This can be done in dimension two, but this procedure cannot be
generalized to higher dimensions.}
\end{remark}

In \cite{flat} it is used\ a different construction of a bundle based in
Theorem \ref{Prop}. If $\mathcal{G}$ is the subgroup of gauge transformations
fixing a point of $P$, then $\mathcal{G}$ acts freely on $\mathcal{A}$ and
$\mathcal{A}\rightarrow\mathcal{A}/\mathcal{G}$ is a principal $\mathcal{G}%
$-bundle (see \cite{Donaldson}). If $\mathfrak{A}$ is a connection on
$\mathcal{A}\rightarrow\mathcal{A}/\mathcal{G}$ we define $\mathbb{A}%
(\mathfrak{A})\in\Omega^{1}(P\times\mathcal{A},\mathfrak{g})$ by
$\mathbb{A}(\mathfrak{A})(Y)=\mathbb{A}((\mathfrak{A}((\mathrm{pr}_{2})_{\ast
}\pi_{\ast}Y))_{P\times\mathcal{A}})$ for $Y\in T(P\times\mathcal{A})$. Then
the connection $\mathbb{A}-\mathbb{A}(\mathfrak{A})$ is projectable onto a
connection $\underline{\mathfrak{A}}$ on $(P\times\mathcal{A})/\mathcal{G}%
\rightarrow M\times\mathcal{A}/\mathcal{G}$. If we set $\lambda_{c}=\rho
_{c}+\mu_{c}(\mathfrak{A})$, then we have (see \cite{flat}) $d\lambda_{c}%
=\pi^{\ast}(\int_{c}p(F_{\underline{\mathfrak{A}}}))=\mathrm{curv}(\int
_{c}\chi_{\underline{\mathfrak{A}}})$. Hence we can apply Theorem \ref{Prop}
to the character $\int_{c}\chi_{\underline{\mathfrak{A}}}\in\hat{H}%
^{2}(\mathcal{A}/\mathcal{G})$ and we obtain a cocycle $a_{\phi}%
^{\mathfrak{A}}(A)=\int\nolimits_{\gamma}(\rho_{c}+\mu_{c}(\mathfrak{A}%
))-(\int_{c}\chi_{\underline{\mathfrak{A}}})(\pi\circ\gamma)$ that in theory
determines another bundle. But this bundle coincides with ours, as we have the following

\begin{proposition}
\label{Igualdad copy(1)}We have $a_{\phi}^{\mathfrak{A}}=\alpha_{\phi}$ for
any $\phi\in\mathcal{G}$.
\end{proposition}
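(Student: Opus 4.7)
The plan is to write $a_\phi^{\mathfrak{A}}$ and $\alpha_\phi$ explicitly in parallel and reduce the equality to a single pointwise transgression identity. The two cocycles come from Theorem \ref{Prop} applied to different settings: $\alpha_\phi$ uses the discrete group $\mathbb{Z}=\langle\phi\rangle$ acting on $\mathcal{A}\times\mathbb{R}$, with character $\xi_c^\phi=\int_c\chi_{\underline{\mathbb{A}}_\phi}$ and form $\rho_c$, while $a_\phi^{\mathfrak{A}}$ uses the full $\mathcal{G}$-action on $\mathcal{A}$, with character $\int_c\chi_{\underline{\mathfrak{A}}}$ and form $\lambda_c=\rho_c+\mu_c(\mathfrak{A})$. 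To bridge the two I would use the naturality of the Chern--Simons character together with Proposition \ref{naturalityIntegral}.

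First I would construct the natural bundle morphism $F\colon(P\times\mathcal{A}\times\mathbb{R})/\mathbb{Z}\to(P\times\mathcal{A})/\mathcal{G}$, $[y,A,s]_{\mathbb{Z}}\mapsto[y,A]_{\mathcal{G}}$, covering $\underline{f}\colon(M\times\mathcal{A}\times\mathbb{R})/\mathbb{Z}\to M\times\mathcal{A}/\mathcal{G}$ and $g\colon(\mathcal{A}\times\mathbb{R})/\mathbb{Z}\to\mathcal{A}/\mathcal{G}$. The pullbacks to $P\times\mathcal{A}\times\mathbb{R}$ satisfy $\overline{q}^{\ast}\underline{\mathbb{A}}_\phi=\overline{q}^{\ast}\mathbb{A}$ and $\overline{q}^{\ast}F^{\ast}\underline{\mathfrak{A}}=\overline{q}^{\ast}(\mathbb{A}-\mathbb{A}(\mathfrak{A}))$, so $\underline{\mathbb{A}}_\phi$ and $F^{\ast}\underline{\mathfrak{A}}$ are two connections on the same principal $G$-bundle over $(M\times\mathcal{A}\times\mathbb{R})/\mathbb{Z}$. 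By equation (\ref{AAprima}) combined with the naturality $\chi_{F^{\ast}\underline{\mathfrak{A}}}=\underline{f}^{\ast}\chi_{\underline{\mathfrak{A}}}$,
\[
\chi_{\underline{\mathbb{A}}_\phi}=\underline{f}^{\ast}\chi_{\underline{\mathfrak{A}}}+\varsigma\bigl(Tp(\underline{\mathbb{A}}_\phi,F^{\ast}\underline{\mathfrak{A}})\bigr),
\]
and fiber integration over $c$, using Proposition \ref{naturalityIntegral} and $\int_c\varsigma(\beta)=\varsigma(\int_c\beta)$, gives
\[
\xi_c^\phi=g^{\ast}\!\left(\int_c\chi_{\underline{\mathfrak{A}}}\right)+\varsigma\!\left(\int_c Tp(\underline{\mathbb{A}}_\phi,F^{\ast}\underline{\mathfrak{A}})\right).
\]

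Evaluating at the cycle $\pi_\phi\circ\vec{\gamma}$, using $g\circ\pi_\phi\circ\vec{\gamma}=\pi\circ\gamma$ and lifting the transgression integral through the covering projection to $\mathcal{A}\times\mathbb{R}$, the difference of the two cocycle definitions collapses to
\[
a_\phi^{\mathfrak{A}}(A)-\alpha_\phi(A)=\int_\gamma\!\left[\mu_c(\mathfrak{A})+\int_c Tp(\mathbb{A},\mathbb{A}-\mathbb{A}(\mathfrak{A}))\right]\!.
\]
Since this must vanish for every curve $\gamma$ from $A$ to $\phi A$ in the connected manifold $\mathcal{A}$, it suffices to establish the pointwise identity
\[
\mu_c(\mathfrak{A})+\int_c Tp(\mathbb{A},\mathbb{A}-\mathbb{A}(\mathfrak{A}))=0 \qquad\text{in }\Omega^{1}(\mathcal{A}).
\]

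The main obstacle is this last identity. At $A\in\mathcal{A}$ and $a\in T_A\mathcal{A}$, one checks from the definition that $\mathbb{A}(\mathfrak{A})_{(y,A)}(a)=v_A(\mathfrak{A}_A(a))|_y$, so $\mathbb{A}(\mathfrak{A})$ sits in bidegree $(0,1)$ on $M\times\mathcal{A}$. In the transgression integrand $r\int_0^1 p(\mathbb{A}(\mathfrak{A}),F_t,\overset{(r-1)}{\ldots},F_t)\,dt$, extracting the bidegree $(d,1)$ component forces each $F_t$ to contribute only its $(2,0)$-part, which is $F_A$ independently of $t$. The $t$-integral becomes trivial and the integrand collapses to $r\,p(v_A(\mathfrak{A}_A(a)),F_A,\overset{(r-1)}{\ldots},F_A)$ integrated over $c$, exactly cancelling $\mu_c(\mathfrak{A})_A(a)=-r\int_c p(v_A(\mathfrak{A}_A(a)),F_A,\overset{(r-1)}{\ldots},F_A)$ from the explicit formula in Section \ref{SectionConnections}. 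This bidegree reduction, and the careful sign bookkeeping for the transgression, is the only non-routine step; everything else is naturality and unwinding definitions.
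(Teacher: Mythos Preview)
Your proposal is correct and follows essentially the same route as the paper's own proof. Both arguments use the natural map $(P\times\mathcal{A}\times\mathbb{R})/\mathbb{Z}\to(P\times\mathcal{A})/\mathcal{G}$ (your $F$, the paper's $\overline{Z}$), compare the two connections on the $\mathbb{Z}$-quotient via equation~(\ref{AAprima}), and reduce the resulting transgression term $\int_c Tp(\mathbb{A},\mathbb{A}-\mathbb{A}(\mathfrak{A}))$ to $-\mu_c(\mathfrak{A})$ by the same bidegree argument (only the $(2,0)$-component of $F_t$ survives, and it equals $\mathbb{F}^{2,0}$ independently of $t$); the sole difference is that the paper applies (\ref{AAprima}) first and naturality of $\chi$ second, while you fold them into a single step.
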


\begin{proof}
We define the projections $\overline{q}\colon P\times\mathcal{A}%
\times\mathbb{R}\rightarrow P\times\mathcal{A}$. The homomorphism
$h\colon\mathbb{Z\rightarrow}\mathcal{G}$ $h(n)=\phi^{n}$ determines an action
of $\mathbb{Z}$ on $P$ and $\mathcal{A}$.

Let $\pi_{\mathcal{G}}\colon P\times\mathcal{A}\rightarrow(P\times
\mathcal{A})/\mathcal{G}$ and $\pi_{\phi}\colon P\times\mathcal{A}%
\times\mathbb{R}\rightarrow(P\times\mathcal{A}\times\mathbb{R})/\mathbb{Z}$
denote the projections. If $\phi\in\mathcal{G}$, $x\in N$, $\gamma$ is a curve
on $N$ joining $A$ and $\phi A$ and $\vec{\gamma}(s)=(\gamma(s),s)$ by
definition we have%
\begin{align*}
\alpha_{\phi}(x)  &  =\int\nolimits_{\gamma}\rho_{c}-\int_{c}\chi
_{\underline{\overline{q}^{\ast}\mathbb{A}}_{\phi}}(\pi_{\phi}\circ\vec
{\gamma}),\\
a_{\phi}^{\mathfrak{A},\mathcal{G}}(x)  &  =\int\nolimits_{\gamma}\rho
_{c}+\int\nolimits_{\gamma}\mu_{c}(\mathfrak{A}))-(%
{\textstyle\int_{c}}
\chi_{\underline{\mathfrak{A}}})(\pi_{\mathcal{G}}\circ\gamma).
\end{align*}
The connections $\overline{q}^{\ast}\mathbb{A}$ and $\overline{q}^{\ast}%
\pi_{\mathcal{G}}^{\ast}\underline{\mathfrak{A}}$ are $\mathbb{Z}$-invariant
and project onto connections $\underline{\overline{q}^{\ast}\mathbb{A}}$ and
$A_{2}=\underline{\overline{q}^{\ast}\pi_{\mathcal{G}}^{\ast}\underline
{\mathfrak{A}}}$ on $(P\times\mathcal{A}\times\mathbb{R})/\mathbb{Z}%
\rightarrow(M\times\mathcal{A}\times\mathbb{R})/\mathbb{Z}$ and by equation
(\ref{AAprima}) we have $\int_{c}\chi_{\underline{\overline{q}^{\ast
}\mathbb{A}}}(\pi_{\phi}\circ\vec{\gamma})=\int_{c}\chi_{A_{2}}(\pi_{\phi
}\circ\vec{\gamma})+\int_{\vec{\gamma}}\int_{c}Tp(\overline{q}^{\ast
}\mathbb{A},\overline{q}^{\ast}\pi_{\mathcal{G}}^{\ast}\underline
{\mathfrak{A}})$. But $\int_{\vec{\gamma}}\int_{c}Tp(\overline{q}^{\ast
}\mathbb{A},\overline{q}^{\ast}\pi_{\mathcal{G}}^{\ast}\underline
{\mathfrak{A}})=\int_{\gamma}\int_{c}Tp(\mathbb{A},\pi_{\mathcal{G}}^{\ast
}\underline{\mathfrak{A}})=\int_{\gamma}\int_{c}Tp(\mathbb{A},\pi
_{\mathcal{G}}^{\ast}\underline{\mathfrak{A}})^{2r-2,1}$, where the
bigaduation is the induced by the product structure on $M\times\mathcal{A}$.
We have $Tp(\mathbb{A},\pi_{\mathcal{G}}^{\ast}\underline{\mathfrak{A}%
})^{2r-2,1}\!=\!r\!\int_{0}^{1}p(\mathbb{A(\mathfrak{A})},F_{t},\ldots,F_{t})$
with $F_{t}\!=\!\mathbb{F}\!+\!td_{\mathbb{A}}\mathbb{A(\mathfrak{A}%
)\!+}\!\frac{t^{2}}{2}[\mathbb{A(\mathfrak{A})},\mathbb{A(\mathfrak{A})}]$. As
$\mathbb{\mathfrak{A}}$ comes from a connection on $\mathcal{A}\rightarrow
\mathcal{A}/\mathcal{G}$ we have $F_{t}^{2,0}=\mathbb{F}^{2,0}$ and hence
$\int_{\gamma}\int_{c}Tp(\mathbb{A},\pi_{\mathcal{G}}^{\ast}\underline
{\mathfrak{A}})=r\int_{\gamma}\int_{c}p(\mathbb{A(\mathfrak{A})}%
,\mathbb{F},\ldots,\mathbb{F})=-\int_{\gamma}\mu_{c}(\mathbb{\mathfrak{A}})$.
We conclude that we have $\alpha_{\phi}(x)=\int\nolimits_{\gamma}\rho_{c}%
+\int_{\gamma}\mu_{c}(\mathbb{\mathfrak{A}})-\int_{c}\chi_{A_{2}}(\pi_{\phi
}\circ\vec{\gamma})$.

Hence we should prove that $\int_{c}\chi_{A_{2}}(\pi_{\phi}\circ\vec{\gamma
})=\int_{c}\chi_{\underline{\mathfrak{A}}}(\pi_{\mathcal{G}}\circ\gamma)$.
This result follows in a similar way as in the proof of Proposition
\ref{subgroupDiscreto}. If $\overline{Z}\colon(P\times\mathcal{A}%
\times\mathbb{R}\mathbf{)}/\mathbb{Z}\rightarrow(P\times\mathcal{A}%
)/\mathcal{G}$ and $z\colon(\mathcal{A}\times\mathbb{R}\mathbf{)}%
/\mathbb{Z}\rightarrow\mathcal{A}/\mathcal{G}$ are the natural maps, then we
have $A_{2}=\overline{Z}^{\ast}\underline{\mathfrak{A}}$ and $\int_{c}%
\chi_{A_{2}}=z^{\ast}\int_{c}\chi_{\underline{\mathfrak{A}}}$. Hence
$(\int_{c}\chi_{A_{2}})(\pi_{\phi}\circ\vec{\gamma})=(\int_{c}\chi
_{\underline{\mathfrak{A}}})(z\circ\pi_{\phi}\circ\vec{\gamma})=(\int_{c}%
\chi_{\underline{\mathfrak{A}}})(\pi_{\mathcal{G}}\circ\gamma).$
\end{proof}

In particular, the action $a_{\phi}^{\mathfrak{A}}$ does not depend on the
connection $\mathfrak{A}$ chosen on $\mathcal{A}\rightarrow\mathcal{A}%
/\mathcal{G}$.

\subsection{Restriction to the moduli space of irreducible flat
connections\label{Irreducible}}

We denote by $\widetilde{\mathcal{A}}$ the space of irreducible connections.
Although $\mathrm{Gau}P$ does not act freely on $\widetilde{\mathcal{A}}$, the
isotropy group is the same $Z(G)$ (the center of $G$) for all $A\in
\widetilde{\mathcal{A}}$, and $\widetilde{\mathcal{A}}/\mathrm{Gau}P$ is a
differential manifold. If we define the group $\widetilde{\mathcal{G}%
}=\mathrm{Gau}P/Z(G)$ then $\widetilde{\mathcal{G}}$ acts freely on
$\widetilde{\mathcal{A}}$ and $\widetilde{\mathcal{A}}\rightarrow
\widetilde{\mathcal{A}}/\widetilde{\mathcal{G}}$ is a principal $\widetilde
{\mathcal{G}}$-bundle (see for example \cite{Donaldson} for details). In the
preceding section we have constructed a $\mathrm{Gau}P$-equivariant
prequantization bundle $\mathcal{U}_{c}\rightarrow\mathcal{A}$. If we restrict
it to $\widetilde{\mathcal{U}}_{c}=\widetilde{\mathcal{A}}\times
U(1)\rightarrow\widetilde{\mathcal{A}}$ we hope that it will define a
prequantization bundle over $\widetilde{\mathcal{A}}/\widetilde{\mathcal{G}}$,
but there is a problem: the action of $Z(G)$ on $\widetilde{\mathcal{U}}_{c}$
does not need to be trivial and $\widetilde{\mathcal{G}}$ does not act on
$\widetilde{\mathcal{U}}_{c}$. Or, in an equivalent way, $\widetilde
{\mathcal{U}}_{c}/\mathrm{Gau}P\rightarrow\widetilde{\mathcal{A}}%
/\mathrm{Gau}P$ is not a $U(1)$-bundle. \ If the action of $Z(G)$ on
$\mathcal{U}_{c}$\ is trivial then $\widetilde{\mathcal{G}}$ acts on
$\mathcal{U}_{c}$, and restricting $\widetilde{\mathcal{A}}$ to $\widetilde
{\mathcal{U}}_{c}/\widetilde{\mathcal{G}}\rightarrow\widetilde{\mathcal{A}%
}/\widetilde{\mathcal{G}}$ we obtain a bundle over the moduli space of
irreducible connections. This is the case for the trivial $SU(2)$-bundle over
a surface, as it is shown in \cite{RSW}. If the action of $Z(G)$ on
$\mathcal{U}_{c}$\ is not trivial, we can define $\widetilde{G}=G/Z(G)$ and
$\widetilde{P}=P/Z(G)\rightarrow M$, which is a principal $\widetilde{G}%
$-bundle. We also set $\widetilde{\mathbb{P}}=\mathbb{(}P/Z(G))\times
\widetilde{\mathcal{A}}$ which is also a principal $\widetilde{G}$-bundle. The
connection $\mathbb{A}\in\Omega^{1}(\mathbb{P},\mathfrak{g})$ induces a
connection $\widetilde{\mathbb{A}}$ on $\widetilde{\mathbb{P}}$ which is
invariant under the action of $\widetilde{\mathcal{G}}$ (see \cite{flat} for
details). The results of Section \ref{SectionFirst} can be applied to the
bundle $\widetilde{P}=P/Z(G)\rightarrow M,$ $N=\widetilde{\mathcal{A}}$ and
the $\widetilde{\mathcal{G}}$-invariant connection $\widetilde{\mathbb{A}}$ on
$\widetilde{\mathbb{P}}$, and we obtain a result analogous to Theorem
\ref{TeoremaAGau}, but we should take polynomials and characteristic classes
of $\widetilde{G}$ in place of $G$. If $(p,\Upsilon)\in\mathcal{I}%
_{\mathbb{Z}}^{r}(\tilde{G})$ and $c\colon C\rightarrow M$ with $\dim C=2r-2$,
we obtain $\varpi_{c}\in\Omega_{\widetilde{\mathcal{G}}}^{2}(\widetilde
{\mathcal{A}})$ and a $\widetilde{\mathcal{G}}$-equivariant prequantization
bundle $(\widetilde{\Xi}_{c}$ $\widetilde{\mathcal{U}}_{c})$ of $(\widetilde
{\mathcal{A}},\varpi_{c})$, and taking the quotient a $U(1)$-bundle
$\widetilde{\mathcal{U}}_{c}/\widetilde{\mathcal{G}}\rightarrow\widetilde
{\mathcal{A}}/\widetilde{\mathcal{G}}$.

We consider only one example. If $G=SU(2)$ then $\widetilde{G}=SO(3)$. Both
groups have the same Lie algebra $\mathfrak{su}(2)\simeq\mathfrak{so}(3)$. As
they are connected, they have the same Weil polynomials $I(SU(2))=I(SO(3))$,
but $I_{\mathbb{Z}}(SO(3))\varsubsetneq I_{\mathbb{Z}}(SU(2))$. For example
the second Chern polynomial $c_{2}\notin I_{\mathbb{Z}}(SO(3))$, but the first
Pontryagin polynomial $p_{1}=4c_{2}\in I_{\mathbb{Z}}(SO(3))$ (see
\cite[Formula 4.11]{DW}). If $c\colon C\rightarrow M$ is a map with $C$ a
closed surface, the pre-symplectic structure $\underline{\sigma}_{c}$ on the
moduli space of irreducible flat connections determined by $c$ and the second
Chern class may not be prequantizable. But $4\cdot\underline{\sigma}_{c}$ is
always prequantizable by the bundle associated to the first Pontryagin class.

Let $(p,\Upsilon)\in\mathcal{I}_{\mathbb{Z}}^{r}(\widetilde{G})$. If
$\widetilde{\mathcal{F}}\subset\widetilde{\mathcal{A}}\ $is the space of
irreducible flat connections, for $r\geq2$ we have $\widetilde{\mathcal{F}%
}\subset\mu_{c}^{-1}(0)$. In particular the restriction to $\widetilde
{\mathcal{F}}\times U(1)$ of the form $\Xi_{c}$ is $\mathcal{G}$-basic.
$\Xi_{c}$ projects onto a connection on $\widetilde{\mathcal{F}}%
/\widetilde{\mathcal{G}}\times U(1)\rightarrow\widetilde{\mathcal{F}%
}/\widetilde{\mathcal{G}}$ and we obtain a prequantization bundle of
$(\widetilde{\mathcal{F}}/\widetilde{\mathcal{G}},\underline{\sigma}_{c})$,
where $\underline{\sigma}_{c}$ is obtained from $\varpi_{c}$ by symplectic
reduction. For $r=2$ and $C=M$ a closed oriented surface, we obtain
$(\sigma_{M})_{A}(a,b)=2\int_{M}p(a,b)$, $(\mu_{M})_{A}(X)=-2\int_{M}p(X,F)$
and $(\rho_{M})_{A}(a)=\int_{M}p(A-A_{0},a)$, for $A\in\mathcal{A}$, $a,b\in
T_{A}\mathcal{A}\simeq\Omega^{1}(M,\mathrm{ad}P)$ and $X\in\mathrm{Lie}%
\mathcal{G}$. If $p\colon\mathfrak{g}\times\mathfrak{g}\rightarrow\mathbb{R}$
is a non-degenerate bilinear form, then $\sigma_{M}$ is a symplectic form and
the moment map can be identified with the curvature map $A\mapsto F$. Hence
they coincide with the symplectic structure and moment map defined in
\cite{AB1}. As commented in Remark \ref{RSWdef} in this case our bundle also
coincides with that of \cite{RSW}, and the connection $\Xi_{M}$ projects onto
a connection on the quotient bundle $(\widetilde{\mathcal{F}}\times
U(1))/\widetilde{\mathcal{G}}\rightarrow\widetilde{\mathcal{F}}/\widetilde
{\mathcal{G}}$. If $J$ is a complex structure on $M$, it induces a complex
structure on $\mathcal{A}$ and $\sigma_{M}$ is of type $(1,1)$. As
$\nabla^{\underline{\Xi}_{M}}$ is a unitary connection we conclude (see
\cite{Donaldson}) that it determines a holomorphic structure on $\underline
{\mathcal{L}}_{M}\rightarrow\widetilde{\mathcal{F}}/\widetilde{\mathcal{G}}$.
We have similar results when $\dim M>2$ and $c\colon C\rightarrow M$ is a map
with $\dim C=2$. If $c=\partial u$, the restriction of $S_{u}$ to
$\widetilde{\mathcal{F}}$ is a $\Xi_{c}$-parallel section as it satisfies
$\nabla^{\Xi_{c}}S_{u}=0$ because $(\sigma_{u})_{A}(a)=2\int_{u}p(a,F)=0$ if
$A\in\mathcal{F}$.

If $r\geq3$ we have $\sigma_{c}|_{\mathcal{F}}=0$, and in this case
$\underline{\Xi}_{c}$ is a flat connection, and hence its holonomy defines a
cohomology class in $H^{1}(\widetilde{\mathcal{F}}/\widetilde{\mathcal{G}%
},\mathbb{R}/\mathbb{Z})$ (see \cite{flat} for a generalization of this result
to arbitrary dimensions).

\subsection{The action of automorphisms}

Let $\mathrm{Aut}^{+}P$ be the group of automorphisms preserving the
orientation on $M$, and assume that $\mathcal{G}$ is a group acting on $P$ by
elements of $\mathrm{Aut}^{+}P$. In this case we cannot choose $c\colon
C\rightarrow M$ an arbitrary map because it should be $\mathcal{G}$ invariant.
We only consider the cases $C=$ $M$ (if $\partial M=0$) and $C=\partial M$.

\subsubsection{Base manifold closed\label{SectionAutClosed}}

When $M$ is a closed manifold of dimension $2r-2$, we can take $C=M$ and
$c=\mathrm{id}_{M}$, which clearly is $\mathcal{G}$-invariant. As a
consequence of Theorem \ref{TeoremaA} we obtain the following{}

\begin{theorem}
\label{TeoremaAaut} Let $P\rightarrow M$ principal $G$-bundle with $M$ closed,
oriented and $\dim M=2r-2$, $(p,\Upsilon)\in\mathcal{I}_{\mathbb{Z}}^{r}(G)$,
$A_{0}$ a background connection\ on $P$ and a group $\mathcal{G}$ acting on
$P$ by elements of $\mathrm{Aut}^{+}P$. These data determine an action of
$\mathcal{G}$ on $\mathcal{U}_{M}=\mathcal{A}\times U(1)\rightarrow
\mathcal{A}$ by $U(1)$-bundle automorphisms such that the connection $\Xi
_{M}=\theta-2\pi i\rho_{M}$ is $\mathcal{G}$-invariant and the equivariant
curvature of $\Xi_{M}$ is $\varpi_{M}$.
\end{theorem}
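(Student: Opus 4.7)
The plan is to deduce this result directly from the general Theorem \ref{TeoremaA} by verifying that the hypotheses (a), (b), (c$_2$) of Section \ref{SectionFirst} are satisfied in the present setting, with $N=\mathcal{A}$, $C=M$, and $c=\mathrm{id}_M$. Almost all of the work has already been done; this proof is essentially an unpacking of what ``case c$_2$)'' means concretely for the space of connections.

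First I would check the standing hypotheses on $N$: the space $\mathcal{A}$ is an affine space modelled on $\Omega^1(M,\mathrm{ad}P)$, hence connected and simply connected, as required. The action of $\mathcal{G}\subset\mathrm{Aut}^+P$ on $P$ induces by pullback an action on $\mathcal{A}$, and since $\mathbb{A}$ is $\mathrm{Aut}P$-invariant (this is recorded in Section \ref{SectionConnections}), in particular it is $\mathcal{G}$-invariant. Thus (a) and (b) hold.

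Next I would verify (c$_2$): because $M$ is closed and $\mathcal{G}$ acts on $P$ by orientation-preserving automorphisms, the induced action of $\mathcal{G}$ on $M$ preserves the orientation of $M$, so taking $C=M$ with $\mathcal{G}$ acting via this induced base map and $c=\mathrm{id}_M$ gives a $\mathcal{G}$-equivariant map $c\colon C\to M$ with $\mathcal{G}$ preserving the orientation of $C$. Since $\dim M=2r-2=d$, the dimensional hypothesis of (c) is fulfilled.

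With all hypotheses in place, Theorem \ref{TeoremaA} applies verbatim. It produces the cocycle $\alpha_\phi\colon\mathcal{A}\to\mathbb{R}/\mathbb{Z}$, the lifted action $\Phi_\phi(A,u)=(\phi A,\exp(2\pi i\alpha_\phi(A))\cdot u)$ on $\mathcal{U}_M=\mathcal{A}\times U(1)$, and the $\mathcal{G}$-invariant connection $\Xi_M=\theta-2\pi i\rho_M$ whose $\mathcal{G}$-equivariant curvature equals $\varpi_M$. Here the forms $\rho_M$, $\sigma_M$ and the co-moment map $\mu_M$ reduce to the explicit expressions recorded at the beginning of Section~\ref{SectionConnections} and at the start of Section~6 (now with integration over all of $M$ rather than over a proper submanifold). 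There is no serious obstacle; the only thing to be slightly careful about is that since $\mathcal{G}$ now acts nontrivially on $M$, the integration map $\int_M$ must be set up equivariantly as in Section~\ref{equi}, which is why we require $\mathcal{G}\subset\mathrm{Aut}^+P$ rather than just $\mathrm{Aut}P$, so that $\mathrm{id}_M$ is genuinely $\mathcal{G}$-equivariant as an oriented map.
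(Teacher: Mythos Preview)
Your proof is correct and follows exactly the paper's own approach: the paper simply states that, when $M$ is closed of dimension $2r-2$, one takes $C=M$ and $c=\mathrm{id}_M$ (which is clearly $\mathcal{G}$-invariant) and obtains the theorem as a direct consequence of Theorem~\ref{TeoremaA}. Your write-up is just a more explicit verification of the hypotheses (a), (b), (c$_2$) that the paper leaves implicit.
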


Theorem \ref{TeoremaAaut} extends to arbitrary bundles the results of
\cite{Andersen1,Andersen} for trivial bundles over a surface.

\subsubsection{Base manifold with boundary\label{SectionAutBound}}

Now we assume that $M$ is a compact oriented manifold of dimension $2r-1$ with
boundary $\partial M$. We chose $C=\partial M$ and $c=\mathrm{id}_{M}$. By
applying Theorem \ref{TeoremaA} we obtain the following

\begin{theorem}
\label{TeoremaAautBound}Let $P\rightarrow M$ principal $G$-bundle with $M$
compact and oriented with boundary $\partial M$ and $\dim M=2r-1$. If a group
$\mathcal{G}$ acts on $P$ by elements of $\mathrm{Aut}^{+}P$, $(p,\Upsilon
)\in\mathcal{I}_{\mathbb{Z}}^{r}(G)$ and $A_{0}$ is a background
connection\ on $P$, these data determine an action of $\mathcal{G}$ on
$\mathcal{U}_{\partial M}=\mathcal{A}\times U(1)\rightarrow\mathcal{A}$ by
$U(1)$-bundle automorphisms such that the connection $\Xi_{\partial M}%
=\theta-2\pi i\rho_{\partial M}$ is $\mathcal{G}$-invariant and the
equivariant curvature of $\Xi_{\partial M}$ is $\varpi_{\partial M}$.

Furthermore, $S_{M}=\exp(-2\mathrm{\pi}i\cdot%
{\textstyle\int\nolimits_{M}}
Tp(A,A_{0}))$ determines $\mathcal{G}$-equivariant section of $\mathcal{U}%
_{\partial M}\rightarrow\mathcal{A}$ and we have $\nabla^{\Xi_{\partial M}%
}S_{M}=-2\mathrm{\pi}i\sigma_{M}\cdot S_{M}$.
\end{theorem}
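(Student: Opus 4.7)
The plan is to obtain Theorem \ref{TeoremaAautBound} as a direct specialization of Theorem \ref{TeoremaA} to the geometric data of case c$_3$) from Section \ref{SectionFirst}, with $N=\mathcal{A}$, $C=\partial M$, $c\colon\partial M\hookrightarrow M$ the inclusion, and $\mathbb{A}$ the tautological connection on $\mathbb{P}=P\times\mathcal{A}$. Since $\mathcal{G}$ acts by elements of $\mathrm{Aut}^{+}P$, every $\phi\in\mathcal{G}$ covers an orientation-preserving diffeomorphism of $M$ and hence of the compact boundary $\partial M$; the inclusion $c$ is therefore $\mathcal{G}$-equivariant, $\dim C=2r-2$, and all the hypotheses (a), (b), (c$_3$) of Section \ref{SectionFirst} are met.

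First I would invoke the main conclusion of Theorem \ref{TeoremaA} with these choices. It immediately furnishes the lifted $\mathcal{G}$-action on $\mathcal{U}_{\partial M}=\mathcal{A}\times U(1)$ by $U(1)$-bundle automorphisms given by $\Phi_{\phi}(A,u)=(\phi A,\exp(2\pi i\alpha_{\phi}(A))\cdot u)$, the $\mathcal{G}$-invariance of $\Xi_{\partial M}=\theta-2\pi i\rho_{\partial M}$, and the identification $\mathrm{curv}_{\mathcal{G}}(\Xi_{\partial M})=\varpi_{\partial M}$. No additional argument is needed for these three assertions.

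Next, to obtain the section $S_{M}$, I would apply the last clause of Theorem \ref{TeoremaA} with the choice $u=\mathrm{id}_{M}\colon M\rightarrow M$, which is $\mathcal{G}$-equivariant precisely because $\mathcal{G}\subset\mathrm{Aut}^{+}P$ covers orientation-preserving diffeomorphisms of $M$, and which satisfies $\partial u=c$ in the chain-theoretic sense. Using the tautological definition of $\mathbb{A}$ and $\overline{A}_{0}=\mathrm{pr}_{1}^{\ast}A_{0}$, the functional $s_{u}=-\int_{u}Tp(\mathbb{A},\overline{A}_{0})$ reduces to $s_{u}(A)=-\int_{M}Tp(A,A_{0})$, so $S_{u}=\exp(2\pi i s_{u})$ coincides with $S_{M}$. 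The same clause gives $\nabla^{\Xi_{\partial M}}S_{M}=-2\pi i\sigma_{u}\cdot S_{M}$ with $\sigma_{u}=\int_{M}p(\mathbb{F})=\sigma_{M}$, which is the final assertion.

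The only point requiring care---not really an obstacle but a verification---is checking that the conventions of case c$_3$) are in force: that $\mathcal{G}$ preserves the orientation of both $c\colon\partial M\hookrightarrow M$ and $u=\mathrm{id}_{M}$, so that the integrals $\int_{c}$ and $\int_{u}$ behave equivariantly. Both facts are automatic from $\mathcal{G}\subset\mathrm{Aut}^{+}P$. There is no new analytic or cohomological content beyond Theorem \ref{TeoremaA}; the statement is a relabeling of that general result in the specific setting of a compact base with boundary, exactly as Theorem \ref{TeoremaAaut} was the analogous specialization to the closed case of Section \ref{SectionAutClosed}.
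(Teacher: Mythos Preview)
Your proposal is correct and matches the paper's approach exactly: the paper also derives Theorem \ref{TeoremaAautBound} by direct application of Theorem \ref{TeoremaA} in the setting of case c$_3$), taking $N=\mathcal{A}$, $C=\partial M$, $c$ the inclusion, and $u=\mathrm{id}_M$ for the section. No additional argument is given or needed beyond the verification that the hypotheses of Section \ref{SectionPrequantization} hold, which you have checked.
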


\section{Riemannian metrics and diffeomorphisms}

In this Section we apply our results to the space of Riemannian metrics and
the action of diffeomorphisms. One possible approach to do it is to apply the
results of Section \ref{SectionPrequantization} to the structures on the space
of metrics defined in \cite{natconn}, \cite{anomalies} and \cite{WP}. However,
we follow a different approach: we obtain the prequantization bundle by
pulling back the bundles on the space of connections using the Levi-Civita map.

If $M$ is a oriented manifold and we take $P=FM$, the group $\mathcal{G}%
=\mathrm{Diff}^{+}M$ of orientation preserving diffeomorphisms acts on $FM$ by
automorphisms. The Levi-Civita map $\mathrm{LC}\colon\mathfrak{Met}%
M\rightarrow\mathcal{A}$ which assigns to a Riemannian metric $g$ its
Levi-Civita connection $LC(g)=\omega^{g}$ is $\mathcal{G}$-equivariant. If we
denote by $p_{k}\in I_{\mathbb{Z}}^{2k}(GL(n,\mathbb{R}))$ the $k$-th
Pontryagin polynomial and by $\Upsilon_{k}\in H^{4k}(\mathbf{B}GL(n,\mathbb{R}%
))\simeq H^{4k}(\mathbf{B}O(n))$ the $k$-th Pontryagin class, then $p_{k}$ and
$\Upsilon_{k}$ are compatible. We fix a polynomial $p\in\mathbb{Z}%
[p_{1},\ldots,p_{n/2}]\subset I_{\mathbb{Z}}^{\bullet}(GL(n,\mathbb{R}))$ of
degree $2r$ and the corresponding characteristic class $\Upsilon\in
H^{4k}(\mathbf{B}GL(n,\mathbb{R}))$.

\subsection{Closed manifolds}

Let $M$ be a compact closed manifold of dimension $4r-2$. If we fix a
background connection $A_{0}$ on $FM$, we can apply the results of Section
\ref{SectionAutClosed} and we obtain a $\mathcal{G}$-equivariant
prequantization bundle $(\mathcal{U}_{M},\Xi_{M})$ of the equivariant form
$\varpi_{M}=\sigma_{M}+\mu_{M}\in\Omega_{\mathcal{G}}^{2}(\mathcal{A})$. Using
the Levi-Civita map we obtain $\mathcal{U}_{M}^{\prime}=\mathrm{LC}^{\ast
}\mathcal{U}_{M}$, $\Xi_{M}^{\prime}=\mathrm{LC}^{\ast}\Xi_{M}$ and
$(\mathcal{U}_{M}^{\prime},\Xi_{M}^{\prime})$ is a $\mathcal{G}$-equivariant
$\mathcal{G}$-equivariant prequantization bundle of $\varpi_{M}^{\prime
}=\mathrm{LC}^{\ast}\varpi_{M}\in\Omega_{\mathcal{G}}^{2}(\mathfrak{Met}M)$.
It can be seen that $\varpi_{M}^{\prime}=\sigma_{M}^{\prime}+\mu_{M}^{\prime}$
coincide with the presymplectic structure and moment map defined in
\cite{WP}.\ We study in detail the simplest case.

\subsubsection{Dimension 2}

Let $M$ be a closed surface and $p_{1}(X)=-\frac{1}{8\pi^{2}}\mathrm{tr}%
(X^{2})$ is the first Pontryagin polynomial. The symplectic reduction of
$(\mathfrak{Met}M,\varpi)$ is studied in \cite{WP}, and the result is that
$(\mu_{M}^{\prime})^{-1}(0)=\mathfrak{Met}^{\ast}M$ is the space of metrics of
constant curvature.

If $M$ has genus $\gamma>1$ and $\mathfrak{Met}_{-1}M$ is the space of metrics
of constant curvature $-1$ we have $\mathfrak{Met}_{-1}M\subset\mu^{-1}(0)$.
The connected component with the identity $\mathrm{Diff}_{0}M$ acts freely on
$\mathfrak{Met}_{-1}M$ and the Teichm\"{u}ller space of $M$ is defined by
$\mathcal{T}(M)=\mathfrak{Met}_{-1}M/\mathrm{Diff}_{0}M$, which as it is well
known (e.g. see \cite{Tromba}), is a manifold of real dimension $6\gamma-6$.
It is proved in \cite{WP} that the form obtained from $\sigma_{M}^{^{\prime}}$
by symplectic reduction is $\underline{\sigma}_{M}^{^{\prime}}=\frac{1}%
{2\pi^{2}}\sigma_{\mathrm{WP}}$, where $\sigma_{\mathrm{WP}}$ is the
symplectic form of the Weil-Petersson metric on $\mathcal{T}(M)$. We define
the the quotient bundle $\mathcal{W}_{M}=(\mathfrak{Met}_{-1}M\times
U(1))/\mathrm{Diff}_{0}M\rightarrow\mathcal{T}(M)$. As $\mathfrak{Met}%
_{-1}M\subset(\mu_{M}^{\prime})^{-1}(0)$ the connection $\Xi_{M}^{\prime}$ is
projectable onto a connection $\vartheta_{M}$ on $\mathcal{W}_{M}$. Moreover,
as $\underline{\sigma}_{M}^{\prime}$ is of type $(1,1)$ and $\vartheta_{M}$ is
a unitary connection, we conclude (e.g. see \cite{Donaldson}) that
$\nabla^{\vartheta_{M}}$ determines a holomorphic structure on the line bundle
$\mathcal{L}_{M}\rightarrow\mathcal{T}(M)$ associated to $\mathcal{W}_{M}$.

Furthermore, the first Pontryagin class determines the action on
$\mathcal{L}_{M}$ of the elements of $\mathrm{Diff}^{+}M$ not connected with
the identity, and hence an action of $\Gamma_{M}=\mathrm{Diff}^{+}%
M/\mathrm{Diff}_{0}M$ (the mapping class group of $M$) on $\mathcal{L}_{M}$
which preserves $\nabla^{\vartheta_{M}}$. We conclude that $(\mathcal{L}%
_{M},\nabla^{\vartheta_{M}})$ is a $\Gamma_{M}$-equivariant holomorphic
Hermitian prequantization bundle for $(\mathcal{T}(M),\frac{1}{2\pi^{2}}%
\sigma_{\mathrm{WP}})$.

Similar prequantization bundles are constructed for example
in\ \cite{Guillarmou}\ and in \cite{Wolpert} by different techniques. We note
that our construction is not specific of two dimensions and can be applied to
any manifold of dimension $4r-2$.

\subsection{Manifolds with boundary}

If $M$ is a compact manifold of dimension $4r-1$ with boundary, we can apply
the results of Section \ref{SectionAutBound} and we obtain a $\mathcal{G}%
$-equivariant prequantization bundle $(\mathcal{U}_{\partial M},\Xi_{\partial
M})$ of $\varpi_{\partial M}=\sigma_{\partial M}+\mu_{\partial M}\in
\Omega_{\mathcal{G}}^{2}(\mathcal{A})$. By using the Levi-Civita map we obtain
$\mathcal{U}_{\partial M}^{\prime}=\mathrm{LC}^{\ast}\mathcal{U}_{\partial
M},\Xi_{\partial M}^{\prime}=\mathrm{LC}^{\ast}\Xi_{\partial M}$ and
$(\mathcal{U}_{\partial M}^{\prime},\Xi_{\partial M}^{\prime})$\ is a
$\mathcal{G}$-equivariant prequantization bundle of $\varpi_{\partial
M}^{\prime}=\mathrm{LC}^{\ast}\varpi_{\partial M}\in\Omega_{\mathcal{G}}%
^{2}(\mathfrak{Met}M)$. Furthermore, we have the following

\begin{theorem}
If $M$ is a compact oriented manifold with boundary $\partial M$ and we define
$s(g)=-%
{\textstyle\int\nolimits_{M}}
Tp(\omega^{g},A_{0})$, then $S(g)=\exp(-2\mathrm{\pi}i\cdot%
{\textstyle\int\nolimits_{M}}
Tp(\omega^{g},A_{0}))$ determines $\mathcal{G}$-invariant section of
$\mathcal{U}_{\partial M}^{\prime}\rightarrow\mathfrak{Met}M$.
\end{theorem}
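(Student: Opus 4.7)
The plan is to obtain this statement by pulling back, along the Levi-Civita map, the Chern-Simons section already constructed on the space of connections in Theorem \ref{TeoremaAautBound}. Applied to the frame bundle $P=FM$ with the action of $\mathcal{G}=\mathrm{Diff}^+M$ by automorphisms (which preserves the orientation of $M$), Theorem \ref{TeoremaAautBound} tells us that
\[
S_{M}(A)=\exp\!\left(-2\pi i\cdot\!\int_{M}Tp(A,A_{0})\right)
\]
is a $\mathcal{G}$-equivariant section of $\mathcal{U}_{\partial M}\to\mathcal{A}$, or equivalently that $s_{M}(A)=-\int_{M}Tp(A,A_{0})$ satisfies the cocycle identity $\alpha_{\phi}(A)=s_{M}(\phi A)-s_{M}(A)$ for every $\phi\in\mathcal{G}$.

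Next I would invoke the fact, recalled in the preceding subsection, that the Levi-Civita map $\mathrm{LC}\colon\mathfrak{Met}M\to\mathcal{A}$, $g\mapsto\omega^{g}$, is $\mathcal{G}$-equivariant, i.e.\ $\mathrm{LC}(\phi\cdot g)=\phi\cdot\omega^{g}$. Since by construction $\mathcal{U}_{\partial M}^{\prime}=\mathrm{LC}^{\ast}\mathcal{U}_{\partial M}$ and the $\mathcal{G}$-action on $\mathcal{U}_{\partial M}^{\prime}$ is the pulled-back action, the composition $S:=S_{M}\circ\mathrm{LC}$ defines a section of $\mathcal{U}_{\partial M}^{\prime}\to\mathfrak{Met}M$, and its equivariance is inherited from that of $S_{M}$: the cocycle associated to the pulled-back action is $\alpha_{\phi}^{\prime}=\alpha_{\phi}\circ\mathrm{LC}$, and
\[
\alpha_{\phi}^{\prime}(g)=\alpha_{\phi}(\omega^{g})=s_{M}(\phi\cdot\omega^{g})-s_{M}(\omega^{g})=s(\phi\cdot g)-s(g),
\]
which is exactly the equivariance condition for $S=\exp(2\pi i\cdot s)$ to be a $\mathcal{G}$-invariant section of $\mathcal{U}_{\partial M}^{\prime}$. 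The explicit formula $S(g)=\exp(-2\pi i\cdot\int_{M}Tp(\omega^{g},A_{0}))$ follows by evaluating the pullback.

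There is no real obstacle: the argument is a direct application of the naturality of the construction in Theorem \ref{TeoremaAautBound} under $\mathcal{G}$-equivariant pullbacks. The only point worth writing out carefully is the identification of the pulled-back cocycle $\alpha_{\phi}^{\prime}$ with $\alpha_{\phi}\circ\mathrm{LC}$, which is immediate from the definition of the action $\Phi_{\phi}(x,u)=(\phi x,\exp(2\pi i\alpha_{\phi}(x))u)$ on the trivialized bundle $\mathcal{U}_{\partial M}=\mathcal{A}\times U(1)$ together with equivariance of $\mathrm{LC}$.
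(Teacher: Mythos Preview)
Your proposal is correct and is precisely the argument the paper intends: the theorem is stated immediately after observing that $\mathcal{U}_{\partial M}^{\prime}=\mathrm{LC}^{\ast}\mathcal{U}_{\partial M}$ with the pulled-back action, and no separate proof is given because it follows by pulling back $S_{M}$ from Theorem \ref{TeoremaAautBound} along the $\mathcal{G}$-equivariant Levi-Civita map. The identification $\alpha_{\phi}^{\prime}=\alpha_{\phi}\circ\mathrm{LC}$ that you spell out is exactly the content of Proposition \ref{Subconjunto} applied to $g=\mathrm{LC}$, so you could shorten your write-up by citing that proposition directly.
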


Hence we have found a Chern-Simons line for Riemannian metrics.

We note that the prequantization bundle on $\mathcal{T}(M)$ is defined in
\cite{Guillarmou} by using a similar Chern-Simons line in dimension 3. They
express the surface as the boundary of a $3$-manifold and they use a
definition of the bundle similar to that in \cite{RSW} for connections. As in
the case of connections, this procedure cannot be extended to higher dimensions.

\end{document}